\documentclass{amsart}
\usepackage{amsbsy}
\usepackage{amsmath}
\usepackage{amstext}
\usepackage{amsthm}
\usepackage{amscd}
\usepackage{amssymb}
\usepackage{enumitem}
\usepackage{url}
\usepackage{tikz-cd}
\usepackage{bm}
\usepackage{graphicx}

\makeatletter
\newcommand*{\rom}[1]{\expandafter\@slowromancap\romannumeral #1@}
\makeatother

\theoremstyle{plain}
\newtheorem{thm}{Theorem}[section]
\newtheorem{cor}[thm]{Corollary}

\newtheorem{lem}[thm]{Lemma}
\newtheorem{prop}[thm]{Proposition}

\theoremstyle{definition}
\newtheorem{defn}[thm]{Definition}

\theoremstyle{remark}
\newtheorem{rem}[thm]{Remark}
\newtheorem{expl}[thm]{Example}

\numberwithin{equation}{section}

\newcommand{\modulo}[1]{{\,(\,\operatorname{mod}\,#1)}}
\newcommand\GL{{\mathrm {GL}}}
\newcommand\SL{{\mathrm {SL}}}
\newcommand\tensor{{\> \otimes \> }}
\newcommand\Gal{{\mathrm {Gal}}}

\newcommand{\Qb}{{\mathbb Q}}
\newcommand{\Rb}{{\mathbb R}}
\newcommand{\Cb}{{\mathbb C}}
\newcommand{\Eb}{{\mathbb E}}
\newcommand{\Ib}{{\mathbb I}}
\newcommand{\Tbb}{{\mathbb T}}

\newcommand{\Zb}{{\mathbb Z}}

\newcommand\Pbb{{\mathbb P}}
\newcommand\Vb{{\mathbb V}}
\newcommand\Tb{{\mathbf T}}

\newcommand{\wbb}{{\mathbf w}}
\newcommand{\xbb}{{\mathbf x}}

\newcommand{\Bc}{{\mathcal B}}

\newcommand{\Hc}{{\mathcal H}}
\newcommand{\Lc}{{\mathcal L}}
\newcommand{\Oc}{{\mathcal O}}

\newcommand{\Fc}{{\mathcal F}}

\newcommand{\mG}{{\mathfrak m}}
\newcommand{\cG}{{\mathfrak c}}

\newcommand{\al}{{\alpha}}

\newcommand{\qb}{{\bf q}}
\newcommand{\Qbb}{{\bf Q}}

\newcommand{\aG}{{\mathfrak{a}}}

\newcommand{\bG}{{\mathfrak{b}}}

\newcommand{\twobytwo}[4]{{\begin{bmatrix} #1 & #2 \\ #3 & #4
\end{bmatrix}}}
\newcommand{\cl}[1]{{\overline{#1}}}
\newcommand{\ds}{\displaystyle}

\newcommand{\hide}[1]{{}}
\newcommand{\cha}[1]{{\left\langle #1\right\rangle}}
\newcommand{\twobytwotiny}[4]{{\left[\begin{smallmatrix} #1 & #2 \\ #3 & #4
\end{smallmatrix}\right]}}

\newcommand{\rdot}{\!\cdot\!}

\newcommand{\pb}{{\mathbf p}}

\newcommand{\Ebb}{{\bm \psi}}

\theoremstyle{plain}
\newtheorem{thmL}{Theorem}

\newtheorem{conjL}{Conjecture}

\begin{document}

\title[Continued fractions and modular Symbols]{Dynamics of Continued Fractions and Distribution of Modular Symbols}
\author{Jungwon Lee}
\email{jungwon@unist.ac.kr; jungwon.lee@warwick.ac.uk}
\address{Department of Mathematical Sciences, Ulsan National Institute of Science and Technology, Ulsan, Korea}
\curraddr{Mathematics Institute, University of Warwick, Coventry CV4 7AL, UK}

\author{Hae-Sang Sun}
\email{haesang@unist.ac.kr}
\address{Department of Mathematical Sciences, Ulsan National Institute of Science and Technology, Ulsan, Korea}

\date{\today}

\begin{abstract}
We formulate a dynamical approach to the study of distribution of modular symbols, motivated by the work of Baladi-Vall\'ee. We introduce the modular partitions of continued fractions and observe that the modular symbols are special cases of modular partitions. We prove the limit Gaussian distribution and residual equidistribution for modular partitions as random variables on the set of rationals whose denominators are up to a fixed positive integer, by studying the spectral properties of transfer operator associated to the underlying dynamics. The approach leads to a few applications. We show an average version of conjectures of Mazur-Rubin on statistics for modular symbols of rational elliptic curves. We further observe that the equidistribution of mod $p$ values of modular symbols leads to mod $p$ non-vanishing result for special modular $L$-values twisted by Dirichlet characters.
\end{abstract}

\thanks{A part of this work was supported by the TJ Park Science Fellowship of POSCO TJ Park Foundation. It was also supported by Basic Science Research Program through the National Research Foundation of Korea(NRF) funded by the Ministry of Education(NRF-2017R1A2B4012408, NRF-2019R1A2C108860913, NRF-2020R1A4A1016649)}

\subjclass[2010]{11F67, 37C30}

\keywords{Mod $p$ non-vanishing of special $L$-values, Modular symbols, Mazur-Rubin conjecture, Continued fractions, Skewed Gauss map, Transfer operators}

\maketitle
\setcounter{tocdepth}{1}
\tableofcontents

\section{Introduction and statements of results}

The statistics of continued fraction has been a rich source of research. For instance, it is a longstanding conjecture that the distribution of length of continued fractions over the rational numbers follows the Gaussian distribution. 
More precisely, for a rational number $r \in (0,1)$, write $[0;m_1, m_2, \cdots, m_\ell]$ for the continued fraction expansion of $r$ where $m_1,\cdots,m_{\ell-1}$ are integers greater than $0$ and $m_\ell$ is an integer greater than $1$, and $\ell=\ell(r)$ is the length of the expansion. We consider a set 
$\Sigma_M:= \left\{ \frac{a}{M}\, \big|\,\ 1 \leq a < M, \ (a,M)=1 \right\}.$
One can regard $\Sigma_M$ as a probability space with a uniform distribution and $\ell$ as a random variable on $\Sigma_M$. The unsettled conjecture is that the variable $\ell$ follows asymptotically the Gaussian distribution as $M$ goes to infinity. 

The first prominent result goes back to Hensley \cite{hensley}. He obtained a partial result on the problem in an average setting, in other words, instead of $\Sigma_M$, he proved the conjecture for a larger probability space 
$$\Omega_M=\bigcup_{n\leq M}\Sigma_n.$$ 
Later, Baladi--Vall\'ee \cite{bv} showed the average version in full generality with an optimal error based on the dynamical analysis of Euclidean algorithm.

In this paper, we study the statistics of generalisations of the variable $\ell$, so-called modular partition functions.

\subsection{Modular partition functions}

Let ${P_i}/{Q_i}$ be the $i$-th convergent of $r$, i.e., 
$$\frac{P_i}{Q_i}=[0;m_1, \cdots, m_i],\,P_0=0,\,Q_0=1.$$ 
For $1\leq i\leq \ell$, we define $2\times 2$ integral matrices 
\begin{align*}
g_i(r):=\twobytwotiny{P_{i-1}}{P_i}{Q_{i-1}}{Q_i}\in\GL_2(\Zb) \mbox{ and }
g(r):=g_{\ell}(r).
\end{align*}
The matrices satisfy a recurrence relation $g_{i+1}(r)=g_i(r) \twobytwotiny{0}{1}{1}{m_{i+1}}.$

Let $\Gamma$ be a subgroup of $\SL_2(\Zb)$. 
For a right coset $u \in \Gamma \backslash \GL_2(\Zb)$ and a rational $r\in (0,1)$, a natural quantity to consider is $\#\{1\leq i\leq \ell\,|\, \Gamma g_i(r)\in u\}$. We observe that it is written as $\sum_{i=1}^\ell \Ib_u(\Gamma g_i(r))$ where $\Ib_u(v)=1$ if $u=v$ and $0$ otherwise. Extending it to a function $\psi$ on $\Gamma\backslash\GL_2(\Zb)$, let us define a more general quantity
$$\aG_{\psi}(r):=\sum_{i=1}^\ell\psi(\Gamma g_i(r)).$$

In order to define an $\SL_2$-version, let us introduce $${\rm j}:=\twobytwo{1}{0}{0}{-1}.$$ In this paper, we assume that 
$$[\SL_2(\Zb):\Gamma]\mbox{ is finite and }\Gamma\mbox{ is normalised by }{\rm j}.$$
For $g\in\GL_2(\Zb)$, we define
\begin{align*}
\widehat{g}:=\begin{cases}g&\mbox{ if }\det(g)=1\\
{\rm j}g&\mbox{ otherwise}
\end{cases}\mbox{ and }
\widetilde{g}:=\begin{cases}g&\mbox{ if }\det(g)=1\\
g{\rm j}&\mbox{ otherwise}
\end{cases}.
\end{align*}
For a function $\psi$ on $\Gamma\backslash\SL_2(\Zb)$, we define 
\begin{align*}
\bG_\psi(r):=\sum_{i=1}^\ell \psi(\Gamma\widehat{g_i}(r))\mbox{ and }\cG_\psi(r):=\sum_{i=1}^\ell \psi(\Gamma\widetilde{g_i}(r)).
\end{align*}
The functions ${\aG}_\psi$, ${\bG}_\psi$, and ${\cG}_\psi$ are called \emph{modular partition functions} or \emph{modular cost functions}. 

One of the main goals in the present paper is to determine the moment generating functions of random variables $\bG_\psi$ and $\cG_\psi$ on $\Omega_M$. In particular, there are two applications: the recent conjecture of Mazur--Rubin on the distribution of modular symbols and the non-vanishing modulo $p$ of special $L$-values of modular forms.

\subsection{Main results}
Let $I$ be the interval $[0,1]$. For the later applications, we study more general probability spaces. 
For a map $\varphi$ on the right cosets of $\Gamma$, denote the functions on $I \cap \Qb$ given as
\begin{align*}
\varphi: r\mapsto \varphi\big(\Gamma{g}(r)\big), \ r\mapsto \varphi\big(\Gamma\widehat{g}(r)\big),\mbox{ or }r\mapsto \varphi\big(\Gamma\widetilde{g}(r)\big)
\end{align*}
by the same symbol $\varphi$ according to the context unless any confusion arises.
For an open sub-interval $J\subseteq I$ and a non-trivial non-negative function $\varphi$, let $$\Omega_{M,\varphi,J}$$  be a probability space $\Omega_M\cap J$ with a density function $(\sum_{r\in\Omega_M\cap J}\varphi(r))^{-1}\varphi$
as long as the denominator is non-zero.  For a random variable $\mathfrak{g}$ on a probability space $X$, we denote by $\Pbb[\mathfrak{g}|X]$, $\Eb[\mathfrak{g}|X]$, and $\Vb[\mathfrak{g}|X]$ the probability, mean, and variance of $\mathfrak{g}$ on $X$, respectively.


In order to state the main results we also need the following:

\begin{defn}\label{repn:psi}
	Let $\Bbbk$ be an abelian group and $\psi: {\Gamma\backslash\SL_2(\Zb)}\rightarrow \Bbbk$. 
	\begin{enumerate}
		\item If there exists a $\Bbbk$-valued function $\beta$ on $\Gamma\backslash\SL_2(\Zb)$ such that
		$$\psi(u)=\beta(u)-\beta\Big(u\cdot \twobytwo{-m}{1}{1}{0}\Big)$$
		for all $u\in\Gamma\backslash\SL_2(\Zb)$ and integers $m\geq 1$, then $\psi$  is called a {\it $\bG$-coboundary} over $\Bbbk$, {\it associated with} $\beta$. Let $\mathcal{B}_{\bG}(\Gamma,\Bbbk)$ be the abelian group of all $\bG$-coboundaries over $\Bbbk$.
		\item If there exists a $\Bbbk$-valued function $\beta$ on $\Gamma\backslash\SL_2(\Zb)$ such that
		$$\psi(u)+\psi\Big(u \twobytwo{-n}{1}{1}{0}{\rm j}\Big)=\beta(u)-\beta\Big(u \twobytwo{-n}{1}{1}{0}\twobytwo{-m}{1}{1}{0}\Big)$$
		for all $u\in\Gamma\backslash\SL_2(\Zb)$ and integers $m,n\geq 1$, then $\psi$ is called a {\it $\cG$-coboundary}  over $\Bbbk$, {\it associated with} $\beta$. Let $\mathcal{B}_{\cG}(\Gamma,\Bbbk)$ be the abelian group of all $\cG$-coboundaries over $\Bbbk$.
	\end{enumerate}
	
\end{defn}

\begin{expl}\label{expl:cob}
	For a prime $p$ and $\Gamma=\Gamma_0(p)$, we set $$u_1=\Gamma \twobytwo{*}{*}{1}{0},\, u_2=\Gamma\twobytwo{*}{*}{0}{1}=\Gamma.$$ One can show that $\Ib_{u_1}$ is neither a $\bG$- nor $\cG$-coboundary over $\Rb$. Observe that  $\Ib_{u_1}(v\cdot \twobytwotiny{-m}{1}{1}{0})=\Ib_{u_2}(v)$	for all $v\in\Gamma\backslash\SL_2(\Zb)$ and $m\in\Zb$. Hence, $\psi=\Ib_{u_1}-\Ib_{u_2}$ is a $\bG$-coboundary associated with $\Ib_{u_1}$. As $\psi(u{\rm j})=\psi({\rm j}u)$ for all $u$, we can also show that $\psi$ is a $\cG$-boundary associated with $\Ib_{u_1}$. Hence, $\phi=\Ib_{u_1}-\frac{1}{2}\Ib_{u_2}$ is neither a $\bG$- nor $\cG$-coboundary over $\Rb$. A numerical example is presented in Figure \ref{fig:gauss}.
\end{expl}



\subsubsection{Joint Gaussian distribution}

One of the main results is  that a vector of modular partition functions follows the Gaussian distribution asymptotically.

\begin{thmL} \label{vector:far0}
Let $J$ be a non-empty open sub-interval of $(0,1)$, $\varphi$ a non-trivial non-negative function on $\Gamma\backslash\SL_2(\Zb)$; and $\mathfrak{g}=\bG$ or $\cG$.
\begin{enumerate}[leftmargin=*]
\item Let ${\bm \psi}:\Gamma\backslash\SL_2(\Zb)\rightarrow \Rb^d$ with 
${\bm \psi}=(\psi_1,\cdots,\psi_d)$. Set $\mathfrak{g}_{\bm \psi}:=(\mathfrak{g}_{\psi_1},\cdots,\mathfrak{g}_{\psi_d})$. For each ${\bm \psi}$, there exists  ${\rm H}_{\bm \psi}\in M_d(\Rb)$ (See \S \ref{sec:joint:gauss} for definition) such that:
\begin{enumerate}[leftmargin=*]
	\item  ${\rm H}_{\bm \psi}$ is non-singular if and only if $\psi_1,\cdots,\psi_d$ are $\Rb$-linearly independent modulo $\mathcal{B}_\mathfrak{g}(\Gamma,\Rb)$. 
	\item When ${\rm H}_{\bm \psi}$ is non-singular, the distribution of ${\mathfrak{g}_{\bm\psi}}$ on $\Omega_{M,\varphi,J}$ is asymptotically Gaussian as $M\rightarrow\infty$. More precisely, there exists $\mu_{\bm \psi}\in \Rb^d$ such that
for any $\xbb\in \Rb^{d}$, 
\begin{align*} 
\Pbb &\left[ \frac{{\mathfrak{g}_{\bm\psi}}- \mu_{\bm \psi} \log M}{\sqrt{\log M}}  \leq \xbb\, \Big\lvert\, \Omega_{M,\varphi,J} \right]\\
 &\phantom{blaaank}= \frac{	1}{(2 \pi)^{d/2} \sqrt{\det {\rm H}_{\bm \psi}}} \int_{\mathbf{t} \leq \xbb} \exp \left(-\frac{1}{2} \mathbf{t}^T {\rm H}_{\bm \psi}^{-1} \mathbf{t}  \right) d \mathbf{t} + O \left(\frac{1}{\sqrt{\log M}} \right)  
\end{align*}
where $\mathbf{t} \leq \xbb$ means $t_j \leq x_j$ for all $1 \leq j \leq d$ and the implicit constant is uniform in $\xbb$.
	\end{enumerate}
\item Let $d=1$. For $\psi:\Gamma\backslash\SL_2(\Zb)\rightarrow \Rb$ and $C_\psi={\rm H}_{\psi}$, there exists  $D_{\psi,\varphi,J}$ such that the variance satisfies 
		\[  \Vb[\mathfrak{g}_\psi|\Omega_{M,\varphi,J}]=C_\psi \log M +D_{\psi,\varphi,J} +O(M^{-\gamma}) \]
		for a  $\gamma >0$. In particular, $C_\psi=0$ if and only if $\psi$ is a $\mathfrak{g}$-coboundary over $\Rb$. 

\item Let $d=1$ and $k\geq 3$. There exists a polynomial $Q_{J,\varphi,k}$ of degree at most $k$ such that
		$$\Eb[\mathfrak{g}_\psi^{ k}\,|\,\Omega_{M,\varphi,J}] = Q_{J,\varphi,k}(\log M)+O((\log M)^{k}M^{-\gamma}).$$
\end{enumerate}
\end{thmL}

\begin{expl}\label{cob:examp}
For $\psi$ in Example \ref{expl:cob}, note that $\bG_\psi(r)=\sum_{i=1}^\ell \Ib_{u_1}(\Gamma\widehat{g}_i)-\Ib_{u_1}(\Gamma\widehat{g}_i\cdot\twobytwotiny{-m_{i}}{1}{1}{0})$, which is equal to $\Ib_{u_1}(\Gamma\widehat{g}_\ell)-\Ib_{u_1}(\Gamma)$. Since $\twobytwotiny{a}{b}{c}{d}\in{u_1}$ if and only if $d\equiv0\modulo{p}$, we conclude that
	$$\bG_{\psi}(r)=\Ib_{u_1}(\Gamma\widehat{g}_\ell)=\begin{cases}
		0&\mbox{ if } Q(r)\not\equiv 0\modulo{p}\\
		1&\mbox{ if }Q(r)\equiv 0\modulo{p}
	\end{cases}.$$
In particular, it does not follow the Gaussian distribution asymptotically.
\end{expl}

\hide{----
\begin{expl}\label{expl:gauss}
Let us use the notations in Example \ref{expl:cob}. Since $\Ib_{u_1}$ is not a $\bG$-coboundary over $\Rb$, we conclude that $\phi=\Ib_{u_1}-\frac{1}{2}\Ib_{u_2}$ is not a $\bG$-coboundary over $\Rb$ too. A numerical example is presented in Firgure \ref{fig:gauss}.
\end{expl}
-----}

\begin{figure}
  \includegraphics[width=8cm]{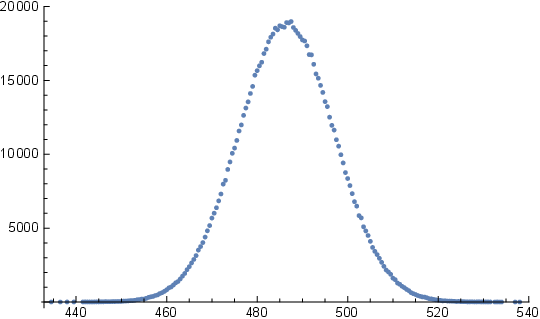}
  \caption{Distribution of $\bG_\phi$ in Example \ref{expl:cob} on the random samples of size $10^{6}$ chosen from $\Omega_{2^{10000}}$ for $\Gamma=\Gamma_0(5)$}
  \label{fig:gauss}
\end{figure}

\begin{rem}
Numerical evidences suggest that an analogue of Theorem \ref{vector:far0} for the variable $\aG$ is plausible. However, due to the fact that the relevant transfer operator for $\aG$ is not topologically mixing (See Remark \ref{not:T:adm}), the arguments in the present paper do not work for the variable $\aG$. The second-named author plans to provide an approach to deal with this problem in future work.
\end{rem}

\subsubsection{Residual equidistribution}

Another main result is the equidistribution of integer valued modular partition functions in residue classes of a fixed modulus. Let $\Omega_{M,J}:=\Omega_M\cap J$.

\begin{thmL}\label{intro:equid:mod:q}
Let $\mathfrak{g}=\bG$ or $\cG$ and $Q\geq 3$ be an integer. Then:
\begin{enumerate}[leftmargin=*]
\item  Let $J$ be a non-empty open sub-interval of $(0,1)$ and ${\bm \psi}=(\psi_1,\cdots,\psi_d):\Gamma\backslash\SL_2(\Zb)\rightarrow \Zb^d$. If $\psi_1,\cdots,\psi_d\modulo{Q}$ are $\Zb/Q\Zb$-linearly independent modulo $\mathcal{B}_\mathfrak{g}(\Gamma,\Zb/Q\Zb)$, then for all ${\bf g}\in (\Zb/Q\Zb)^{d}$ we have
\begin{align*}
	\Pbb[\,{\mathfrak{g}_{\bm\psi}}\equiv {\bf g} \modulo{Q}\,|\,\Omega_{M,J}]&=Q^{-d}+O(M^{-\delta}) 
\end{align*}
for some $\delta>0$. 
\item Let $Q$ be relatively prime to $[\SL_2(\Zb):\Gamma]$ and $\psi:\Gamma\backslash\SL_2(\Zb)\rightarrow\Zb$. If  we have for a non-empty open sub-interval $J$ that
		\begin{align*}
			\lim_{M\rightarrow\infty}\Pbb[\,\mathfrak{g}_\psi\equiv g \modulo{Q}\,|\,\Omega_{M,J}]&=Q^{-1} 
		\end{align*}
for all $g\in \Zb/Q\Zb$, then $\psi\modulo{q}$ is not a $\mathfrak{g}$-coboundary over $\Zb/q\Zb$ for each prime divisor $q$ of $Q$.
\end{enumerate}
\end{thmL}

\begin{rem}
From Example \ref{cob:examp}, we see that the random vectors $(\bG_{\Ib_u})_{u\in\Gamma\backslash\SL_2(\Zb)}$ and $(\cG_{\Ib_u})_{u\in\Gamma\backslash\SL_2(\Zb)}$ are not equidistributed modulo $Q$ for $Q\geq 3$.
\end{rem}

A specialisation $d=1$ and $\psi\equiv1$ gives us a new result that the length $\ell$ of the continued fractions on $\Omega_{M,J}$ is residually equidistributed. 

\begin{cor}
	For $g\in\Zb/Q\Zb$, we have
	$$\Pbb[\ell\equiv g\modulo{Q}|\Omega_{M,J}]=Q^{-1}+o(1).$$
\end{cor}

\subsection{Applications of main results}

First, we introduce an application of Theorem \ref{vector:far0}.

\subsubsection{Conjecture of Mazur-Rubin}\label{intro:mazur:rubin}

In order to understand the growth of Mordell-Weil ranks of a rational elliptic curve in large abelian extensions,  Mazur and Rubin \cite{mazur:rubin} described heuristically the behavior of special values of twisted modular $L$-functions by presenting the conjecture on statistics for modular symbols based on numerical computations.

Let $\Gamma_0(N)=\SL_2(\Zb)\cap \twobytwotiny{\Zb}{\Zb}{N\Zb}{\Zb}$. Let $f$ be a newform for $\Gamma_0(N)$ and of weight 2 with the Fourier coefficients $a_f(n)$. Let $\chi$ be a Dirichlet character of conductor $M$. 
We denote by $L(s,f,\chi)$ the twisted modular  $L$-function, which is given as the meromorphic continuation of the Dirichlet series with the coefficients $a_f(n)\chi(n)$. 
Let $\Qb_f$ be the field generated by the coefficients $a_f(n)$ over $\Qb$. It is known that $\Qb_f$ is  real. There are suitable periods $\Omega_f^\pm$ such that the following normalised special $L$-values are algebraic, more precisely, we have
\[
L_f(\chi):=\frac{G(\overline{\chi}) L(1,f,\chi)}{\Omega_f^\pm}\in\Qb_f(\chi)
\]
where $G(\overline{\chi})$ denotes the Gauss sum and $\pm$ corresponds to the sign $\chi(-1)=\pm 1$.

The \emph{modular symbols} are period integrals of the form
\[ 
\mG^\pm_f(r) := \frac{1}{\Omega_f^\pm}\left\{ \int_r^{i \infty} f(z)dz \pm \int_{-r}^{i \infty} f(z)dz   \right\} \in \Qb_f
\]
for $r\in\Qb$. 
We regard $\mG_f^\pm$ as a random variable. Set $\mG_E^\pm=\mG_{f_E}^\pm$ for the newform  $f_E$ corresponding to an elliptic curve $E$ over $\Qb$. The periods $\Omega_{f_E}^\pm$ can be chosen as the N\'eron periods $\Omega_E^\pm$. Mazur--Rubin  \cite{mazur} proposed:

\begin{conjL}[Mazur--Rubin] \label{mazur:rubin}
Let $E$ be an elliptic curve over $\Qb$ of conductor $N$.  Then:
\begin{enumerate}[leftmargin=*]
\item\label{m:r:conj:gauss} The random variable $\mG_E^\pm$ on $\Sigma_M$ follows the asymptotic Gaussian distribution as $M$ goes to infinity. 
\item For a divisor $d$ of $N$, there exist two constants $C_E^\pm$ and $D_{E,d}^\pm$, called the \emph{variance slope} and the \emph{variance shift}, respectively such that 
\[ \lim_{M \rightarrow \infty \atop (M,N)=d} \Vb[\mG_E^\pm|\Sigma_M]-C_E^\pm \log M= D_{E,d}^\pm. \]
\end{enumerate}
\end{conjL}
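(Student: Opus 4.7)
The plan is to realise each $\mG_f^\pm(a/M)$ as an additive cocycle along the Euclidean trajectory of $a/M$ and to analyse its statistics via a family of twisted transfer operators, sharpened so as to resolve the microcanonical constraint $q=M$ rather than merely $q\leq M$. Writing $a/M=[0;b_1,\ldots,b_n]$, the cocycle relation for the Eichler integral of $f$ gives $\mG_f^\pm(a/M)=\sum_{k=1}^n c^\pm(\gamma_k)$ for an explicit $1$-cocycle $c^\pm$ on $\mathrm{SL}_2(\Zb)$, with $\gamma_k$ the matrix of the $k$-th convergent. On the Gauss map $T(x)=\{1/x\}$ I would introduce
\[
\Lc_{s,t}[g](x)=\sum_{b\geq 1}\frac{e^{itc^\pm(\gamma_b)}}{(b+x)^{2s}}\,g\!\left(\tfrac{1}{b+x}\right)
\]
acting on a Hardy-type space of holomorphic functions on a disk. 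This operator is nuclear, its leading eigenvalue $\lambda(s,t)$ is analytic with $\lambda(1,0)=1$, and non-arithmeticity of $c^\pm$ (inherited from the fact that $f$ is a genuine cuspform) gives a spectral gap for real $t\neq 0$.

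To single out the fixed denominator $q=M$, I would work with the bivariate Dirichlet series $F(s,t)=\sum_{q\geq 1}q^{-s}\sum_{(a,q)=1}e^{itc^\pm(a/q)}$ and express it, after stripping a $\zeta(2s)^{-1}$-type factor that encodes coprimality, as a spectral object built from $(I-\Lc_{s,t})^{-1}$ applied to an explicit test function. Perron inversion then realises $\phi(M)\,\Eb[e^{it\mG_f^\pm}\,|\,\Sigma_M]$ as a vertical contour integral of $F(s,t)\,M^{s-1}$. The Gaussian limit~(\ref{m:r:conj:gauss}) follows after rescaling $t\mapsto t/\sqrt{\log M}$, shifting past the dominant pole at $s=\lambda(1,t/\sqrt{\log M})$ and reading off the residue $\exp(-\tfrac12 C_f^\pm t^2)$ with $C_f^\pm=-\partial_t^2\lambda(1,0)$, which simultaneously identifies the variance slope in part~(2); the shift $D_{f,d}^\pm$ emerges from the subleading term in the residue, whose dependence on $d=(M,N)$ is controlled by the local Euler factors of $f$ at primes dividing $N$. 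For part~(3) I would replace $e^{itc^\pm}$ by a character twist $\psi\circ c^\pm$ for each non-trivial $\psi\in\widehat{\Zb/p\Zb}$ and show that the perturbed operator has spectral radius strictly less than $1$ at $s=1$; this is where $p\nmid 2N$ and irreducibility of $\cl{\rho}_{f,p}$ enter, to exclude the algebraic coincidences that would force $\psi\circ c^\pm$ to be a coboundary mod $p$.

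The principal obstacle is precisely the passage $\Omega_M\leadsto\Sigma_M$: the Perron step demands a Dolgopyat-type resolvent bound
\[
\bigl\|(I-\Lc_{1+i\tau,t})^{-1}\bigr\|\ll |\tau|^{A}
\]
uniformly in $t$ near $0$ and $|\tau|\to\infty$, together with horizontal analytic continuation of $\lambda(s,t)$ past $\Re(s)=1$. The underlying Diophantine input is a non-concentration statement for $c^\pm$ along primitive closed geodesics on the modular surface; I would attack it via $L^2$ oscillatory-integral estimates in the Avila--Gou\"ezel--Yoccoz / Naud style, using the Hecke equivariance of $c^\pm$ as additional arithmetic leverage. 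This uniformity is the genuinely new ingredient beyond the averaged Baladi--Vall\'ee framework and is, as the paper itself concedes, the one step not delivered by transfer-operator technology in its present form.
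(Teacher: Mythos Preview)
The statement you are addressing is labeled \texttt{conjL} in the paper: it is a \emph{conjecture}, and the paper does not prove it. Immediately after stating it, the authors are explicit that what they obtain is only the averaged version on $\Omega_M=\bigcup_{n\leq M}\Sigma_n$ (their Theorem~C), not the fixed-denominator statement on $\Sigma_M$. So there is no ``paper's own proof'' to compare against.

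Your proposal is not really a proof attempt either, and you seem to know this: it is a strategy sketch, and in your final paragraph you correctly isolate the genuine obstruction, namely the passage $\Omega_M\leadsto\Sigma_M$, which requires a Dolgopyat-type resolvent bound uniform in $t$ sufficient to extract a single Dirichlet coefficient rather than a Ces\`aro sum. The paper's Perron/Tauberian machinery (Proposition~2.2, Theorem~6.5) gives only $\sum_{n\leq M}d_n(\wbb)$, and nothing in the paper upgrades this to $d_M(\wbb)$ alone. Your diagnosis of what is missing matches the paper's own concession.

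Two comments on the sketch itself. First, the paper's transfer operator lives on the skew product $I_\Gamma=[0,1]\times\Gamma_0(N)\backslash\GL_2(\Zb)$, not on $[0,1]$ alone; the cocycle $c^\pm$ is not a function of the digit $b$ but of the coset $u\twobytwotiny{0}{1}{1}{b}$, so your one-variable operator $\Lc_{s,t}$ as written is too coarse and would need to be replaced by the paper's $\Lc_{s,\wbb}$ specialised at $\wbb=it\,\wbb_f^\pm$. Second, your proposed mechanism for part~(3) --- showing that the $\psi$-twisted operator has spectral radius $<1$ at $s=1$ --- is exactly what the paper does establish (Proposition~5.5), and the role of irreducibility of $\overline{\rho}_{E,p}$ is the one you name (Proposition~3.3, ruling out $\ybb_f^\pm\equiv\mathbf{0}\pmod{\pi}$). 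But again this only yields the $\Omega_M$ statement; the $\Sigma_M$ version remains open.
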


Petridis--Risager \cite{petridis:risager} obtained the $\Omega_M$-version of the statement (1) for general cuspforms $f$ of cofinite Fuchsian groups and the statement (2) for congruence subgroup $\Gamma_0(N)$ with a square-free integer $N$. They could give an explicit formula for the constant $C_f^\pm$ as well as $D_{f,d}^\pm$ in terms of the special values of a symmetric square $L$-function of $f$. 
They further established an interval version of (1), that is, for any interval $J \subseteq {I}$, the variable $\mG_f^\pm$ on $\Omega_M \cap J$ follows the Gaussian distribution asymptotically.  Their approach is based on the sophisticated theory of non-holomorphic Eisenstein series twisted by the moments of modular symbols. Their work has been generalised to arbitrary weights by Nordentoft \cite{nord}. 

In this paper, we present another proof of the average version of Conjecture \ref{mazur:rubin} for a newform of weight 2 for $\Gamma_0(N)$ and an arbitrary $N$ as a specialisation of the result (Theorem A) on the modular partition functions. 

\begin{thmL}\label{result:symbol}
Let $f$ be a newform for $\Gamma_0(N)$ and of weight $2$. Then:
\begin{enumerate}[leftmargin=*]
	\item The random variable $\mG_f^\pm$ on $\Omega_{M,\varphi,J}$ follows asymptotic Gaussian distribution as $M$ goes to infinity. More precisely, there exist $\sigma_f^\pm$ and  $C_f^\pm>0$ such that
	\begin{align*} 
		\Pbb &\left[ \frac{\mG_f^\pm- \sigma_{f}^\pm \log M}{\sqrt{C_f^\pm\log M}}  \leq x\, \Big\lvert\, \Omega_{M,\varphi,J} \right]= \frac{1}{2 \pi } \int_{-\infty}^x \exp \left(-\frac{1}{2} t^2\right) dt + O \left(\frac{1}{\sqrt{\log M}} \right).  
	\end{align*}
	Here the implicit constant is independent of $x$.
	
	\item \label{var:shift} The \emph{variance slope} $C_f^\pm$ is independent of $\varphi$ and there exists  \emph{variance shift} $D_{f,\varphi,J}^\pm$ such that
	\[ \Vb[\mG_f^\pm|\Omega_{M,\varphi,J}]=C_f^\pm \log M + D_{f,\varphi,J}^\pm + O(M^{-\gamma}).\]
	\item Let $k\geq 3$. There exists a polynomial $Q_{J,\varphi,k}$ of degree at most $k$ such that
	$$\Eb[(\mG_f^{\pm})^{ k}\,|\,\Omega_{M,\varphi,J}] = Q_{J,\varphi,k}(\log M)+O((\log M)^{k}M^{-\gamma}).$$
\end{enumerate}
\end{thmL}

Theorem \ref{result:symbol} directly implies the result of Petridis--Risager or the average version of Conjecture \ref{mazur:rubin} with specific choices of  $\varphi$. For (1), we take $\varphi=1$. For a divisor $d$ of $N$, we define $\varphi_d \left( \twobytwotiny{\alpha}{\beta}{\gamma}{\delta}  \right)=1$ when $(\delta, N)=d$ and 0 otherwise. Note that $\varphi_d$ is well-defined on $\Gamma_0(N) \backslash \SL_2(\Zb)$. The particular choice $\varphi=\varphi_d$ yields that $\Omega_M$-version of Conjecture \ref{mazur:rubin}.(2) is a special case of our result.

Theorem \ref{result:symbol} is a specialisation of Theorem \ref{vector:far0}: More precisely, there is a function $\psi_f^\pm$ on $\Gamma_0(N)\backslash\SL_2(\Zb)$ such that it is not a coboundary over $\Rb$ and $\mG_f^\pm$ follows the distributions of $\bG_{\psi_f^+}$ and $\cG_{\psi_f^-}$ (See \S\ref{subsec:manin:trick} and \S\ref{subsec:normal:mod:sym}). 

\begin{rem}
From the specialisations $\bG_{\psi_f^+}$ and $\cG_{\psi_f^-}$, we observe that the asymptotic normality of modular symbols is originated essentially not from the modularity of $f$,  but from the dynamics of continued fractions. 
The modularity in our paper plays a role only in showing that $\psi_f^\pm$ is not a coboundary (See \S\ref{subsec:manin:trick}); it is also a crucial ingredient in calculating the mean (Diamantis {\it et al.} \cite{d:h:k:l}, Sun \cite{sun}), the variance slope and shift (Petridis--Risager \cite{petridis:risager}, Blomer {\it et al.} \cite{b.f.k.m.m.s}).
\end{rem}

\begin{rem}
One may wonder if $\bG_{\psi_f^-}$ can be used to study the modular symbols instead of $\cG_{\psi_f^-}$. In fact, the answer is negative. It is the action of ${\rm j}$ that prohibits $\mathfrak{m}_f^-$ from being expressed in terms of $\bG_{\psi_f^-}$. We refer to Remark \ref{rem:not:bG} and \ref{man:mar:error}.
\end{rem}

\begin{rem}
Bettin--Drappeau \cite{bettin:drappeau} showed the asymptotic Gaussian distribution of modular symbols for level $1$ and arbitrary higher weights (See Remark \ref{k12:N1}). We speculate that by adopting their arguments, our work can be extended to arbitrary weights.
\end{rem}

\begin{rem}
Even though computable in polynomial time (Lhote \cite{lhote}), no closed forms for the variance slope and shift for the length $\ell$ is known from the dynamical approach. It is an interesting question whether the expressions of Petridis--Risager for $C_f^\pm$ and $D_{f,\varphi}^\pm$ are hints for this open problem.
\end{rem}

In the next section, we discuss an application of Theorem \ref{intro:equid:mod:q}.

\subsubsection{Non-vanishing mod $p$ of modular $L$-values}
Non-vanishing of twisted $L$-values seems to genuinely rely on the equidistribution or density results of special algebraic cycles (See Vatsal \cite{vatsal:imc}). The first prominent example goes back to Ferrero--Washington \cite{ferrero:washington} and Washington \cite{washington} for mod $p$ non-vanishing of special Dirichlet $L$-values. A key lemma used in their proof precisely comes from $p$-adic analogue of the classical density result due to Kronecker in ergodic theory. 
One of the main motivations of the present paper is to suggest a new dynamical approach towards the study of modular $L$-values with Dirichlet twists.

We can choose a suitable period $\Omega_f^\pm$ so that the corresponding algebraic parts $L_f(\chi)$ are $p$-integral with the minimum $p$-adic valuation when, for example, the mod $p$ Galois representation $\cl{\rho}_{f,p}$ is irreducible, $p$ does not divide $2N$, and $N\geq 3$ (See $\S$\ref{sec:opt:per}). In these circumstances, the $p$-integral $L$-values are expected to be generically non-vanishing modulo $p$. One also obtains the $p$-integrality of 
${\mG}_E^\pm$ when the residual Galois representation $\overline{\rho}_{E,p}$ of $E$ is irreducible; and  $E$ has good and ordinary reduction at $p$. 

For a Dirichlet character $\chi$ of modulus $M$, we define a variant of the special $L$-value by
$$\Lambda_f(\chi):=\sum_{a \in (\Zb/M\Zb)^\times} \overline{\chi}(a) \cdot \mG_f^\pm \left( \frac{a}{M} \right).$$
This $L$-value is closely related to the special $L$-value: They can differ by an Euler-like product over the prime divisors of conductor of $\chi$. In particular, when $\chi$ is primitive, one has $L_f(\chi)=\Lambda_f(\chi).$ 
We obtain a version of the mod $p$ non-vanishing result from our dynamical setup.

\begin{thmL}\label{non:van:thm:intro}
Let $N\geq 3$ and $p\nmid 2N$. Let $f$ be an elliptic newform for $\Gamma_0(N)$ such that $\cl{\rho}_{f,p}$ is irreducible. Then we have
\[ \#\bigcup_{n\leq M}\left\{\chi\in \widehat{(\Zb/n\Zb)^\times}\,\bigg| \, \Lambda_f(\chi)\not\equiv0 \ \left(\mathrm{mod} \ {\frak p}^{1+v_{\mathfrak{p}}(\phi(n))}\right) \right\} \gg M\]
where $\frak p$ is a prime over $p$ in $\overline{\Qb}_p$ and $v_{\mathfrak{p}}(\phi(n))$ is the $\mathfrak{p}$-adic valuation of the Euler totient $\phi(n)$.
\end{thmL}

A similar quantitative mod $p$ non-vanishing of Dirichlet $L$-values is studied in Burungale--Sun \cite{burungale:sun}: Let $\lambda$ be a Dirichlet character of modulus $N$ and $(p, NM)=1$ with $(N,M)=1$. Removing the condition $p\nmid \phi(M)$, their result can be formulated as follows.
$$\#\{\chi\in \widehat{(\Zb/M\Zb)^\times}\,|\, L(0,\lambda\chi)\not\equiv0\modulo{\mathfrak{p}^{1+v_{\mathfrak{p}}(\phi(M))}}\}\gg M^{1/2-\epsilon}.$$
Let us remark that even though Theorem \ref{non:van:thm:intro} is not strong enough as the result of Burungale--Sun, it is the first result of this type for modular $L$-values with Dirichlet twists as far as we know. In fact, this non-vanishing result is a consequence of one of our main results on another Mazur--Rubin conjecture \cite{mazur} as follows.

\begin{conjL}[Mazur--Rubin]
Assume that $\overline{\rho}_{E,p}$ is irreducible and $E$ has good and ordinary reduction at $p$. Then, for any integer $a$ modulo $p$
\[ \lim_{M \rightarrow \infty} \Pbb[\, {\mG}_E^\pm \equiv a \ (\mathrm{mod} \ p)|\Sigma_M ]=\frac{1}{p} . \]
\end{conjL}

Our result on the residual equidistribution of modular symbols is:

\begin{thmL}\label{res:equid:mod}
Assume that $\overline{\rho}_{E,p}$ is irreducible and $E$ has good and ordinary reduction at $p$. Then for any $e\geq 1$ and any integer $a$ modulo $p$,
\[ \Pbb[ {\mG}_E^\pm \equiv a \ (\mathrm{mod} \ p^e)|\Omega_{M,J} ]=\frac{1}{p^e} + O(M^{-\delta})  \]
for some $\delta>0$.
\end{thmL}

This is a specialisation of Theorem \ref{intro:equid:mod:q}. More precisely, there are integer valued functions $\zeta_{E}^\pm$ on $\Gamma_1(N)\backslash\SL_2(\Zb)$ such that their reduction modulo $p^e$ are not coboundaries over $\Zb/p^e\Zb$ and $\mG_E^\pm\modulo{p^e}$ follow the distributions of $\bG_{\zeta_{E}^+}$ and $\cG_{\zeta_E^-}$ (See \S\ref{subsec:res:dist:mod}). 

\begin{rem}
Constantinescu--Nordentoft \cite{const:nord} obtained a discrete version of Petridis--Risager \cite{petridis:risager}, of which consequences include Theorem \ref{res:equid:mod}.
\end{rem}

Mazur, in a private communication, raised a question whether the Gaussian (or archimedean) and residual distributions of the modular symbols are correlated or not. 
We answer the question in Theorem \ref{mod:symb:correl} which is a consequence of more general discussion in $\S$\ref{sec:noncorr}.

\subsection{Dynamics of continued fractions: work of Baladi--Vall\'ee}

We now describe our approach. It is deeply motivated by the work of Baladi--Vall\'ee \cite{bv} on dynamics of continued fraction. Let us briefly outline their result and strategy for the proof.

Baladi--Vall\'ee established the quasi-power behavior of a moment generating function $\Eb[\exp(w\ell )|\Omega_M]$, which ensures the asymptotic Gaussian distribution of $\ell$ (See Theorem \ref{hk:hwang}). More precisely, they studied a Dirichlet series whose coefficients are essentially given by the moment generating function $\Eb[\exp(w\ell)|\Sigma_n]$:
\[ L(s,w)=\sum_{n \geq 1} \frac{c_n(w)}{n^s} , \ c_n(w)=\sum_{r \in \Sigma_n} \exp(w \ell(r))  \] 
for two complex variables $s,w$ with $\Re s>1$ and $|w|$ being sufficiently small.  The desired estimate then follows from the Tauberian argument on $L(s,w)$. To this end, they settled the analytic properties of the poles of the Dirichlet series $L(s,w)$ and uniform estimates on its growth in a vertical strip. Their crucial observation is that the weighted transfer operator plays a central role in settling the necessary properties of $L(s,w)$. 

Let $T:{I} \rightarrow {I}$ denote the Gauss map which is given by 
$T(x)=\frac{1}{x}-\left\lfloor \frac{1}{x} \right\rfloor$
for $x\neq 0$ and $T(0)=0$.
They considered the weighted transfer operator associated with the Gauss dynamical system $({I}, T)$, defined by 
$$H_{s,w}f(x):=\sum_{y:T(y)=x}  \frac{\exp(w)}{|T'(y)|^s} \cdot f(y)$$ 
for two complex variables $s$ and $w$. 
A key relation they established is that
\begin{equation} \label{keyrel:bv}
L(2s,w)=F_{s,w} (Id-H_{s,w})^{-1}1(0)
\end{equation} 
where $Id$ is the identity operator and $F_{s,w}$ is the final operator defined as $H_{s,w}$ whose summation is restricted to the indices $y(x)=\frac{1}{m+x}$ with $m \geq 2$. The crucial properties of Dirichlet series for the Tauberian argument thus directly follow from the spectral properties of transfer operator. In particular, the estimate on the growth of $L(s,w)$ in a vertical strip comes from the Dolgopyat--Baladi--Vall\'ee estimate on an operator norm of $H_{s,w}^n$, $n\geq 1$.

Our idea is to follow their framework by finding a certain dynamical system and corresponding transfer operator that naturally describe the analytic properties of Dirichlet series associated to the modular partition functions.

\begin{rem}\label{k12:N1}
Bettin--Drappeau \cite{bettin:drappeau} generalised the work of Baladi--Vall\'ee in a different direction and obtained distributional results on crucial examples of quantum modular forms. 
 \end{rem}

\subsection{Dynamical System for modular partitions}\label{sect:dyn:mod:par}

Let us describe the dynamics and transfer operators for modular partition functions. 

Let $\varphi$ be a function on the right cosets of $\Gamma$ and $J$ a non-empty open sub-interval of $I$.  To study the moment generating function of $\mathfrak{g}_{\bm \psi}$  on $\Omega_{M,\varphi,J}$, for $\wbb\in\Cb^d$ we set 
$c_{n}(\wbb):=\sum_{r \in \Sigma_{n}\cap J} \varphi(r)\exp(\wbb\cdot\mathfrak{g}_{\bm \psi}(r))$. 
Obviously,
$\Eb[\exp(\wbb\cdot\mathfrak{g}_{\bm\psi})|\Omega_{M,\varphi,J}]=\frac{\sum_{n\leq M}c_n(\wbb)}{\sum_{n\leq M}c_n({\bf 0})}.$
In order to study $c_n(\wbb)$, we consider the generating function, namely a Dirichlet series: For $s \in \mathbb{C}$, set
$L^\mathfrak{g}(s,\wbb):=\sum_{n \geq 1} \frac{c_n(\wbb)}{n^s}$. 
A strategy is to apply Tauberian arguments to $L^\mathfrak{g}(s,\wbb)$ with their behaviors in a critical strip of $\Cb$, which are expected to be consequences of dynamical analysis of the modular partition functions $\mathfrak{g}_{\bm \psi}$. For  $\psi=\wbb\cdot{\bm\psi}$, we get $\wbb\cdot\mathfrak{g}_{\bm\psi}=\mathfrak{g}_\psi$. Hence, for the dynamical analysis, we consider transfer operators with parameter $\psi$ instead of $\wbb$.

\subsubsection{Random variable $\aG$}

Let us define an operator $\Tb$ on ${I} \times \Gamma \backslash \GL_2(\Zb)$ by 
\[ \Tb(x,v) := \left( T(x), v\twobytwo{-m_1(x)}{1}{1}{0} \right)
\] 
where $m_1(x)$ denotes the first digit of continued fraction expansion of $x$. We call $\Tb$ the \emph{skewed Gauss map}. 

Let $\Psi$ be a bounded function on ${I} \times \Gamma \backslash \GL_2(\Zb)$. For $s \in \Cb$ and $\psi:\Gamma\backslash\GL_2(\Zb)\rightarrow\Cb$, we consider a weighted {\it transfer operator} associated to the dynamical system $({I} \times \Gamma \backslash \GL_2(\Zb) , \Tb)$ defined by 
\[ \Lc_{s,\psi} \Psi(x,u):= \sum_{(y,v) \in \Tb^{-1}(x,u)} \frac{\exp\left[\psi(v)\right]}{\left| T'(y)\right|^{s}} \cdot \Psi(y,v).       \]
Let $\Fc_{s,\psi}$ be the {\it final operator} defined as $\Lc_{s,\psi}$ whose summation is restricted to the indices  $(y,v)=(\frac{1}{m+x},u\twobytwotiny{0}{1}{1}{m})$ with $m \geq 2$. To study the space $\Omega_{M,\varphi,J}$, we also introduce interval operators $\mathcal{D}_{s,\psi}^J$ (See \S\ref{subsec:int:op}).
Our crucial observation is that the Dirichlet series for $\aG$ admits an alternative expression in terms of the weighted transfer operators (See Theorem \ref{keyrelation}): The quasi-inverse $(\mathcal{I}-\mathcal{L}_{s,\psi})^{-1}$ is well-defined when $(\Re(s),\Re(\wbb))$ is close to $(1,{\bf 0})$ (See Theorem \ref{bound:dolgopyat}). Then, for an interval $J\subset I$, we have
\begin{align} \label{eq:keyrel}
L^{\aG}(2s,\wbb)&= \mathcal{B}_{s,\psi}^J(1\tensor\varphi)(0,\Gamma)+\mathcal{D}_{s,\psi}^J(\mathcal{I}-\mathcal{L}_{s,\psi})^{-1} \Fc_{s,\psi}(1\tensor\varphi)(0,\Gamma)
\end{align}
for $\psi=\wbb\cdot{\bm \psi}$ and an auxiliary analytic operator $\mathcal{B}^J_{s,\psi}$.

\begin{rem}
When $J=(0,1)$, $\Gamma=\SL_2(\Zb)$, and $\varphi=1$, the expression (\ref{keyrel:bv}) can be recovered from the above expression of $L^\aG(s,\wbb)$. 
\end{rem}

\subsubsection{Random variable $\bG$}\label{intro:subsec:bG}

First of all, note that there is a natural right action of $\GL_2(\Zb)$ on $\Gamma\backslash\SL_2(\Zb)$ given by
$$(\Gamma h)\cdot g:=\Gamma\,\widehat{hg}.$$ With this right action, we consider the spaces
\begin{align*}
	I_\Gamma := {I}\times \Gamma\backslash \SL_2(\Zb)
\end{align*}
Let us define the {\it skewed Gauss map} $\widehat{\Tb}$ on $I_\Gamma$ by 
\[ 
\widehat{\Tb}(x,v):= \left( T(x), v\cdot\twobytwo{-m_1(x)}{1}{1}{0} \right).
\]

 
 Similarly as $\aG$, for a function $\varphi$ on $\Gamma\backslash\SL_2(\Zb)$ we define a weighted {\it transfer operator} $\widehat{\Lc}_{s,\varphi}$ associated to the dynamical system $(I_\Gamma, \widehat{\Tb})$,
the {\it final operator} $\widehat{\Fc}_{s,\varphi}$, and the {\it interval operator} $\widehat{\mathcal{D}}_{s,\varphi}^J$ (See \S\ref{subsec:int:op}). We are also able to obtain a version of the statement (\ref{eq:keyrel}), i.e., an analogous expression for $L^\bG(s,\wbb)$ in terms of $\widehat{\Lc}_{s,\wbb\cdot{\bm \psi}}$, $\widehat{\Fc}_{s,\wbb\cdot{\bm \psi}}$, $\widehat{\mathcal{D}}_{s,\wbb\cdot{\bm \psi}}^J$, and $\widehat{\mathcal{B}}_{s,\wbb\cdot{\bm \psi}}^J$  (See Theorem \ref{keyrelation}), which partly is a consequence of the existence of the right action.


\begin{rem}
	This type of the skew-product Gauss map has already been studied in Manin--Marcolli \cite{manin:marc} in a different context, to study the Gauss--Kuzmin operator and limiting behavior of modular symbols. We refer to Remark \ref{man:mar:error}.
\end{rem}

\subsubsection{Random variable $\cG$} 
Unlike the map $g\mapsto \widehat{g}$, the map $g\mapsto \widetilde{g}$ does not induce a right action of $\GL_2(\Zb)$ on $\Gamma\backslash\SL_2(\Zb)$.
Even though one can easily define $\cG$-analogues of $\widehat{\Tb}$ and $\widehat{\Lc}_{s,\wbb}$, say $\widetilde{\Tb}$ and  $\widetilde{\Lc}_{s,\wbb}$,  the Dirichlet series ${L}^\cG({s,\wbb})$ no longer admit a similar expression to (\ref{eq:keyrel}), especially in terms of $\widetilde{\Lc}_{s,\wbb}$, mainly  due to the absence of a suitable right action of $\GL_2(\Zb)$.

Instead, we first observe that the maps $\widehat{\Tb}^2$ and $\widetilde{\Tb}^2$ are all the same as $\Tb^2|_{I_\Gamma}$, whose second component is now the canonical right action of $\SL_2(\Zb)$ on $\Gamma\backslash\SL_2(\Zb)$. Then one can define another weighted transfer operator $\mathcal{M}_{s,\varphi}$ associated to the system $(I_\Gamma, \widetilde{\Tb}^2)$.  After defining analogues of the previous operators, namely final operator $\widetilde{\mathcal{F}}_{s,\varphi}$, interval operator $\widetilde{\mathcal{D}}_{s,\varphi}^J$, and auxiliary operator $\widetilde{\mathcal{B}}_{s,\varphi}^J$, we are able to express the Dirichlet series $L^\cG(s,\wbb)$ in terms of those operators as before (See Theorem \ref{keyrelation}).

\begin{rem}\label{rem:cob}
	Note that a function $\psi:\Gamma\backslash\SL_2(\Zb)\rightarrow \Bbbk$ is a $\bG$-coboundary over $\Bbbk$ if and only if there exists a $\Bbbk$-valued function $\beta$ on $\Gamma\backslash\SL_2(\Zb)$ such that $\psi=\beta-\beta\circ\pi_2\widehat{\Tb}$. And that $\psi$ is a $\cG$-coboundary if and only if $\psi+\psi\circ \pi_2\widetilde{\Tb}=\beta-\beta\circ\pi_2\Tb^2$ for some $\beta$.
\end{rem}

\subsection{Spectral analysis of transfer operators}

For variable $(s,\psi)$ whose real part $(\Re s, \Re \psi)$ is close to $(1,\bf 0)$, the transfer operators for $\bG$ and $\cG$ act boundedly on ${C^1}(I_\Gamma)$ and admit a spectral gap with the dominant eigenvalues $\lambda_{s,\psi}$. Then by analogues of the identity (\ref{eq:keyrel}), the poles $s$ of Dirichlet series with a fixed $\psi$ in a certain vertical strip are in a bijection with the values $s$ with $\lambda_{s,\psi}=1$. Hence the necessary analytic properties (Proposition \ref{main:dynamics}) of $L^{\mathfrak{g}}(s,\wbb)$ to apply	 the Tauberian theorem follow from the spectral properties of the transfer operators: 
For general $\Gamma$, the dominant eigenvalue of the transfer operator is simple. The topological mixing property of $\widehat{\Tb}$ ensures the uniqueness of the eigenvalue. See \S\ref{skew:system} for more details.


\subsubsection*{Structure of paper} In $\S$\ref{subsec:gp:th}, we collect several group theoretic results relevant to topological mixing of $\widehat{\Tb}$ and coboundary condition of modular partition functions. In the sections $\S$\ref{sec:mod:part} and $\S$\ref{sec:dist:mod}, a series of number theoretic results on the distribution of modular partition functions are deduced by the Tauberian argument from the behaviors of the Dirichlet series. Their proofs will be presented in the last section $\S$\ref{pf:prop:2.2}. Two transitional sections $\S$\ref{sec:transition} and $\S$\ref{sect:tr:op} are devoted to transform the number theoretic assertions to dynamical ones.  In the sections $\S$\ref{sect:spec:transf}, $\S$\ref{sect:dolgopyat}, and $\S$\ref{sec:pressure}, dynamical analyses of the corresponding transfer operators are presented.  


\subsubsection*{Acknowledgements}

We are grateful to Val\'erie Berth\'e for careful comments and patiently answering many questions regarding dynamical analysis. We thank Ashay Burungale, Seonhee Lim and Brigitte Vall\'ee for instructive discussions about the topic. We thank Viviane Baladi for several precise comments and encouragement. We are indebted to Sary Drappeau for the proof of Lemma \ref{gen:mat:lem} and Fran\c{c}ois Ledrappier, Asbj\o rn Nordentoft for pointing out errors in the earlier version. Finally we are grateful to anonymous referees for helpful suggestions.

\section{$\GL_2(\Zb)$-action on $\Gamma\backslash\SL_2(\Zb)$}\label{subsec:gp:th}

Throughout, we fix a subgroup $\Gamma$ of $\SL_2(\Zb)$ of finite index.

\subsection{Right action of $\GL_2(\Zb)$}\label{def:action:sec}

Let us set ${\rm J}:=\cha{{\rm j}}$ and ${\rm G}\Gamma:=\cha{\Gamma,{\rm j}}$ as the subgroup of $\GL_2(\Zb)$ generated by $\Gamma$ and ${\rm j}$. Both the right cosets ${\rm G}\Gamma\backslash \GL_2(\Zb)$ and the double cosets $\Gamma \backslash \GL_2(\Zb)/{\rm J}$ are identified with $\Gamma\backslash \SL_2(\Zb)$ by the maps 
\begin{align}
\Gamma\backslash\SL_2(\Zb)&\simeq {\rm G}\Gamma\backslash \GL_2(\Zb),\,u=\Gamma h\mapsto {\rm G}{u}:={\rm G}\Gamma h,\label{GG:idf}\\
\Gamma\backslash\SL_2(\Zb)&\simeq\Gamma \backslash \GL_2(\Zb)/{\rm J},\,v=\Gamma h\mapsto {v}{\rm J}:=\Gamma{h}{\rm J}\label{GJ:idf}.
\end{align}
The right action of $\GL_2(\Zb)$ on $\Gamma\backslash\SL_2(\Zb)$ discussed in \S\ref{intro:subsec:bG} actually comes from 
the natural action on ${\rm G}\Gamma\backslash\GL_2(\Zb)$ via the identification (\ref{GG:idf}).
\begin{rem}\label{rem:right:action}
On the other hand, there is no right action of $\GL_2(\Zb)$ originates from (\ref{GJ:idf}). We still can observe that the map $\Gamma h {\rm J}\mapsto \Gamma hg {\rm J}$ is a permutation on $\Gamma\backslash\GL_2(\Zb)/{\rm J}$ for a $g\in\GL_2(\Zb)$, in other words, the map $t_g:\Gamma h\mapsto \Gamma\widetilde{hg}$
is a permutation on $\Gamma\backslash\SL_2(\Zb)$. Further, for $u\in \Gamma\backslash\SL_2(\Zb)$, it can be observed that
\begin{align}\label{tilde:g12:right}
t_{g_2}(t_{g_1}(u))=u g_1g_2\mbox{ if }g_1\in\SL_2(\Zb).
\end{align}
\end{rem}

Let $u$ be a right coset of $\Gamma$ in $\SL_2(\Zb)$ and $g\in \GL_2(\Zb)$. It is easy to see that
$\widehat{g}\in u$ if and only if ${\rm G}\Gamma g={\rm G}u$ and that $\widetilde{g}\in u$ if and only $\Gamma g{\rm J}=u{\rm J}$. We extend a function $\psi$ on $\Gamma\backslash\SL_2(\Zb)$ to $\Gamma\backslash\GL_2(\Zb)$ such that
$\widehat{\psi}:=\psi\circ {\rm G}^{-1}\circ p_1$ and $\widetilde{\psi}:=\psi\circ {\rm J}^{-1}\circ p_2$
where $p_i$ are the canonical surjections $p_1:\Gamma\backslash\GL_2(\Zb)\rightarrow {\rm G}\Gamma\backslash\GL_2(\Zb)$ and $p_2:\Gamma\backslash\GL_2(\Zb)\rightarrow \Gamma\backslash\GL_2(\Zb)/{\rm J}$. It is easy to see 
$$\widehat{\psi}(\Gamma g)=\psi(\Gamma\widehat{g})\mbox{ and }\widetilde{\psi}(\Gamma g)=\psi(\Gamma\widetilde{g}).$$
Hence, from the definition of $\bG_\psi$ and $\cG_\psi$, one obtains that
\begin{align}\label{2nd:repn:bu}
\bG_\psi(r)=\sum_{i=1}^\ell \widehat{\psi}(\Gamma g_i(r)) \mbox{ and }\cG_\psi(r)&=\sum_{i=1}^\ell \widetilde{\psi}(\Gamma g_i(r)).
\end{align}

Let us first prove several preliminary results on the special linear group.

\subsection{$T$-mixing}\label{subsec:tadmiss}

In this section, a matrix of the form $\twobytwotiny{-m}{\pm 1}{1}{0}$ is called a {\it digit matrix}. The following lemma and proposition are useful when we discuss the topological properties of $\widehat{\Tb}$.

\begin{lem}\label{gen:mat:lem}
Let $\epsilon=\pm 1$ be fixed. Then:
	\begin{enumerate}[leftmargin=*]
		\item \label{item:gen:digit:mat} For any $v\in\Gamma\backslash\SL_2(\Zb)$, we have
		\begin{align}\label{gen:digit:mat}
			\Gamma\backslash\SL_2(\Zb)=\left\{v\cdot\twobytwo{-m_1}{\epsilon}{1}{0}\twobytwo{-m_2}{\epsilon}{1}{0}\cdots \twobytwo{-m_\ell}{\epsilon}{1}{0}\,\bigg|\,\ell\geq 0,m_i\in\Zb_{\geq1}\right\}
		\end{align}
		where the element for $\ell=0$ corresponds to $v$.
		\item \label{gen:id:mat} There exists $K\geq 1$ such that for each integer $k\geq K$, we can find integers $m_1,\cdots,m_k\geq 1$ such that
		$$\Gamma\cdot\twobytwo{-m_1}{\epsilon}{1}{0}\twobytwo{-m_2}{\epsilon}{1}{0}\cdots \twobytwo{-m_k}{\epsilon}{1}{0}=\Gamma.$$
	\end{enumerate}
\end{lem}

\begin{proof}
Consider first the case of $\epsilon=1$.	Let us denote the R.H.S. of (\ref{gen:digit:mat}) by $S$. Let $a=\twobytwotiny{-1}{1}{1}{0}$ and $b=\twobytwotiny{-2}{1}{1}{0}$. Let $u\in S$. As $\Gamma$ is of finite index, there exist integers $p,q\geq 1$ such that for all $u$, we get $u\cdot a^p=u$ and $u\cdot b^q=u$ 
and hence, $u\cdot a^{-1}, u\cdot b^{-1}\in S$. In sum, we conclude that for any $g\in\GL_2(\Zb)$  generated by $a$ and $b$, we have $u\cdot g\in S$. On the other hand, observe that $ab^{-1}=\twobytwotiny{1}{1}{0}{1}$, $a^{-1}b=\twobytwotiny{1}{0}{1}{1}$, $ab^{-1}a=\twobytwotiny{0}{1}{1}{0}$.
	It is well-known that $\GL_2(\Zb)$ is generated by these three elements, hence by $a$ and $b$. Since $\Gamma\backslash\SL_2(\Zb)=\Gamma\cdot\GL_2(\Zb)$, we obtain the statement (\ref{item:gen:digit:mat}).

For the second statement, observe that
$ab^{-1}a^2b^{-1}a=\twobytwotiny{1}{0}{0}{1}$  and $b^{-1}a^3b^{-1}={\rm j}.$
When applied to a coset, the first product above can be regarded as the product of $2(q-1)+4$ number of digit matrices. The second one is the product of $2(q-1)+3$ number of digit matrices. Then, there exists a number $K$ such that any integer $k\geq K$ can be written as $k=(2(q-1)+4)s+(2(q-1)+3)t$ with $s,t\geq 1$. Then, we have
 $\Gamma\cdot (ab^{-1}a^2b^{-1}a)^s (b^{-1}a^3b^{-1})^t=\Gamma\cdot {\rm j}^t=\Gamma$ 
since $\Gamma\cdot {\rm j}=\Gamma$. 

For the case of $\epsilon=-1$, set $c=\twobytwotiny{-Q}{-1}{1}{0}$  where $Q$ is an integer $>0$ such that $u\twobytwotiny{1}{-Q}{0}{1}=u$ for all $u$. Then, $uc=u\twobytwotiny{0}{-1}{1}{0}$ for each $u$.
We also set $d=\twobytwotiny{-1}{-1}{1}{0}$, and $e=\twobytwotiny{-2}{-1}{1}{0}$. Then, $de^{-1}=\twobytwotiny{1}{1}{0}{1}$. Since $\SL_2(\Zb)$ is generated by $\twobytwotiny{0}{-1}{1}{0}$ and $\twobytwotiny{1}{1}{0}{1}$, we obtain the first statement by a similar argument as above. Note that $u=uc^4$ and $u=ud^3$ for all $u$. As before, we obtain the second statement.
\end{proof} 

\begin{rem}
A version of the statement (\ref{item:gen:digit:mat}) also can be found in Manin--Marcolli \cite[Theorem 0.2.1]{manin:marc}.
\end{rem}


\begin{prop}\label{prop:cong:Tadm}
Fix $\epsilon=\pm1$. There exists an $M>0$ such that for any $u\in \Gamma\backslash\SL_2(\Zb)$ and any $\ell\geq M$, one obtains
$$\Gamma\backslash\SL_2(\Zb)=\left\{u\cdot\twobytwo{-m_1}{\epsilon}{1}{0}\cdots\twobytwo{-m_\ell}{\epsilon}{1}{0}\,\Big|\,m_i\in\Zb_{\geq 1}\right\}.$$	
\end{prop}

\begin{proof}
Let $\Gamma_0$ be the kernel of the homomorphism from $\SL_2(\Zb)$ to the permutation group on $\Gamma\backslash\SL_2(\Zb)$ induced from the right action. Then $\Gamma_0$ is normal, of finite index, and is normalised by ${\rm j}$. 
Since the statement for $\Gamma$ follows from one for $\Gamma_0$, we may assume that $\Gamma$ is normal. 

First fix representations of $\Gamma\backslash\SL_2(\Zb)$ in (\ref{gen:digit:mat}), i.e., product-representations by digit matrices; For a right coset $u$, we can find a product ${\rm m}(u)$ of digit matrices such that $u=\Gamma\cdot {\rm m}(u)$. Let $\ell(u)$ be the number of the digit matrices that consists of ${\rm m}(u)$ and $L:=\max_u \ell(u)$. We claim that for any $n\geq L+K$ and any two right cosets $u,v$ of $\Gamma$, there are $n$ digit matrices such that their product, say ${\rm m}_{uv}$, satisfies $u=v\cdot {\rm m}_{uv}$.

	First of all, we show the claim for $v=\Gamma$. Let ${\rm w}_k$ be the product of $k$ digit matrices in Lemma \ref{gen:mat:lem}.(\ref{gen:id:mat}). For any $n\geq L+K$, let us set ${\rm m}_n[u]:={\rm w}_{n-\ell(u)}{\rm m}(u)$. Observe that $u=\Gamma\cdot {\rm m}_n[u] $ and ${\rm m}_n[u]$ is the product of $n$ digit matrices. 
	
	Let $v$ be a general right coset. Let us set $u=\Gamma g$ and $v=\Gamma h$ for $g,h\in \SL_2(\Zb)$. Let $n\geq L+K$.  Then observe that $$
	\Gamma g\cdot {\rm m}_n[\Gamma g^{-1}h]=g\Gamma\cdot {\rm m}_n[\Gamma g^{-1}h]=g\Gamma g^{-1}h=\Gamma h$$ 
	as $\Gamma$ is normal in $\SL_2(\Zb)$. 	
\end{proof}

\subsection{Coboundary functions}

Let $\Bbbk$ be an abelian group. In this section, we characterize all the coboundary functions over $\Bbbk$. We fix $\beta:\Gamma\backslash\SL_2(\Zb)\rightarrow\Bbbk$ corresponding to a coboundary $\psi$, i.e., $\psi(u)=\beta(u)-\beta(u\cdot\twobytwotiny{-m}{1}{1}{0})$ if $\mathfrak{g}=\bG$ and $\psi(u)+\psi(u\twobytwotiny{-m}{1}{1}{0}{\rm j})=\beta(u)-\beta(u\twobytwotiny{-m}{1}{1}{0}\twobytwotiny{-n}{1}{1}{0})$ if $\mathfrak{g}=\cG$.
Let us denote 
$${\rm L}:=\twobytwo{1}{0}{\Zb}{1}
.$$
Note that since $\Gamma\backslash\SL_2(\Zb)$ is a finite set, the natural right action of ${\rm L}$ on the cosets $\Gamma\backslash\SL_2(\Zb)$ factors through $\twobytwotiny{1}{0}{\Zb}{1}\rightarrow \twobytwotiny{1}{0}{\Zb/Q\Zb}{1}$ for an integer $Q>1$.

We collect several properties that $\beta$ satisfies:

\begin{prop}\label{misc:beta:prop}
Let $\psi:\Gamma\backslash\SL_2(\Zb)\rightarrow \Bbbk$ be a function. 
\begin{enumerate}[leftmargin=*]
\item If $\psi$ is a $\mathfrak{g}$-coboundary, then $\beta$ is ${\rm L}$-invariant. In particular, 
	\begin{align}\label{inv:beta}
		\beta(u\cdot\twobytwotiny{m}{1}{1}{0})=\beta(u\cdot\twobytwotiny{n}{1}{1}{0})\mbox{ for all $m,n\in\Zb$.}
	\end{align}	
\item\label{psl:invj} Let $\psi$ be a $\bG$-coboundary. If $\psi(u)=\psi(- u)$ and $\psi(u{\rm j})=\psi({\rm j}u)$ for all $u$, then $\beta(u{\rm j})=\beta({\rm j}u)$ for all $u$.
\end{enumerate}

\end{prop}

\begin{proof}
We first consider (1). Consider $\mathfrak{g}=\bG$.  
We can say that $\psi(u)=\beta(u)-\beta(u\cdot\twobytwotiny{-m}{1}{1}{0})$ for all $m\in\Zb$ and $u$. Observe also that $\twobytwotiny{\Zb}{1}{1}{0}=\twobytwotiny{0}{1}{1}{0}\twobytwotiny{1}{0}{\Zb}{1}$. Then the coboundary condition on $\psi$ implies that $\beta$ is invariant under $\rm L$.	

Let $\psi$ be a $\cG$-coboundary. Similarly as above, for all $m,n\in\Zb$, we get $\psi(u)+\psi\big(u \twobytwotiny{-m}{1}{1}{0}{\rm j}\Big)=\beta(u)-\beta\big(u \twobytwotiny{-m}{1}{1}{0}\twobytwotiny{-n}{1}{1}{0}\big)$. Setting $m=0$, we get $\beta(u\twobytwotiny{1}{0}{-n}{1})=\beta(u)-\psi(u)-\psi(u\iota)$. Hence, $\beta$ is ${\rm L}$-invariant. 

Now we show (2). Setting $\alpha(u):=\beta(u)-\beta(-u)$, we get $\alpha(u)=\alpha(u\cdot\twobytwotiny{-m}{1}{1}{0})$ for all $u$ and $m$. By Proposition \ref{prop:cong:Tadm}, we know $\alpha$ is constant, in particular, $\alpha(u)=\alpha(-u)$. But $\alpha(-u)=-\alpha(u)$. Hence, $\alpha=\bf 0$, i.e., 
\begin{align}\label{beta:-beta}
\beta(u)=\beta(-u)\mbox{ for all $u$}.
\end{align} 
The given expression for $\psi$ can be written as 
$\beta(u{\rm j})-\beta({\rm j}u)=\beta(u{\rm j}\cdot\twobytwotiny{-m}{1}{1}{0})-\beta({\rm j}u\cdot\twobytwotiny{-m}{1}{1}{0}).$
Using ${\rm j}\twobytwotiny{m}{1}{1}{0}{\rm j}=-\twobytwotiny{-m}{1}{1}{0}$  and (\ref{beta:-beta}) with (\ref{inv:beta}), the last expression equals to
$\beta(u{\rm j})-\beta({\rm j}u)=\beta(u\cdot\twobytwotiny{-m}{1}{1}{0}{\rm j})-\beta({\rm j}u\cdot\twobytwotiny{-m}{1}{1}{0}).$
From this we conclude that $u\mapsto\beta(u{\rm j})-\beta({\rm j}u)$ is constant. Considering ${\rm j}u{\rm j}$, we obtain the statement.
\end{proof}

The following is crucial for determining $\mathcal{B}_\mathfrak{g}(\Gamma,\Bbbk)$.

\begin{prop}\label{trans:inv:cob}
Let  $\psi$ be a $\mathfrak{g}$-coboundary over $\Bbbk$ for an ${\rm L}$-invariant $\beta$. Then \item 	$\psi$ is zero if and only if $\beta$ is a constant. In this case, $\beta$ can be chosen as zero.

\end{prop}

\begin{proof}
First let $\mathfrak{g}=\bG$. If $\psi$ is zero, then by Proposition \ref{prop:cong:Tadm}, we can find $m_1$, $\cdots$, $m_\ell$ for a sufficiently large $\ell$ and $v$ such that $u\cdot\twobytwotiny{-m_1}{1}{1}{0}\cdots \twobytwotiny{-m_k}{1}{1}{0}=v$. Hence, $\beta$ is a constant. The converse is trivial.

Let $\mathfrak{g}=\cG$. Let $\beta$ be a constant function. Then, $\psi(u)+\psi\big(u \twobytwotiny{-m}{1}{1}{0}{\rm j}\Big)=0$ for all $m$, i.e., $\psi(u)=-\psi\big(u\twobytwotiny{-m}{-1}{1}{0}\Big)=0$ for all $m\geq 1$. By Proposition \ref{prop:cong:Tadm}, we can find $m_1$, $\cdots$, $m_\ell$ for a sufficiently large odd $\ell$ such that $u\twobytwotiny{-m_1}{-1}{1}{0}\cdots \twobytwotiny{-m_\ell}{-1}{1}{0}=u$. So, we get $\psi=0$. Conversely, suppose that $\psi$ is zero. Then, similarly as above, we can show that $\beta$ is a constant. 
\end{proof}

Now, the boundary functions are completely characterized. 

\begin{cor}\label{clssf:bndry}
\begin{enumerate}[leftmargin=*]
\item Let $U=\Gamma\backslash\SL_2(\Zb)/{\rm L}\setminus \{\Gamma\}$. There is an isomorphism  
\begin{align*}
	\mathcal{B}_\mathfrak{g}(\Gamma,\Bbbk)\simeq \Bbbk^{U}.
\end{align*}
\item \label{torsion:bndry} For an integer $Q>1$, the $Q$-torsion subgroup of $\Bc_\mathfrak{g}(\Gamma,\Rb/2\pi\Zb)$ is equal to $\Bc_\mathfrak{g}(\Gamma,2\pi Q^{-1}\Zb/2\pi\Zb)$, which is isomorphic to $\Bc_\mathfrak{g}(\Gamma,\Zb/Q\Zb)$ by the map $\psi\mapsto \frac{Q}{2\pi}\psi\modulo{Q}$.
\end{enumerate}
\end{cor}

\begin{proof}
The map $\psi\mapsto \beta|_U-\beta(\Gamma)$ is an isomorphism. For the second statement, observe that the $Q$-torsion subgroup of $(\Rb/2\pi\Zb)^U$ is just $(2\pi Q^{-1}\Zb/2\pi\Zb)^U$.
\end{proof}

\section{Modular partitions of continued fractions}\label{sec:mod:part}

In this section, we present the proofs of limit joint Gaussian distribution and the residual equidistribution of modular partition functions of $\bG$ and $\cG$ over $\Omega_{M,\varphi,J}$. 
We consider the same setting as \S\ref{sect:dyn:mod:par}. Throughout, we fix a non-empty open interval $J\subseteq I$ and a non-trivial function $\varphi:\Gamma\backslash\SL_2(\Zb)\rightarrow \Rb_{\geq 0}$ unless mentioned explicitly. Recall that ${\rm j}$ is assumed to normalise $\Gamma$. Let us set
$$\mathfrak{g}:=\bG\mbox{ or }\cG.$$ 
We define a map on $\Sigma_M$  given by 
$$r=[0;m_1,\cdots,m_\ell] \longmapsto r^*:=[0;m_\ell,\cdots,m_1].$$ For a function $\Psi$ on $I_\Gamma$, let us denote functions on $\Qb\cap{I}$
\begin{align*}
	\Psi^*: r\mapsto \Psi(r^*,\Gamma \widehat{g}(r))\mbox{ or }\Psi^*: r\mapsto \Psi(r^*,\Gamma \widetilde{g}(r))
\end{align*}
by the same symbol $\Psi^*$ according to the choice of $\mathfrak{g}$. Let us define the Dirichlet series associated to $\mathfrak{g}$ as
\begin{align*}
L_{\Psi,J}(s,\wbb)&:=\sum_{n \geq 1} \frac{d_n(\wbb)}{n^s}\mbox{ with }
d_n(\wbb)=\sum_{r \in \Sigma_{n}\cap J} \Psi^*(r) \exp(\wbb\cdot\mathfrak{g}_{\bm \psi}(r))
\end{align*} 
for $s \in \Cb$, $\wbb \in \Cb^d$, and ${\bm \psi}=(\psi_1,\cdots,\psi_d)$. 
The average of the coefficients $d_n(\wbb)$ can be studied using the following  truncated Perron's formula.

\begin{thm}[Perron's Formula, Titchmarsh {\cite[Lemma 3.12]{titchmarsh}}] \label{perron}
Suppose $a_n$ is a sequence and $A(x)$ is a non-decreasing function such that $|a_n|=O(A(n))$. Let $F(s)=\sum_{n \geq 1} \frac{a_n}{n^s}$ for $\sigma:=\Re s>\sigma_a$, the abscissa of absolute convergence of $F(s)$. Then for all $D>\sigma_a$ and $T>0$, one has
\begin{align*}
\sum_{n \leq x} a_n= \frac{1}{2\pi i} \int_{D-iT}^{D+iT} F(s) \frac{x^s}{s}ds &+ O\left(\frac{x^D |F|(D)}{T} \right) +O \left(\frac{A(2x)x\log x}{T} \right) \\ &+ O \left( A(N) \mathrm{min} \left\{ \frac{x}{T|x-N |},1 \right\} \right),
\end{align*}
where  $|F|(\sigma)=\sum_{n \geq 1} \frac{|a_n|}{n^\sigma}$ for $\sigma > \sigma_a$ and $N$ is the nearest integer to $x$. 
\end{thm}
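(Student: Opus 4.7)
The plan is to follow the classical route via Perron's discontinuous integral, interchange summation with the contour integral termwise, and then bound the resulting errors in three regimes (far from, near, and at the transition point $n = N$).

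\emph{Step 1: The discontinuous integral.} First I would establish the well known estimate
\[
\frac{1}{2\pi i}\int_{D-iT}^{D+iT}\frac{y^s}{s}\,ds
= \mathbf{1}_{y>1} + O\!\left(\frac{y^D}{T\,|\log y|}\right)
\qquad (y>0,\ y\neq 1),
\]
with the separate bound $1/2 + O(1/T)$ at $y=1$. For $y\neq 1$ this is standard: shift the contour to $\mp\infty$ (depending on whether $y>1$ or $y<1$), pick up the residue at $s=0$ when $y>1$, and estimate the horizontal tails by $\int_D^{\pm\infty}y^\sigma/T\,d\sigma$, which gives $y^D/(T\,|\log y|)$. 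The case $y$ close to $1$ is handled by a direct parametric estimate of the integrand on the segment.

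\emph{Step 2: Interchange and split.} Since $D>\sigma_a$, the series $F(s)=\sum_n a_n n^{-s}$ converges absolutely and uniformly on the vertical segment, so I can write
\[
\frac{1}{2\pi i}\int_{D-iT}^{D+iT}F(s)\,\frac{x^s}{s}\,ds
= \sum_{n\geq 1} a_n \cdot \frac{1}{2\pi i}\int_{D-iT}^{D+iT}\frac{(x/n)^s}{s}\,ds.
\]
Applying Step 1 with $y=x/n$ replaces the right-hand side by $\sum_{n\leq x}a_n$ (up to a boundary term when $n=x$) plus an error $\sum_{n\neq N} a_n\cdot O\!\left(\dfrac{(x/n)^D}{T|\log(x/n)|}\right)$ and the separate contribution from $n=N$.

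\emph{Step 3: Bounding the error, three regimes.} I would split the error sum as $n\leq x/2$ or $n\geq 2x$ (far range), $x/2<n<2x$ with $n\neq N$ (near range), and $n=N$ (the transition). In the far range $|\log(x/n)|\gg 1$, so the sum is bounded by $\frac{1}{T}\sum_n \frac{|a_n|x^D}{n^D} = \frac{x^D |F|(D)}{T}$. In the near range, using $|a_n|\ll A(2x)$ and $|\log(x/n)|\asymp |x-n|/x$, the contribution is
\[
\ll \frac{A(2x)\,x}{T}\sum_{x/2<n<2x,\ n\neq N}\frac{1}{|x-n|}\ll \frac{A(2x)\,x\log x}{T},
\]
which is the second error term. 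The term $n=N$ uses the sharper pointwise bound $\min\{x/(T|x-N|),1\}$ coming from comparing the two estimates in Step 1 (the one valid for $y$ bounded away from $1$, and the trivial bound $|a_n|\cdot 1$ when $y$ is too close to $1$), giving exactly the third error term.

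\emph{Main obstacle.} The technical point that needs the most care is the uniformity of the estimate in Step 1 as $y\to 1$, which is what forces the isolation of the index $n=N$ and the appearance of the $\min\{\,\cdot\,,1\}$ factor; here one must match the two complementary bounds and verify the transition is absorbed into the stated error. Everything else is bookkeeping: shifting contours, controlling tails by absolute convergence at abscissa $D$, and summing $1/|x-n|$ over near-by integers to produce the logarithmic loss.
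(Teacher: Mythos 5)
The paper does not prove this statement; it simply cites it from Titchmarsh, so there is no "paper's own proof" to compare against. Your proposal correctly reproduces the standard argument one finds in Titchmarsh (and in modern references such as Montgomery--Vaughan): establish the truncated discontinuous-integral estimate with the $\min\{1, 1/(T|\log y|)\}$ refinement, interchange sum and integral using absolute convergence at $\sigma = D > \sigma_a$, and then split the error sum into the far range (absorbed into $x^D|F|(D)/T$), the near range $x/2 < n < 2x$, $n \neq N$ (where $|\log(x/n)| \asymp |x-n|/x$, $|a_n| \leq A(2x)$, and $(x/n)^D \ll 1$ give the $A(2x)x\log x/T$ term), and the single index $n = N$, which produces the $A(N)\min\{x/(T|x-N|),1\}$ term by comparing the two complementary bounds on the discontinuous integral. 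Your Step 1 as literally written omits the $\min$ with $1$, but you supply exactly that correction when treating $n = N$ in Step 3, so the argument is complete; the one cosmetic caveat is that the implicit constants carry a harmless $2^D$ dependence, which is the usual convention for this lemma.
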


In order to shift the contour, we use the following properties of Dirichlet series in the vertical strip. In the next, we say a $\Rb$-valued function is a $\mathfrak{g}$-coboundary over $\Rb/2\pi\Zb$ if its composition with the surjection $\Rb\rightarrow\Rb/2\pi\Zb$ is.

\begin{prop} \label{main:dynamics}
	Let ${\bf v}\in \Rb^{d}$. There exists $0<\alpha_1\leq\frac{1}{2}$ such that for any $\widehat{\alpha}_1$ with $0<\widehat{\alpha}_1<\alpha_1$, there exists a neighborhood $W$ of $i\mathbf{v}$ in $\Cb^d$ such that:  
\begin{enumerate}[leftmargin=*]
\item	If ${\bf v}\cdot{\bm \psi}\in \Bc_\mathfrak{g}(\Gamma,\Rb/2\pi\Zb)$, then $L_{\Psi,J}(2s,\wbb)$ has a unique simple pole at $s=s(\wbb)$ in the strip $|\Re s-1| \leq \alpha_1$ for each $\wbb\in W$ with the properties:
	\begin{enumerate}[leftmargin=*]
		\item $s(\wbb)$ is analytic in $W$  and $s(i{\bf v})=1$.\label{def:s:0}
		\item $\Re s(\wbb) > 1-(\alpha_1 -\widehat{\alpha}_1)$.\label{s:strip} 
		\item The Hessian of ${s}(\wbb)$ is non-singular at $\wbb=i\mathbf{v}$ if and only if $\psi_1$, $\cdots$, $\psi_d$ are $\Rb$-linearly independent modulo $\Bc_\mathfrak{g}(\Gamma,\Rb)$.\label{s:hessian}
		\item \label{residue:LPsi}The residue $E_{\bf v}(\wbb)$ at $s(\wbb)$ is analytic on $W$ with 
		$$E_{\bf v}(i{\bf v})=\frac{e^{i\beta(\Gamma)}6|J| }{\pi^2\log2}\int_{(0,\frac{1}{2})\times\Gamma\backslash\SL_2(\Zb)}e^{-i\beta}\Psi dm$$
		where $\beta$ is associated with  ${{\bf v}}\cdot{\bm \psi}$. Here $|J|$ is the length of $J$.\label{s:residue}
	\end{enumerate}
\item If ${\bf v}\cdot{\bm \psi}\not\in \Bc_\mathfrak{g}(\Gamma,\Rb/2\pi\Zb)$, then
 $L_{\Psi,J}(2s, {\wbb})$ is analytic in the strip $|\Re s-1| \leq \alpha_1$ for all ${\wbb}\in W$.\label{cob:iybb}
\item \label{growth:strip}
	For $0<\xi<\frac{1}{5}$, there exist $0<\alpha_0\leq\alpha_1$, $0<\rho<1$, and a neighborhood $B$ of $\mathbf{0}$ in $\Rb^d$ such that for any $\Psi\in {C^1}(I_{\Gamma})$ and for all $\wbb\in \Cb^d$ with $\Re(\wbb)\in B$, 
we have $$|L_{\Psi,J}(2s,\wbb)| \ll \max(1,|\Im s|^\xi)$$ 
when $|\Re s-1| \leq \alpha_0$ with $|\Im s| \geq \frac{1}{\rho^2}$ or $\Re(s)=1\pm{\alpha}_0$ with $|\Im s|\leq \frac{1}{\rho^2}$.
\end{enumerate}
\end{prop}

\begin{rem}
Proposition \ref{main:dynamics} is a unification of Lemma 8 and 9 in Baladi-Vall\'ee \cite{bv}, which correspond to the cases (1) $d=1$, ${\bf v}= 0$ and (2) $d=1$, $\wbb=i{\bf v}$ with ${\bf v}\neq {0}$, respectively. Please refer to Remark \ref{corr:gen:thm} and \ref{sp:case:lv:1}.
\end{rem}



We postpone the proof of Proposition \ref{main:dynamics} to the end of present paper after introducing the skewed Gauss map and the associated transfer operator on ${C^1}(I_{\Gamma})$; and settling an explicit relation between the resolvent of the operator and Dirichlet series associated to the modular partition functions in $\S$\ref{sec:transition}--$\S$\ref{sec:pressure}.

The following is one of our main results which leads to both the asymptotic Gaussian behavior and residual equidistribution of the variable ${\mathfrak{g}_{\bm\psi}}$. 


\begin{prop}\label{av:sum:dnw}
	Let ${\bf v}\in \Rb^d$. There exist a constant $0<\delta<2$ and a neighborhood $W$ of $i\bf v$ in $\Cb^d$ such that for  $\Psi\in{C^1}(I_\Gamma)$ and $\wbb\in W$, we have
	\begin{align}\label{av:coeff:dnw}
		\sum_{n \leq M}d_n(\wbb)=R_{M,{\bf v}}(\wbb)+O(M^\delta)
	\end{align}
	where 
	$$R_{M, \bf v}(\wbb):=\begin{cases} \frac{E_{\bf v}(\wbb)}{s(\wbb)}M^{2s(\wbb)}&\mbox{ if }{\bf v}\cdot {\bm \psi}\in \Bc_{\mathfrak{g}}(\Gamma,\Rb/2\pi\Zb)\\
	\phantom{blaank} 0&\mbox{ otherwise}
	\end{cases}.$$
	The implicit constant and $\delta$ are independent of $\wbb$.
\end{prop}

\begin{proof}

Proposition \ref{main:dynamics} enables us to do the contour integration using Cauchy's residue theorem 
	\[ \frac{1}{2\pi i} \int_{\mathcal{U}_T(\wbb)} L_{\Psi,J}(2s,\wbb) \frac{M^{2s}}{2s} d(2s)=R_{M,{\bf v}}(\wbb) \]
	where  $\mathcal{U}_T(\wbb)$ denotes the contour with positive orientation, which is simply a rectangle with vertices $1+\alpha_0+iT$, $1-\alpha_0+iT$, $1-\alpha_0-iT$, and $1+\alpha_0-iT$. 
	
Applying the Perron formula from Theorem \ref{perron} to $L_{\Psi,J}(2s,\wbb)$ for $s$ along the vertical line $1+\alpha_0 \pm iT$, we have  
	\begin{align*}
		\sum_{n \leq M} &d_n(\wbb)= R_{M,{\bf v}}(\wbb)+ O\left(\frac{M^{2(1+\al_0)}}{T} \right) +O(A(M))\\ &+ O \left(\frac{A(2M)M\log M}{T} \right) + O \left( \int_{1-\al_0-iT}^{1-\al_0+iT} |L_{\Psi,J}(2s,\wbb)| \frac{M^{2(1-\al_0)}}{|s|} ds    \right)\\
		& + O \left( \int_{1-\al_0 \pm iT}^{1+\al_0 \pm iT} |L_{\Psi,J}(2s,\wbb)| \frac{M^{2\Re s}}{T} ds  \right) .
	\end{align*} 
	Note that the last two error terms are derived from the contour integral and each of them corresponds to the left vertical line and horizontal lines of the rectangle $\mathcal{U}_T$ respectively. Let us write it as 
	\[ \sum_{n \leq M} d_n(\wbb)= R_{M,{\bf v}}(\wbb) +\mathrm{\rom{1}+ \rom{2}+\rom{3}+\rom{4}+\rom{5}}. \]
	We choose $\widehat{\al}_0$ with $\frac{8}{59} \al_0 < \widehat{\al}_0 < \al_0$ and set $$T= M^{2\al_0+4 \widehat{\al}_0}.$$ 
	Notice that $\frac{E_{\bf v}(\wbb)}{s(\wbb)}$ is bounded in the neighborhood $W$ since $s(i{\bf v})=1$. Then, the error terms are bounded as follows.
	
	The error term $\mathrm{\rom{1}}$ is equal to $O\big(M^{2(1-2\widehat{\al}_0)}\big)$ and by Proposition \ref{main:dynamics}, the exponent of $M$ satisfies $2(1-2\widehat{\al}_0)<2$.

Let us set $|\xbb|:=\max_i|x_i|$ for $\xbb=\Re(\wbb)$. Since $\xbb\cdot{\Ebb}(r)\ll |\xbb|\ell(r)$ and $\ell(r)\ll \log n$ for $r\in\Sigma_n$, for some $c>0$ we obtain
\begin{align} \label{perron:am}
	d_n(\wbb)\ll n^{1+c|\xbb|}.
\end{align}
By (\ref{perron:am}), for any $0<\varepsilon<\frac{\widehat{\al}_0}{2}$, we can take $W$ from Proposition \ref{main:dynamics} small enough to have $c |\xbb| < \varepsilon/2$ so that $A(M)=O(M^{1+\varepsilon/2})$ and $\log M \ll M^{\varepsilon/2}$. Hence, the exponent of $M$ in the error term $\mathrm{\rom{3}}$ is equal to 
	\[ 1+ (1+c|\xbb|) + \frac{\varepsilon}{2}-(2 \al_0+ 4 \widehat{\al}_0)  \leq 2- \frac{23}{4} \widehat{\al}_0 <2 .\]
	
	Similarly the error term $\mathrm{\rom{2}}$ is equal to $O(M^{1+\varepsilon/2})$, so the exponent satisfies \begin{align*} 
		1+\frac{\varepsilon}{2}< 1+\frac{1}{4} \widehat{\al}_0 <2.  \end{align*}
	
	Also for $0<\xi<\frac{1}{5}$, we have $|L_{\Psi,J}(2s,\wbb)| \ll |\Im s|^{\xi}$ by Proposition \ref{main:dynamics}. Hence, the error term $\mathrm{\rom{4}}$ is $O(M^{2(1-\al_0)} T^\xi)$ and the exponent of $M$ is equal to 
	\begin{align*} 
		2(1-\al_0)+ (2\al_0+4 \widehat{\al}_0)\xi <2- \frac{4}{5}(2\al_0-\widehat{\al}_0) <2 .\end{align*}
	
	The last term $\mathrm{\rom{5}}$ is $O(T^{\xi-1} \cdot M^{2(1+\al_0)} (\log M)^{-1})$, hence the exponent of $M$ satisfies 
	\begin{align*} & (2\al_0+4 \widehat{\al}_0)(\xi-1)+2(1+\al_0)-\frac{\varepsilon}{2} < 2-\left(- \frac{2}{5}\al_0+ \frac{59}{20} \widehat{\al}_0 \right)< 2.
	\end{align*}
	In total, setting 
	\[ \delta= \mathrm{max} \left(  2-\frac{23}{4} \widehat{\al}_0, 1+\frac{1}{4}\widehat{\al}_0, 2- \frac{4}{5}(2\al_0-\widehat{\al}_0), 2-\left(- \frac{2}{5}\al_0+ \frac{59}{20} \widehat{\al}_0 \right)   \right), \]
	we conclude the proof.
\end{proof}

\begin{rem}
	We would like to mention that we have used a version of Perron's formula for Proposition \ref{av:sum:dnw}, which is different from the one used in Baladi--Vall\'ee \cite{bv}. 
	The current version directly leads us to get the desired estimate for the moment generating function of smaller spaces, namely $\Sigma_M(\epsilon)$, than $\Omega_M$ without using the extra smoothing process of Baladi--Vall\'ee. See Lee--Sun \cite{lee:sun} for the relevant discussion concerning the length of continued fractions.
\end{rem}

Observe that by Proposition \ref{av:sum:dnw} with $\Psi\equiv1$ and ${\bf v}={\bf 0}$,  
\begin{align}\label{dn0:expression}
	|\Omega_{M,J}|=E_{\bf 0}({\bf 0})M^2+O(M^\delta)\mbox{ with }E_{\bf 0}({\bf 0})=\frac{3|J|}{\pi^2\log 2}.
\end{align}

\subsection{Joint Gaussian distribution: Proof of Theorem \ref{vector:far0}}\label{sec:joint:gauss}

In this subsection, we obtain an explicit quasi-power behavior for moment generating function of the modular partition functions and show the limit joint Gaussian distribution.

\begin{thm} \label{gaussian:vector:plus}
There exist a neighborhood $W$ of $\bf 0$, an analytic function $B_{\varphi,J}$ on $W$, and a constant $0<\gamma<\alpha_1$ with $\alpha_1$  from Proposition \ref{main:dynamics}, such that  $B_{\varphi,J}$ is non-vanishing on $W$ and 
 \[ \Eb[\exp(\wbb\rdot {\mathfrak{g}_{\bm\psi}})|\Omega_{M,\varphi,J}]=\frac{B_{\varphi,J}(\wbb)}{B_{\varphi,J}({\bf 0})}M^{2( s(\wbb)-s(\mathbf{0}))}(1+O(M^{-\gamma})) \]
with $s(\wbb)$ from Proposition \ref{main:dynamics} (1) with ${\bf v}={\bf 0}$ and $\Psi=1\tensor\varphi$. The implicit constant and the constant $\gamma$ are independent of $\wbb\in W$.
\end{thm}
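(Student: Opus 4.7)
The plan is to write the moment generating function as a ratio
\[
\Eb[\exp(\wbb\rdot\cG)|\Omega_{M,\Psi}]=\frac{\sum_{n\leq M}d_n(\wbb)}{\sum_{n\leq M}d_n(\mathbf{0})}
\]
and apply Proposition~\ref{av:sum:dnw} to both numerator and denominator. For the \emph{smooth case} $\Psi\in\Cc^1(I_{\Gamma_0(N)})^+$, set $B_\Psi(\wbb):=E_\Psi(\wbb)/s_0(\wbb)$. Analyticity of $B_\Psi$ on a neighborhood $W$ of $\mathbf{0}$ follows from Proposition~\ref{main:dynamics}.(3) together with the analyticity of $s_0$ and the fact $s_0(\mathbf{0})=1\neq 0$. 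Non-vanishing of $B_\Psi(\mathbf{0})=\frac{1}{2\log 2}\int_{I_\Gamma}\Psi\,dx\,dv$ is immediate from $\Psi\in\Cc^1(I_\Gamma)^+$ (nontrivial and non-negative), and by continuity (shrinking $W$ if necessary) $B_\Psi$ is non-vanishing on $W$.

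The ratio computation is then routine: from Proposition~\ref{av:sum:dnw},
\[
\sum_{n\leq M}d_n(\wbb)=B_\Psi(\wbb)M^{2s_0(\wbb)}\bigl(1+O(M^{\delta-2\Re s_0(\wbb)}\Vert\Psi\Vert_{(M^\kappa)}/|B_\Psi(\wbb)|)\bigr),
\]
and analogously for the denominator with $\wbb=\mathbf{0}$ and $B_\Psi(\mathbf{0})M^2$. Dividing yields the main term $B_\Psi(\wbb)/B_\Psi(\mathbf{0})\cdot M^{2(s_0(\wbb)-s_0(\mathbf{0}))}$ times $(1+O(M^{-\gamma}))$, where any $0<\gamma<\min(2-\delta,2\Re s_0(\wbb)-\delta)$ works. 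Since $\delta<2$ and $\Re s_0(\wbb)>1-(\alpha_0-\widehat{\alpha}_0)$, shrinking $W$ around $\mathbf{0}$ ensures $\gamma>0$ can be chosen uniformly in $\wbb$; shrinking $W$ further, the ratio $\Vert\Psi\Vert_{(M^\kappa)}/|B_\Psi(\wbb)|$ stays bounded by a constant times $\Vert\Psi\Vert_{(M^\kappa)}/|B_\Psi(\mathbf{0})|$, and by rescaling $\Psi$ (which cancels in the ratio) one sees the implicit constant may be taken uniform in $\Psi$ as well.

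For the \emph{indicator case} $\Psi=\Ib_J\tensor\varphi$, since $\Ib_J\notin\Cc^1$, approximate it from above and below by $\Cc^1$ functions $\Ib_{J,\epsilon}^\pm$ with $\Ib_{J,\epsilon}^-\leq \Ib_J\leq \Ib_{J,\epsilon}^+$, $\Vert\Ib_{J,\epsilon}^+-\Ib_{J,\epsilon}^-\Vert_{L^1}=O(\epsilon)$ and $\Vert\partial\Ib_{J,\epsilon}^\pm\Vert_0=O(\epsilon^{-1})$. Set $\Psi_\epsilon^\pm:=\Ib_{J,\epsilon}^\pm\tensor\varphi$. Decompose
\[
\sum_{n\leq M}d_n^\Psi(\wbb)=\sum_{n\leq M}d_n^{\Psi_\epsilon^-}(\wbb)+\sum_{n\leq M}d_n^{\Psi-\Psi_\epsilon^-}(\wbb),
\]
and majorise the remainder by $\sum_{n\leq M}d_n^{\Psi_\epsilon^+-\Psi_\epsilon^-}(\Re\wbb)$, since $|\Psi-\Psi_\epsilon^-|\leq \Psi_\epsilon^+-\Psi_\epsilon^-$ pointwise. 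Apply Proposition~\ref{av:sum:dnw} to the smooth functions $\Psi_\epsilon^\pm$, using the Lipschitz estimate $|E_{\Psi_\epsilon^+}(\wbb)-E_{\Psi_\epsilon^-}(\wbb)|\ll\Vert\Psi_\epsilon^+-\Psi_\epsilon^-\Vert_{L^1}=O(\epsilon)$ from Proposition~\ref{main:dynamics}.(3) to show the main contribution is $\epsilon$-close to a well-defined limit. The smoothing contributes $O(\epsilon M^{2\Re s_0(\wbb)})$ to the main term and $O(M^\delta\epsilon^{-1})$ to the error. Balancing by choosing $\epsilon=M^{-\eta}$ with $\eta$ a small fixed positive number less than $\min(2\Re s_0(\wbb)-\delta,\,\kappa)/2$ yields a power saving, after which the denominator is handled identically at $\wbb=\mathbf{0}$. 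Define $B_\Psi(\wbb):=\lim_{\epsilon\to 0}E_{\Psi_\epsilon^-}(\wbb)/s_0(\wbb)$, which exists by the Lipschitz bound and inherits analyticity and non-vanishing from the smooth case ($B_\Psi(\mathbf{0})=\frac{1}{2\log 2}\cdot|J|\cdot\sum_u\varphi(u)/[\GL_2(\Zb):\Gamma_0(N)]>0$ as soon as $\varphi$ is not identically zero and $J$ has positive length).

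The main obstacle is the approximation step: one must verify that the same analytic function $B_\Psi$ and the same exponent $s_0$ govern both the smooth approximation and the indicator, and that the $O(M^{-\gamma})$ saving survives the $\epsilon$-optimisation uniformly in $\wbb\in W$. This hinges crucially on the $\Psi$-independence of the constants $\alpha_0,\widehat{\alpha}_0,\delta,\kappa$ in Proposition~\ref{av:sum:dnw} (Remark~\ref{indep:Psi:rem}) and on the $L^1$-continuity of the residue $E_\Psi$ (Proposition~\ref{main:dynamics}.(3)); together these allow the limit $\epsilon\to 0$ to be taken without losing quantitative control.
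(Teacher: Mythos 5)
Your proposal is correct and follows essentially the same route as the paper's proof: apply Proposition \ref{av:sum:dnw} directly for smooth $\Psi$, and for the indicator case smooth $\Ib_J$, invoke the $L^1$-Lipschitz continuity of the residue $E_\Psi$ from Proposition \ref{main:dynamics}.(3), and balance the smoothing parameter against the error term. The only differences are cosmetic: you use a two-sided sandwich $\Ib_{J,\epsilon}^-\le\Ib_J\le\Ib_{J,\epsilon}^+$ and majorise the remainder by a third Dirichlet series evaluated at $\Re\wbb$, whereas the paper takes a single smooth approximant $h_{Q,J}$ and bounds $\sum_{r\in\Omega_M}(\Psi-\Psi_{Q,J,\varphi})(r)\exp(\wbb\rdot\cG(r))$ directly by counting the rationals $r\in\Omega_M$ lying in the small support of $\Ib_J-h_{Q,J}$; and you define $B_\Psi$ as a limit of the smooth residues while the paper defines $E_\Psi(\wbb)$ for $\Psi\in L^1$ outright via the explicit integral formula established in the proof of Proposition \ref{prop:residue:LPsi} (the two definitions agree by the same Lipschitz bound you invoke). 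Your error bookkeeping is a bit looser than the paper's (you drop the $M^{-\kappa}$ saving hidden in $\Vert\Psi_\epsilon\Vert_{(M^\kappa)}$), but the conclusion survives since you choose $\eta$ small. One caveat, present in the paper's exposition as well as yours: the claimed independence of the implicit constant from $\Psi$ requires some normalisation (as you note, the error in Proposition \ref{av:sum:dnw} scales with $\Vert\Psi\Vert_{(M^\kappa)}$ while the main term scales with $E_\Psi(\mathbf{0})$, and these need not be comparable across all of $\Cc^1(I_\Gamma)^+$); the paper only explicitly claims $W$ is independent of $\Psi$, so your ``rescaling'' remark is a fair gloss on an issue the paper itself leaves a little informal.
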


\begin{proof}
Setting $B_{\varphi,J}(\wbb):=E_{\bf 0}(\wbb)/s(\wbb)$, from Proposition \ref{av:sum:dnw} with ${\bf v}={\bf 0}$ and $\Psi=1\tensor\varphi$, we obtain the proof of theorem.
\end{proof}

The following probabilistic result ensures that the asymptotic normality of a sequence of random vectors comes from the quasi-power behavior of their moment generating functions. 

\begin{thm}[Heuberger--Kropf \cite{hk}, Hwang \cite{bv}] \label{hk:hwang}
Suppose that the moment generating function for a sequence $\mathbf{X}_N$ of $m$-dimensional real random vectors on spaces $\Xi_N$ satisfies the quasi-power expression
\[ \Eb[\exp(\wbb\rdot \mathbf{X}_N)\,|\,\Xi_N]=\exp(\beta_N U(\wbb)+V(\wbb))(1+O(\kappa_N^{-1}))  \]
with $\beta_N, \kappa_N \rightarrow \infty$ as $N \rightarrow \infty$, and $U(\wbb), V(\wbb)$ analytic for $\wbb=(w_i) \in \Cb^m$ with $|\wbb|$ being sufficiently small. Assume that the Hessian $\mathbf{H}_U(\mathbf{0})$ of $U$ at $\bf 0$ is non-singular. Then:
\begin{enumerate}[leftmargin=*]
\item The distribution of $\mathbf{X}_N$ is asymptotically normal with the speed of convergence $O(\kappa_N^{-1}+\beta_N^{-1/2})$. In other words, for any $\xbb\in \Rb^m$
\begin{align*} 
\Pbb &\left[ \frac{{\bf X}_N- \nabla U(\mathbf{0}) \beta_N}{\sqrt{\beta_N}}  \leq \xbb\, \Big\lvert\, \Xi_N \right]\\
 &= \frac{1}{(2 \pi)^{m/2} \sqrt{\det\mathbf{H}_U(\mathbf{0})}} \int_{\mathbf{t} \leq \xbb} \exp \left(-\frac{1}{2} \mathbf{t}^T\mathbf{H}_U(\mathbf{0})^{-1}\mathbf{t}  \right) d \mathbf{t} + O \left(\frac{1}{\kappa_N}+\frac{1}{\sqrt{\beta_N}} \right)  
\end{align*}
where $\mathbf{t} \leq \xbb$ means $t_j \leq x_j$ for all $1 \leq j \leq k$ and the $O$-term is uniform in $\xbb$.
\item Let $m=1$. The moments of $X_N$ satisfy
 \begin{align*}
       \Eb[X_N\,|\,\Xi_N] &= \beta_N U'(0)+V'(0)+O(\kappa_N^{-1}), \\ \Vb[X_N\,|\,\Xi_N]  &= \beta_N U''(0)+V''(0)+O(\kappa_N^{-1})\\
\Eb[X_N^k\,|\,\Xi_N] &= P_k(\beta_N)+O(\beta_N^{k}\kappa_N^{-1})
\end{align*}    
for some polynomials $P_k$ of degree at most $k
\geq 3$.
\end{enumerate}
\end{thm}

We are ready to give:

\begin{proof}[Proof of Theorem \ref{vector:far0}]
Let $U(\wbb)=2(s(\wbb)-s(\mathbf{0}))$ and $V(\wbb)=\log \frac{B_{\varphi,J}(\wbb)}{B_{\varphi,J}(\mathbf{0})}$ with $s$ and $B_{\varphi,J}$ from Theorem \ref{gaussian:vector:plus}. By Proposition \ref{main:dynamics} with ${\bf v}={\bf 0}$, both $U$ and $V$ are independent of $M$, analytic for sufficiently small $\wbb$, the Hessian of $U$ at $\bf 0$ is equal to one of $s(\wbb)$, and it is non-singular if and only if $\psi_i$ are $\Rb$-linearly independent modulo $\Bc_{\mathfrak{g}}(\Gamma,\Rb)$. Setting ${\rm H}_{\bm \psi}$ as the Hessian of $U$, Theorem \ref{hk:hwang} enables us to finish the proof.
\end{proof}

\subsection{Residual equidistribution: Proof of Theorem \ref{intro:equid:mod:q}}\label{sec:res:equid}

In this subsection, we give a proof for the residual equidistribution of modular partition functions. First we need:

\begin{thm} \label{modp:equidist:vector}
Let ${\bf v}\in \Rb^d$ and ${\bm \psi}:\Gamma\backslash\SL_2(\Zb)\rightarrow \Rb^d$. There exists $\gamma_1>0$ such that  
\[ \Eb[\,\exp({i{\bf v}}\cdot {\mathfrak{g}_{\bm\psi}})|\Omega_{M,J}]=R({\bf v})+O( M^{-\gamma_1})\]
with  
$$R({\bf v})=\begin{cases}
\ds\frac{\sum_v\exp[i(\beta(\Gamma)-\beta(v))]}{[\SL_2(\Zb):\Gamma]}&\mbox{ if }{\bf v}\cdot{\bm \psi}\in \Bc_{\mathfrak{g}}(\Gamma,\Rb/2\pi\Zb)\\
\phantom{blaaaaank}0&\mbox{ otherwise}
\end{cases}.$$ 
Here in the first case, ${\bf v}\cdot{\bm \psi}$ is associated with $\beta$.
\end{thm}

\begin{proof}
Note that if ${\bf v}\cdot {\bm\psi}\in \Bc_{\mathfrak{g}}(\Gamma,\Rb/2\pi\Zb)$, then by Proposition \ref{main:dynamics} with $\Psi\equiv 1$, $$E_{\bf v}(i{\bf v})=\frac{3|J|\sum_v\exp[i(\beta(\Gamma)-\beta(v))]}{[\SL_2(\Zb):\Gamma]\pi^2\log 2}.$$
Setting $R({\bf v})=\frac{E_{\bf v}(i{\bf v})}{E_{\bf 0}({\bf 0})}$, by Proposition \ref{av:sum:dnw} and (\ref{dn0:expression}), we obtain the statement with $\gamma_1=2-\delta$. 
\end{proof}

We are ready to give:

\begin{proof}[Proof of Theorem \ref{intro:equid:mod:q}]

Recall that ${\mathfrak{g}_{\bm\psi}}(r)\in \Zb^d$ as $r$ varies over $\Omega_M$.  For ${\bf g}\in (\Zb/Q\Zb)^d$, it easy to see
\begin{align*}  
\Pbb&[ {\mathfrak{g}_{\bm\psi}}\equiv {\bf g}\modulo{Q}|\Omega_{M,J}] 
= \frac{1}{Q^d} \sum_{{\bf s}\in (\Zb/Q\Zb)^d} e^{- \frac{2 \pi i}{Q} {\bf s}\cdot {\bf g}} \cdot \Eb \left[\exp \left( \frac{2 \pi i}{Q} {\bf s}\rdot {\mathfrak{g}_{\bm\psi}} \right)\Big\lvert \Omega_{M,J} \right] .
\end{align*}
We then split the summation into two parts: ${\bf s}={\bf 0}$ and ${\bf s}\neq {\bf 0}$. The term corresponding to ${\bf s}={\bf 0}$ is the main term which is $Q^{-d}$. For the sum over ${\bf s\not=0}$, we assert that 
\begin{align}\label{not:cob:s:psi}
\frac{2\pi}{Q}{\bf s\cdot\bm\psi}\not\in \Bc_{\mathfrak{g}}(\Gamma,\Rb/2\pi\Zb).
\end{align}
Note that the condition is independent of any choice of a lift of ${\bf s}$ to $\Zb^d$.
Assume the contrary of (\ref{not:cob:s:psi}). Then, by Corollary \ref{clssf:bndry}.(\ref{torsion:bndry}) we get $\frac{2\pi}{Q}{\bf s\cdot\bm\psi}\in \Bc_{\mathfrak{g}}(\Gamma,2\pi Q^{-1}\Zb/2\pi\Zb)$ and hence ${\bf s\cdot\bm\psi}\in \Bc_{\mathfrak{g}}(\Gamma,\Zb/Q\Zb)$. This contradicts to the condition on ${\bm \psi}$. Hence, from Theorem \ref{modp:equidist:vector}, we obtain
\[ \Eb \left[\exp\left(\frac{2\pi i{\bf s}}{Q}\rdot {\mathfrak{g}_{\bm\psi}}\right)\bigg|\Omega_{M,J}\right]\ll M^{-\gamma_1}. \]
This gives the proof of the first statement.

For the second one, suppose that $\psi$ is a $\mathfrak{g}$-coboundary over $\Zb/q\Zb$ for a prime $q\mid Q$, associated with $\beta$. 
First, we have
\begin{align*}
\Pbb&[ {\mathfrak{g}_{\psi}}\equiv {a}\modulo{q}|\Omega_{M,J}]= 
\frac{1}{q}\sum_{t\in\Zb/q\Zb}e^{-2\pi iat/q}\Eb \left[\exp \left( \frac{2 \pi i}{q} {t}{\mathfrak{g}_{\psi}} \right)\Big\lvert \Omega_{M,J} \right]
\end{align*}
Note that $\frac{2\pi it}{q}\psi$ is also a coboundary associated with $\frac{2\pi i t}{q}\beta$. From Theorem \ref{modp:equidist:vector}, the last expression equals 
\begin{align*}
	\frac{1}{q[\SL_2(\Zb):\Gamma]}\sum_{t}e^{-2\pi iat/q}\sum_{v\in \Gamma\backslash\SL_2(\Zb)}\exp\left[\frac{2\pi it}{q}(\beta(\Gamma)-\beta(v))\right]+o(1).
\end{align*}
This equals $c_a[\SL_2(\Zb):\Gamma]^{-1}+o(1)$ where $c_a=\#\{v\,|\,\beta(v)\equiv\beta(\Gamma)-a\modulo{q}\}$. On the other hand, we get 
$$\Pbb[ {\mathfrak{g}_{\psi}}\equiv {a}\modulo{q}|\Omega_{M,J}]=\sum_{g\equiv a (q)\atop g\in \Zb/Q\Zb}\Pbb[ {\mathfrak{g}_{\psi}}\equiv {g}\modulo{Q}|\Omega_{M,J}]=\frac{Q}{q}\cdot\frac{1}{Q}+o(1)$$ 
for each $a$. Hence, $c_a[\SL_2(\Zb):\Gamma]^{-1}$ are all the same as $q^{-1}$. This is possible only when $q$ is a divisor of $[\SL_2(\Zb):\Gamma]$, which is a contradiction. Hence, we obtain the statement.
\end{proof}

\subsection{Weak correlation between archimedean and residual distributions}\label{sec:noncorr}

In this subsection, we present a result that the Gaussian distribution and residual distribution of modular partition functions are weakly correlated, i.e., non-correlated asymptotically. 

Let $\psi:\Gamma\backslash\SL_2(\Zb)\rightarrow\Zb$ and $Q>1$ be an integer.  For a ${{g}}\in \Zb/Q\Zb$, let $\Omega_{M,J}^{{{g}}}$ be the probability space $\{r\in\Omega_{M,J}\,|\, {\mathfrak{g}_{\psi}}(r)\equiv {{g}}\modulo{Q}\}$ with the uniform density. Let $\underline{\mathfrak{g}}_\psi$ be the normalisation of $\mathfrak{g}_\psi$, i.e., $$\underline{\mathfrak{g}}_\psi:=\frac{\mathfrak{g}_\psi-\mu_\psi\log M}{\sqrt{C_\psi\log M}}$$ 
where $\mu_\psi$ and $C_\psi$ are given in Theorem \ref{vector:far0}. The following result shows that the two distributions of $\mathfrak{g}_\psi$  on $\Omega_{M,J}$ are asymptotically non-correlated for a residual non-coboundary $\psi$:

\begin{thm}\label{weak:correl:cf}
Assume that $\psi\modulo{q}\not\in\Bc_{\mathfrak{q}}(\Gamma,\Zb/q\Zb)$ for each prime $q\mid Q$.	
For each $x\in\Rb$, as $M\rightarrow\infty$, we get:
\begin{enumerate}[leftmargin=*]
\item $\Pbb[ {\underline{\mathfrak{g}}_{\psi}}  \leq x\, \lvert\, \Omega_{M,J}^g]=\Pbb[ {\underline{\mathfrak{g}}_{\psi}}  \leq x\, \lvert\, \Omega_{M,J}]+o(1)$. 
\item 
$\Pbb[ {\underline{\mathfrak{g}}_{\psi}}  \leq x, \mathfrak{g}_\psi\equiv g (Q)\, \lvert\, \Omega_{M,J}]=	\Pbb[ {\underline{\mathfrak{g}}_{\psi}}  \leq x\, \lvert\, \Omega_{M,J}]\cdot	\Pbb[ \mathfrak{g}_\psi\equiv g (Q)\, \lvert\, \Omega_{M,J}]+o(1).$
\end{enumerate}
\end{thm}

\begin{proof}
For (1), it suffices to show that  for $w\in \Cb$ near $ 0$, there exists $\gamma_2>0$ such that $\Eb[\exp(w{\mathfrak{g}_{\psi}})|\Omega_{M,J}^{{g}}]=\Eb[\exp(w{\mathfrak{g}_{\psi}})|\Omega_{M,J}]+O(M^{-\gamma_2}).$ Let us set
$$R(w):=\sum_{r\in\Omega_{M,J}^{g}}\exp(w{\mathfrak{g}_{{\psi}}}(r)).$$
Note that $R(w)/R(0)=\Eb[\exp(w{\mathfrak{g}_{\psi}})|\Omega_{M,J}^{{g}}]$ by definition.

Using the orthogonality of the additive character $t\mapsto \exp(\frac{2\pi i t}{Q})$, we have
\begin{align*}
R(w)= \frac{1}{Q} \sum_{{t\in\Zb/Q\Zb}} e^{- \frac{2 \pi i}{Q} {t}{{g}}} \sum_{r\in\Omega_{M,J}}\exp \left( \Big(\frac{2 \pi i}{Q} {t}+w\Big){\mathfrak{g}_{{\psi}}}(r) \right) .
\end{align*}
Split the sum over $t$ into two parts, $t=0$ and $t\not\equiv { 0}\modulo{Q}$. As done in the proof of Theorem \ref{intro:equid:mod:q}, from $t\not\equiv { 0}\modulo{Q}$ and the hypothesis on $\psi$, we deduce $\frac{2\pi t}{Q}\psi\not\in \Bc_\mathfrak{g}(\Gamma,\Rb/2\pi\Zb)$. Since $w$ is near $0$, from Proposition \ref{av:sum:dnw},  for some $0<\delta<2$, we get 
$$\sum_{r\in\Omega_{M,J}}\exp \left( \Big(\frac{2 \pi t}{Q} {i}+w\Big){\mathfrak{g}_{{\psi}}}(r) \right)=O(M^\delta)\mbox{ if }t\not\equiv 0\modulo{Q}.$$
In sum, $R(w)=\frac{1}{Q} \sum_{r\in\Omega_{M,J}} \exp \left(w{\mathfrak{g}_{{\psi}}}(r) \right)+O(M^\delta)$ and hence, we get $$\frac{R(w)}{R(0)}=\Eb[\exp(w{\mathfrak{g}_{\psi}})|\Omega_{M,J}]+O\left(\frac{M^\delta}{|\Omega_{M,J}|}\right).$$ 
Since $|\Omega_{M,J}|\gg M^2$ by (\ref{dn0:expression}), we obtain the statement with $\gamma_2=2-\delta$.

For (2), a simple calculation gives us
\begin{align*}
	\Pbb[ {\underline{\mathfrak{g}}_\psi}  \leq x, \mathfrak{g}_\psi\equiv g (Q)\, \lvert\, \Omega_{M,J}]=	\Pbb[ {\underline{\mathfrak{g}}_\psi}  \leq x\, \lvert\, \Omega_{M,J}^g]\cdot	\frac{|\Omega_{M,J}^g|}{|\Omega_{M,J}|}.
\end{align*}
From (1), we conclude the proof.
\end{proof}

In particular, we obtain the weak correlation for archimedean and residual distributions of the length of continued fractions for $\Omega_{M,J}$.

\begin{rem}\label{corr:gen:thm}
Two special cases of Proposition \ref{av:sum:dnw}, namely (1) any $\wbb$ near ${\bf v}=\bf 0$ in $\S$\ref{sec:joint:gauss} and (2) $\wbb=i{\bf v}$ with ${\bf v}\neq {\bf 0}$ in $\S$\ref{sec:res:equid}, are sufficient for our main ends of the present paper. Nevertheless, we still need the luxury of generality (any $\wbb$ near $i{\bf v}$) as it is indispensable for the proof of Theorem \ref{weak:correl:cf}.  
\end{rem}

\section{Distribution of modular symbols} \label{sec:dist:mod}

In this section, we show that the modular symbols are non-degenerate specialisation of the modular partition functions in both zero and positive characteristics. Using this, we deduce the distribution results on the modular symbols from those on the modular partitions.

\subsection{Involution on de Rham cohomology}

Let $\mathbb{H}:=\{z\in\Cb\,|\, \Im(z)>0\}$ be the upper-half plane, $\Pbb^1(\Qb):=\Qb\cup \{\infty\}$, and $\mathbb{H}^*:=\mathbb{H} \cup \Pbb^1(\Qb)$. 
Let $\Gamma$ be a congruence subgroup of $\SL_2(\Zb)$ and $X_\Gamma:=\Gamma\backslash \mathbb{H}^*$ the corresponding modular curve.
For two cusps $r, s$ in $\Pbb^1(\Qb)$, we write $\{r,s\}_\Gamma$ for the relative homology class corresponding to the projection to ${X}_\Gamma$ of the geodesic on $\mathbb{H}^*$ connecting $r$ to $s$. For $\Gamma=\Gamma_1(N)$, let us set $\{r,s\}_N:=\{r,s\}_{\Gamma_1(N)}$.

Let us denote by $H_{\mathrm{dR}}^1({X}_\Gamma)$ the first de Rham cohomology of ${X}_\Gamma$. 
We define an operator $\iota$ on $\gamma\in\SL_2(\Zb)$ and $z\in \mathbb{H}^*$ by
$$\gamma^\iota:={\rm j}\gamma {\rm j}\in \SL_2(\Zb) \ \mbox{and} \ z^\iota:=-\cl{z}\in \mathbb{H}^*.$$ 
As $\Gamma$ is assumed to be normalised by ${\rm j}$ (e.g. $\Gamma=\Gamma_1(N)$), the action of $\iota$ yields a well-defined involution on ${X}_\Gamma$.
Let $S_2(\Gamma)$ be the space of cuspforms of weight 2 for $\Gamma$.
The involution $\iota$ then has an action on $H^1_{\mathrm{dR}}({X}_\Gamma)\simeq S_2(\Gamma)\oplus\cl{S_2(\Gamma)}$.
The involution $\iota$ interchanges $S_2(\Gamma)$ and $\cl{S_2(\Gamma)}$.
Moreover, the involution $\iota$ is normal with respect to the cap product
\begin{equation} \label{cap:prod}
\cap : H_1({X}_\Gamma,\Zb)\times H^1_{\mathrm{dR}}({X}_\Gamma) \rightarrow\Cb,\,\,  (\xi,\omega)\mapsto \xi\cap \omega =\int_{\xi}\omega.
\end{equation}
The cap product can be interpreted as follows. For $f \in
S_2(\Gamma)$, $g \in \overline{S_2(\Gamma)}$, and $\{r,s\}_\Gamma\in H_1({X}_\Gamma,\Qb)$, set
\begin{equation} \label{pairing:merel}
\big< \{r,s\}_\Gamma, (f,g) \big> = \int_r^s f(z)dz + \int_r^s g(z)dz^\iota .
\end{equation}  
Then it is known that the pairing $\langle\cdot\,,\,\cdot\rangle$ is non-degenerate (See Merel \cite{merel}). 
Note that the modular symbol $\mG_f^\pm(r)$ can be understood as the above pairing (\ref{pairing:merel}) between a relative homology class $\{r, i\infty\}_N$ and de Rham cohomology class  $(f,\mp f^\iota)$, respectively.

\subsection{Optimal periods}\label{sec:opt:per}

We discuss preliminary results to study the residual distribution of modular symbols.

Let $f$ be a newform of level $N$ and weight $2$. Let ${\bf m}$ be a maximal ideal of the Hecke algebra $\Tbb_N$ such that the characteristic of $\Tbb_N/{\bf m}$ is $p$ and corresponds to $f$. 
There exists a Galois representation $\rho_{\bf m}:\Gal(\cl{\Qb}/\Qb)\rightarrow \GL_2(\Tbb_N/{\bf m})$. 

Let $C_1(N)$ be the set of cusps on $X_1(N)$. Consider $\left(X_1(N), C_1(N)\right)$-relative  homology sequence
\begin{equation}\label{Homology:exact}
 0\rightarrow H_1(X_1(N),\Zb) \rightarrow H_1(X_1(N),C_1(N),\Zb) \rightarrow H_0(C_1(N),\Zb) \rightarrow \Zb \rightarrow 0.
\end{equation}
For a prime $q$ with $q\equiv 1\modulo{Np}$, let $D_q={\rm T}_q -q \cha{q}-1$. The following is observed in Greenberg-Stevens \cite{greenberg:stevens}: 
The operator $D_q$ annihilates  $H_0(C_1(N),\Zb)$ in (\ref{Homology:exact}). Let $\Tbb_{N,{\bf m}}$ denote the completion of the Hecke algebra $\Tbb_N$ at $\bf m$. Since $D_q$ is a unit in $\Tbb_{N,{\bf m}}$ if $\rho_{\bf m}$ is irreducible, we can conclude that $H_1(X_1(N),\Zb)_{\bf m}$ is isomorphic to $H_1(X_1(N),C_1(N),\Zb)_{\bf m}$. 
For a $\Zb_p$-algebra $R$ with the trivial action of $\Tbb_N$, we have a perfect pairing 
\begin{align}\label{perf:pairing:m}
H_1(X_1(N),R)_{\bf m}\times H^1(\Gamma_1(N),R)_{\bf m}\rightarrow R.
\end{align} 
When $R$ is given by $\Cb$, the pairing is realized as the Poincar\'{e} pairing under the isomorphism $\Cb_p\simeq \Cb$.

Let $\Oc$ be an integral extension of $\Zb_p$ including the Fourier coefficients of $f$. Assume $N \geq 3$, $p \nmid 2N$, and $\rho_{\bf m}$ is irreducible.  Then, there is a Hecke equivariant isomorphism 
\begin{align}\label{isom:cohom:modform}
{\delta^\pm}: S_2(\Gamma_1(N),\Oc)_{\bf m}\,\,{\cong}\,\, H^1(\Gamma_1(N),\Oc)_{\bf m}^\pm .
\end{align}
It is the isomorphism mentioned in Vatsal \cite{vatsal}.

Let $\omega_f\in H_1(\Gamma_1(N),\Cb)$ be a cohomology class corresponding to $2\pi if(z) dz$. 
Using the isomorphism (\ref{isom:cohom:modform}) and the theorem of strong multiplicity one, the periods $\Omega^\pm_f\in\Cb_p$ can be chosen (see Vatsal \cite{vatsal}) so that 
\begin{align}\label{optimal:period:def}
\Omega_f^\pm\delta^\pm(f)=\omega_f\pm \omega_f^\iota.
\end{align}
It is known that for a newform $f_E$ corresponding to an elliptic curve $E$ over $\Qb$, the period $\Omega_{f_E}^\pm$ can be chosen as  the N\'eron periods $\Omega_E^\pm$ of $E$.

\subsection{Modular symbol as a modular partition function: Manin's trick}\label{subsec:manin:trick}

We describe how the statistics of continued fractions enters into our discussion on the distribution of modular symbols.

Let $f$ be a newform for $\Gamma_0(N)$ of weight $2$, i.e., cuspform for $\Gamma_1(N)$ with the trivial Nebentypus. Manin \cite{manin} notices that the period integral can be written as
\begin{align*}
\int_0^r f(z)dz= \sum_{i=1}^\ell \int_{\frac{P_{i-1}}{Q_{i-1}}}^{\frac{P_i}{Q_i}} f(z)dz = - \sum_{i=1}^\ell \int_{\widetilde{g_i}(r) \cdot 0}^{\widetilde{g_i}(r) \cdot \infty} f(z)dz
\end{align*}
with $\widetilde{g}_i(r)=\twobytwotiny{P_{i-1}}{-P_i}{Q_{i-1}}{-Q_i}$ if $\det g_i(r)=-1$.
Setting $g_i=g_i(r)$, we also get
\begin{align*}
	\int_0^{-r} f(z)dz=\sum_{i=1}^\ell \int_{-\frac{P_{i-1}}{Q_{i-1}}}^{-\frac{P_i}{Q_i}} f(z)dz= - \sum_{i=1}^\ell \int_{{\rm j}\widetilde{g_i}{\rm j} \cdot 0}^{{\rm j}\widetilde{g_i}{\rm j} \cdot \infty} f(z)dz.
\end{align*}

For $u\in \Gamma_0(N)\backslash \SL_2(\Zb)$, define 
$$\psi_f^\pm(u):=\frac{1}{\Omega_f^\pm}\left(\int_{u \cdot 0}^{u \cdot \infty} f(z)dz \pm \int_{{\rm j}u{\rm j} \cdot 0}^{{\rm j}u{\rm j} \cdot \infty}f(z)dz\right)  \in \Qb_f.$$
By the definition of $\cG_{\psi_f^-}$, the modular symbols are expressed as
\begin{align}\label{manin:trick:tilde}
	\mG_f^-(r)= -\cG_{\psi_f^-}(r).
\end{align}
We observe that $\psi_f^\pm(u{\rm j})=\pm\psi_f^\pm({\rm j}u)$. Hence, we also get $\psi_f^+(\Gamma \widetilde{g}_i)=\psi_f^+(\Gamma\widehat{g}_i)$ and 
\begin{align}\label{manin:trick:tilde2}
\mG_f^+(r)= \frac{2L(1,f)}{\Omega_f^+}-\bG_{\psi_f^+}(r).
\end{align}

Let us use the optimal periods in \S\ref{sec:opt:per} with the same notation $\Omega_f^\pm$ when we study the residual equidistribution of modular symbols. By previous discussion, one obtains ${\mG}_f^\pm(r)\in \mathcal{O}$ for each $r$.
We define $\zeta_f^\pm:\Gamma_1(N)\backslash\SL_2(\Zb)\rightarrow \mathcal{O}$ by 
$$\zeta_f^\pm(u):=\{u\cdot 0,u\cdot\infty\}_N \cap \delta^\pm(f).$$
Note that we have
\begin{align}\label{manin:trick:zeta}
	\mG_f^+(r)= \frac{2L(1,f)}{\Omega_f^+}-\bG_{\zeta_f^+}(r)\mbox{ and }	\mG_f^-(r)= -\cG_{\zeta_f^-}(r).
\end{align}

\begin{rem}\label{rem:not:bG}
	The representation (\ref{manin:trick:tilde}) is no longer true for $\widehat{g}_i(r)$. In fact, we have
	$$\int_{\widehat{g_i}(r) \cdot 0}^{\widehat{g_i}(r) \cdot \infty} f(z)dz=-\int_{\pm\frac{P_{i-1}}{Q_{i-1}}}^{\pm\frac{P_i}{Q_i}} f(z)dz$$
	as $\widehat{g}_i=\twobytwotiny{\pm P_{i-1}}{\pm P_i}{Q_{i-1}}{Q_i}$ for $\det g_i(r)=\pm1$ and hence
	$$\int_0^rf(z)dz-\int_0^{-r}f(z)dz=\sum_{i=1}^\ell (-1)^i \psi_f^-(\Gamma\widehat{g}_i),$$
	which is not equal to $\bG_{\psi_f^-}(r)$, in general.
\end{rem}

\begin{rem}\label{man:mar:error}
As far as we understand, Manin--Marcolli seemed to assert that the modular symbols are expressible in terms of $\bG_{\psi_f^\pm}$. As discussed in Remark \ref{rem:not:bG}, it is doubtful to have such an expression for $\psi_f^-$. A relevant mistake is that they regarded $g_j(r)$ as an element of ${\rm PSL}_2(\Zb)$, which is not the case if $\det g_j(r)=-1$ (See Manin-Marcolli \cite[p.6, Line 12]{manin:marc}).
\end{rem}

\subsection{Gaussian distribution: Proof of Theorem \ref{result:symbol}}\label{subsec:normal:mod:sym}

In this subsection, we give a proof on the limit Gaussian distribution of modular symbols $\mG_f^\pm$ on $\Omega_{M,\varphi,J}$.

\begin{prop} \label{nonvanish:w}
	For any nontrivial $f\in S_2(\Gamma_0(N))$, the function $\psi_f^+$ ($\psi_f^-$, resp.) is not a $\bG$-coboundary ($\cG$-coboundary, resp.) over $\Rb$. 
\end{prop}

\begin{proof}
	
	First assume that $\psi_f^-$ is a $\cG$-coboundary over $\Rb$. In other words, there exists $\beta\in \Rb^{\cha{\Gamma}}$ such that $\psi_f^-(u)+\psi_f^-(u\twobytwotiny{-m}{1}{1}{0}{\rm j})=\beta(u)-\beta(u\twobytwotiny{-m}{1}{1}{0}\twobytwotiny{-n}{1}{1}{0})$ for all $u$ and  $m,n\in\Zb$. Taking $m=n=0$, we get
	$\psi_f^-(u)=-\psi_f^-(u\iota)$ for each $u$ and $\iota=\twobytwotiny{0}{-1}{1}{0}$. Furthermore we have $u\iota\cdot \infty=-u\cdot 0$ and $u\iota\cdot 0=-u\cdot\infty$. Hence, $\psi_f^-(u\iota)=\psi_f^-(u)$. In sum, $2\psi_f^-$ is the zero function. On the other hand, Manin's trick implies that the set of Manin symbols $\{u\cdot 0,u\cdot\infty\}_{\Gamma_0(N)}$ for $u\in \Gamma_0(N)\backslash\SL_2(\Zb)$ generates the first homology group of $X_0(N)$. Since the pairing (\ref{cap:prod}) is non-degenerate, we conclude that $\psi_f^\pm$ are not trivially zero as long as $f$ is non-trivial. This is a contradiction and hence we conclude that $\psi_f^-$ is not a $\cG$-coboundary.


	Assume that $\psi_f^+$ is a $\bG$-coboundary over $\Rb$, i.e., $\psi_f^+(u)=\beta(u)-\beta(u\cdot \twobytwotiny{-m}{1}{1}{0})$ for all $m$ and $u$. 
	Let $h:\Gamma_0(N)\rightarrow H_1(X_0(N),\Zb)$ given by $h(\gamma):=\{0,\gamma\cdot 0\}_{\Gamma_0(N)}$. Let $\gamma\in\Gamma_0(N)$. Note that 
	$\{0,\gamma\cdot 0\}_{\Gamma_0(N)}=\{\infty,\gamma\cdot\infty\}_{\Gamma_0(N)}$. Since $\twobytwotiny{1}{m}{0}{1}\in\Gamma_0(N)$ for each $m\in\Zb$, we get $$\{\infty,\gamma\cdot\infty\}_{\Gamma_0(N)}=\{\twobytwotiny{1}{m}{0}{1}\cdot\infty,\twobytwotiny{1}{m}{0}{1}\gamma\cdot\infty\}_{\Gamma_0(N)}=\{\infty,\twobytwotiny{1}{m}{0}{1}\gamma\cdot\infty\}_{\Gamma_0(N)}.$$ Therefore we have $h(\gamma)=\{0,\twobytwotiny{1}{m}{0}{1}\gamma\cdot 0\}_{\Gamma_0(N)}=h(\twobytwotiny{1}{m}{0}{1}\gamma)$ for each $m\in\Zb$ and hence may assume that $0< \gamma\cdot 0< 1$. Let $\gamma\cdot 0=[0;m_1,\cdots,m_\ell]$ and $g_i=g_i(\gamma\cdot 0)$. Note that $g_i\twobytwotiny{-m_i}{1}{1}{0}=g_{i-1}$ with $g_0=I$. Then, we have 
	\begin{align*}
	h(\gamma)&\cap (\omega_f+\omega_f^\iota)=\int_0^{\gamma\cdot 0}f(z)dz+\int_0^{-\gamma\cdot 0}f(z)dz=\sum_{i=1}^\ell\psi_f^+(\Gamma_0(N) \widehat{g}_i)\\
	&=\sum_{i=1}^\ell \beta(\Gamma_0(N)\cdot g_i)-\beta(\Gamma_0(N)\cdot g_i \twobytwotiny{-m_i}{1}{1}{0})=\beta(\Gamma_0(N)\widehat{g}_\ell)-\beta(\Gamma_0(N)).
	\end{align*}

Observe that $\psi_f^+(u)=\psi_f^+(-u)$ and $\psi_f^+({\rm j}u)=\psi_f^+(u{\rm j})$ for all $u$. By Proposition \ref{misc:beta:prop}.(\ref{psl:invj}), we know $\beta({\rm j}u)=\beta(u{\rm j})$ for all $u$. In particular, $\beta(\Gamma_0(N)\widehat{g}_\ell)=\beta(\Gamma_0(N)\widetilde{g}_\ell)$.
Note that $\gamma\cdot 0=\widetilde{g}_\ell\cdot 0$, i.e., $\widetilde{g}_\ell\in \gamma{\rm L}$. Since $\beta$ is ${\rm L}$-invariant, we get $\beta(\Gamma_0(N)\widetilde{g}_\ell)=\beta(\Gamma_0(N)\gamma)=\beta(\Gamma_0(N))$. In sum we get $h(\gamma)\cap (\omega_f+\omega_f^\iota)=0$ for all $\gamma\in\Gamma_0(N)$. Since it is well-known that $h$ is surjective, we conclude from the non-degeneracy of the pairing (\ref{cap:prod}) that $f=0$, which is a contradiction. 
\end{proof}

We are ready to present:

\begin{proof}[Proof of Theorem \ref{result:symbol}]
By the expressions (\ref{manin:trick:tilde}) and (\ref{manin:trick:tilde2}), the distribution of modular symbols follows from the ones of modular partition functions $\bG_{\psi_f^+}$ and $\cG_{\psi_f^-}$. Now Theorem \ref{result:symbol} follows from Theorem \ref{vector:far0} and Proposition \ref{nonvanish:w}.
\end{proof}

\subsection{Residual distribution: Proofs of Theorem \ref{res:equid:mod} and \ref{non:van:thm:intro}}\label{subsec:res:dist:mod} 

In this subsection, we give proofs on the residual equidistribution of integral random variable ${\mG}_E^\pm$ on $\Omega_{M,\varphi,J}$ and non-vanishing result on the special $L$-values.

First, we need the following:

\begin{prop} \label{nonvanish:w:modp}
	Let $\varpi$ be a uniformizer of $\mathcal{O}$. Assume $N \geq 3$, $p \nmid 2N$, and $\rho_{\bf m}$ is irreducible. Let $f\not\equiv0\modulo{\varpi}$. Then  $\zeta_f^+ \modulo{\varpi}$ ($\zeta_f^- \modulo{\varpi}$, resp.) is not a $\bG$-coboundary ($\cG$-coboundary, resp.) over $\mathcal{O}/(\varpi)$.
\end{prop}

\begin{proof}

First assume that $\zeta_f^-\modulo{\varpi}$ with $\zeta_f^-(u)=\{u\cdot 0,u\cdot i\infty\}_N\cap \delta^\pm(f)$ is a $\cG$-coboundary over $\mathcal{O}/(\varpi)$. As done in the proof of Proposition \ref{nonvanish:w}, for each $u$, we get $\zeta_f^-(u)=-\zeta_f^-(u\iota)$ with the action of $\iota$ on $\delta^\pm(f)$. Moreover, we have $\zeta_f^-(u\iota)=\zeta_f^-(u)$ using the action of $\iota$ on $0$ and $i\infty$. In sum, we get $2\zeta_f^-\equiv0\modulo{\varpi}$. On the other hand, the Manin symbols generate the first homology group of $X_1(N)$. Therefore due to the perfectness of the pairing (\ref{perf:pairing:m}), the congruence $\zeta_f^-\equiv0\modulo{\varpi}$ implies that $\delta^-(f)\equiv0\modulo{\pi}$. However it is forbidden by the hypothesis using (\ref{isom:cohom:modform}). In total, we conclude that $\zeta_f^-$ is not a $\cG$-coboundary over $\mathcal{O}/(\varpi)$.


Assume that $\zeta_f^+\modulo{\varpi}$ is a $\bG$-coboundary over $\Oc/(\varpi)$, i.e., there exists a function $\beta\in (\Oc/(\varpi))^{\cha{\Gamma_1(N)}}$ such that $\zeta_f^+(u)\equiv\beta(u)-\beta(u\cdot \twobytwotiny{-m}{1}{1}{0})\modulo{\varpi}$ for all $m$ and $u\in\Gamma_1(N)\backslash\SL_2(\Zb)$. 
Let us set $k(\gamma):=\{\infty,\gamma\cdot \infty\}_N\cap \delta^+(f)\in\Oc$ for $\gamma\in\Gamma_1(N)$. 
\hide{++++++++++++++++++++++++++++
Using (\ref{optimal:period:def}), we also get $k(\gamma)=(\Omega_f^\pm)^{-1} \{\infty,\gamma\cdot\infty\}_{\Gamma_0(N)}\cap (\omega_f\pm\omega_f^\iota)$. Note that $k(\gamma)=k(\gamma\twobytwotiny{1}{m}{0}{1})$ for $m\in\Zb$. We can find an $m$ so that $0< \gamma'\cdot 0< 1$ for $\gamma':=\gamma\twobytwotiny{1}{m}{0}{1}\in\Gamma_0(N)$. Let $\gamma'\cdot 0=[0;m_1,\cdots,m_\ell]$ and let $g_1,\cdots,g_\ell$ be the convergents of $\gamma'\cdot 0$.  Since $\{0,\gamma'\cdot 0\}_{\Gamma_0(N)}=\{\infty,\gamma'\cdot\infty\}_{\Gamma_0(N)}$, we have 
	\begin{align*}
	k(\gamma)=\frac{1}{\Omega_f^\pm }\{0,\gamma'\cdot 0\}_{\Gamma_0(N)}\cap (\omega_f\pm\omega_f^\iota)=\sum_{i=1}^\ell\zeta_f^+(\Gamma_1(N) \widehat{g}_i). 	
	\end{align*}
Using (\ref{optimal:period:def}) we obtain $\zeta_f^+(u\delta)=\zeta_f^+(u)$ for all $\delta\in\Gamma_0(N)$. By Proposition \ref{Gamma1:Gamma0}, we get $\beta\in \Oc^{\cha{\Gamma_0(N)}}$. 
Similarly as the previous proof, we obtain
	\begin{align*}
	k(\gamma)\equiv\beta(\Gamma_0(N)\widehat{g}_\ell)-\beta(\Gamma_0(N)) \modulo{\varpi}
	\end{align*}
and 
+++++++++++++++++++++++++++}
Similarly as the previous proof, from the observation that $\zeta_f^+(u)=\zeta_f^+(-u)$ and $\zeta_f^+({\rm j}u)=\zeta_f^+(u{\rm j})$ for all $u$, we can conclude that $k(\gamma)\equiv 0\modulo{\varpi}$ for all $\gamma\in\Gamma_1(N)$. Using the non-degeneracy of the pairing (\ref{perf:pairing:m}) and the isomorphism (\ref{optimal:period:def}), we obtain $f\equiv0\modulo{\varpi}$, which is a contradiction. This finishes the proof.
\end{proof}

We are ready to give:

\begin{proof}[Proof of Theorem \ref{res:equid:mod}]
	By the expressions (\ref{manin:trick:zeta}), the residual distribution of modular symbols follows ones of modular partition functions $\bG_{\zeta_f^+}$ and $\cG_{\zeta_f^-}$. Now Theorem \ref{res:equid:mod} follows from Theorem \ref{intro:equid:mod:q} and Proposition \ref{nonvanish:w:modp}.
\end{proof}

We also present: 

\begin{proof}[Proof of Theorem \ref{non:van:thm:intro}]
Let $c$ be a number less than $1-\sqrt{ 1-\frac{6}{\pi^2}(1-\frac{1}{p})}$.
Let $\widehat{(\Zb/n\Zb)}^\times_{\pm}$ be the set of Dirichlet characters modulo $n$ that are even or odd according to the parity $\pm$. Let us set
$$T_M^\pm:=\left\{1<n\leq M \,\bigg|\,\exists\,\chi\in \widehat{(\Zb/n\Zb)}^\times_\pm, \Lambda_E(\chi)\not\equiv0\,(\mathfrak{p}^{1+v_{\mathfrak{p}}(\phi(n))})\right\}.$$
The statement follows once the following inequality is verified:
$\# T_M^\pm\geq cM$ for all sufficiently large $M$.
Let us assume the contrary, i.e., suppose that $\#T_M^\pm< c M$ for infinitely many $M$. Then for each $n\not\in T_M^\pm$ with $1<n\leq M$ and $m\in (\Zb/n\Zb)^\times$, we obtain
\begin{align*}
\sum_{\chi\in \widehat{(\Zb/n\Zb)}^\times_{\pm}}\cl{\chi}(m)\Lambda_E(\chi)\equiv 0\modulo{\mathfrak{p}^{1+v_{\mathfrak{p}}(\phi(n))}}.
\end{align*}
Then for all $r\in \Sigma_{n}$ with $n\not\in T_M^\pm$, we obtain ${\mathfrak{m}}_E^\pm(r)\equiv 0\modulo{p}$. From this, we can conclude 
$$\sum_{1<n\leq M\atop n\not\in T_M^\pm}\phi(n)<\frac{1}{p}\sum_{1<n\leq M}\phi(n)\mbox{, i.e., }\sum_{n\in T_M^\pm}\phi(n)>\left(1-\frac{1}{p}\right)\sum_{1<n\leq M}\phi(n).$$
Since $\# T_M^\pm<cM$, the L.H.S. is smaller than or equal to
$$\sum_{M-cM<n\leq M}n \leq \frac{1}{2}(1-(1-c)^2)M^2.$$
Note that $\lim_{M\rightarrow\infty}\frac{1}{M^2}\sum_{n\leq M}\phi(n)=\frac{3}{\pi^2}$. Hence we obtain $(1-c)^2\leq 1-\frac{6}{\pi^2}(1-\frac{1}{p})$ which is a contradiction to the choice of $c$. 
\end{proof}

\begin{rem}
It seems that  it is currently not doable to deduce an estimate on
$$\#\bigcup_{n\leq M, p\nmid \phi(n)}\left\{\chi\in \widehat{(\Zb/n\Zb)}^\times \,\bigg|\,\Lambda_E(\chi)\not\equiv0\,(\mathfrak{p}),\,\chi(-1)=\pm1\right\}$$
from the previous proof or similar argument since the set $\{n\leq M\,|\, p\nmid \phi(n)\}$ is too thin as its size is asymptotic to $M(\log M)^{-1/(p-1)}$ (See Spearman--Williams \cite{spea:will}).
\end{rem}

\begin{rem}
It is worthwhile to mention about previous research on the residual non-vanishing of $L$-values. The ergodic approach for the Dirichlet $L$-values has been extensively generalised to the study of anti-cyclotomic twists (for example, see Hida \cite{hida},  Burungale--Hida \cite{burungale:hida}, and Vatsal \cite{vatsal2}).  Meanwhile, up until now, there has been no notable analogous progress for the modular $L$-values with cyclotomic or Dirichlet twists except a few cases. 
The first non-vanishing result goes back to Ash--Stevens \cite{ash:stevens} and Stevens \cite{stevens} for a large class of characters. Kim--Sun \cite{kim:sun} recently obtained the non-vanishing result for a positive proportion of characters $\chi$ of $\ell$-power conductors with a prime $\ell \neq p$. However, all of these results are based on the classical arguments and their improvements. It is also worthwhile to mention another ergodic approach for the Dirichlet $L$-values proposed recently by Lee-Palvannan \cite{lee:pal}.
\end{rem}

Using Theorem \ref{weak:correl:cf}, we present an answer to Mazur's question on the weak correlation between archimedean and residual distributions of modular symbols:

\begin{thm}\label{mod:symb:correl}
	Assume that $\cl{\rho}_{E,p}$ is irreducible and $p\nmid N_E$. For $x\in\Rb$ and $a\in \Zb/p^e\Zb$, as $M\rightarrow\infty$, we get
		\[ 
		\Pbb[\underline{\mG}_E^\pm\leq x,\,\mG_E^\pm\equiv a (p^e) |\Omega_{M,J}]= \Pbb[\underline{\mG}_E^\pm\leq x|\Omega_{M,J}]\cdot \Pbb[\mG_E^\pm\equiv a (p^e) |\Omega_{M,J}]+o(1).
		\]
\end{thm}

\section{Skewed Gauss dynamical systems} \label{sec:transition}

The remaining part of this paper will be devoted to explain in details how Proposition \ref{main:dynamics} can be obtained. We first present an underlying dynamical description for the modular partitions motivated by the work of Baladi--Vall\'ee \cite{bv}.

\subsection{Skewed Gauss Map} \label{skew:system}

Let us recall that the skewed Gauss map $\Tb$ on ${I} \times \Gamma \backslash \GL_2(\Zb)$ is given by $\Tb(x,v) = \left( T(x), v\twobytwotiny{-m_1(x)}{1}{1}{0} \right)$ 
and the skewed Gauss map $\widehat{\Tb}$ on $I_\Gamma={I} \times \Gamma \backslash \SL_2(\Zb)$ is given by $\widehat{\Tb}(x,v) = \left( T(x), v\cdot\twobytwotiny{-m_1(x)}{1}{1}{0} \right)$. 

Let $K^\circ(m_1,\cdots,m_\ell)$ be the open fundamental interval associated with the digits $m_i$, in other words,
$$K^\circ(m_1,\cdots,m_\ell):=\{[0;m_1,\cdots,m_\ell+x]\,|\,0<x<1\}.$$
An easy observation is
\begin{align}\label{markov:skew}
\widehat{\Tb}^\ell(K^\circ(m_1,\cdots,m_\ell)\times \{v\})=(0,1)\times \Big\{v\cdot\twobytwotiny{-m_1}{1}{1}{0}\twobytwotiny{-m_2}{1}{1}{0}\cdots \twobytwotiny{-m_\ell}{1}{1}{0}\Big\}.
\end{align}
It can be easily seen that $\Tb$ and $\widehat{\Tb}$ are  measure-preserving and in fact are ergodic with respect to the product measure of the Gauss measure and counting measure on the skewed Gauss dynamical systems $({I} \times \Gamma \backslash \GL_2(\Zb),\Tb)$ and $(I_\Gamma,\widehat{\Tb})$, respectively. However, measure-theoretic properties will not be investigated in this paper as we restrict our attention to topological properties.

For a dynamical system $(X,f)$, the map $f$ is called \emph{topologically transitive} if for any non-empty open subsets $U$ and $V$ in $X$, there exists a positive integer $L$ such that $f^L(U) \cap V \neq \varnothing$; and \emph{topologically mixing} if $f^n(U) \cap V \neq \varnothing$ for all $n \geq L$. Notice that if $f$ is topologically mixing, then it is topologically transitive.

\hide{+++++++++++++++++++++++++++++++++++++
\begin{prop} \label{top:trans}
	Let $\Gamma$ be a subgroup of $\SL_2(\Zb)$ with a finite index.
	\begin{enumerate}[leftmargin=*]
		\item The map $\widehat{\Tb}$ on $I_\Gamma$ is topologically transitive.
		\item \label{dens:two:sets}For any $(x_0,v_0)\in I_\Gamma$, the set $\bigcup_{n\geq 1}\widehat{\Tb}^{-n}(x_0,v_0)$ is dense in $I_\Gamma$.
	\end{enumerate}
\end{prop}

\begin{proof}
(2) Let $V$ be an open subset of $I_\Gamma$. We may assume $V=K^\circ(a_1,\cdots,a_k)\times\{u\}$ for a right coset $u$ and some $a_i$. By Lemma \ref{gen:mat:lem}.(\ref{item:gen:digit:mat}), there exists $m_1,\cdots,m_\ell$ such that 
	$$u\twobytwo{-a_1}{1}{1}{0}\cdots\twobytwo{-a_k}{1}{1}{0}\cdot\twobytwo{-m_1}{1}{1}{0}\cdots\twobytwo{-m_\ell}{1}{1}{0}=v_0.$$
	Then we have
	$$(x_0,v_0)\in \widehat{\Tb}^{k+\ell}(K^\circ(a_1,\cdots,a_k,m_1,\cdots,m_\ell)\times\{u\})\subset \widehat{\Tb}^{k+\ell}(V).$$
	Hence, the set $\bigcup_{n\geq 1}\widehat{\Tb}^{-n}(x_0,v_0)$ is dense. 

(1) The transitivity comes from the first statement.	
\end{proof}
+++++++++++++++++++++++++++++++++++++++++++++++}

\begin{prop} \label{op:alg:simple}
\begin{enumerate}[leftmargin=*]
\item The map $\widehat{\Tb}$ on $I_\Gamma$ is topologically mixing.
\item \label{mixing:dense}For any sequence $(x_n,v_n)$ in $I_\Gamma$, the set $\bigcup_{n\geq 1}\widehat{\Tb}^{-n}(x_n,v_n)$ is dense in $I_\Gamma$.
\end{enumerate}
\end{prop}

\begin{proof}
(1). Take any non-empty open sets $U$ and $V$ in $I_\Gamma$. Then, one can assume that $U$ is of the form $(a,b)\times \{u\}$ for some $0<a<b<1$ and $u \in \Gamma\backslash\SL_2(\Zb)$. Since the Gauss map $T$ satisfies the strong Markov property, i.e., $T([ \frac{1}{m+1}, \frac{1}{m}))=[0,1)$ for all $m \geq 1$, we have $T^n(a,b)={I}$ for all sufficiently large $n$. 
Once we have the full image on the first coordinate, we obtain all the elements in the skewed component at all sufficiently many iterations as well using Proposition \ref{prop:cong:Tadm}. Hence we can conclude that $\widehat{\Tb}^n(U)\cap V=I_\Gamma \cap V \neq \varnothing$ for all sufficiently large $n$.

(2). Let $V$ be an open subset of $I_\Gamma$. We may assume $V=K^\circ(a_1,\cdots,a_k)\times \{u\}$. By Proposition \ref{prop:cong:Tadm}, there exists $L\geq 1$ such that for all $\ell\geq L$, we get  
	$$u\cdot\twobytwotiny{-a_1}{1}{1}{0}\cdots\twobytwotiny{-a_k}{1}{1}{0}\cdot\twobytwotiny{-m_1}{1}{1}{0}\cdots\twobytwotiny{-m_\ell}{1}{1}{0}=v_n$$
some $m_1,\cdots,m_\ell$.
	Then, by (\ref{markov:skew}), we have
	$(x_n,v_n)\in \widehat{\Tb}^{n}(V)$ when $n\geq k+L$.	Hence, we prove the second statement.
\end{proof}

\begin{rem}\label{not:T:adm}
In a similar way, one can show that $\Tb$ is also transitive. However, it is easy to see that $\Tb$ is not topologically mixing for any  subgroup $\Gamma$ of $\SL_2(\Zb)$.
\end{rem}

\subsection{Inverse branches}

Let $\Qbb$ be the set of inverse branches of $\Tb$, that is,
$$\Qbb:=\{\qb_m\,|\, m\in\Zb_{\geq 1}\}$$
where an inverse branch $\qb_m:{I}\times\Gamma\backslash\GL_2(\Zb)\rightarrow {I}\times\Gamma\backslash\GL_2(\Zb)$ is given by
\begin{align}\label{inv:branch}
	\qb_m(x,v)=\left( \frac{1}{m+x}, v\twobytwo{0}{1}{1}{m}\right).
\end{align} 
Let $\mathbf{F}\subseteq \Qbb$ be the final set that consists of branches corresponding to  the final digits of continued fractions. In other words, it is given by 
$$\mathbf{F}:=\{\qb_m\,|\, m\geq 2\}.$$

\subsubsection{Basic setting}
For $n\geq 1$, let us denote by ${\Qbb}^{\circ n}$ the set of inverse branches of the $n$-th iterate $\Tb^n$, which is equal to 
$${\Qbb}^{\circ n}:=\Qbb\circ\cdots\circ\Qbb=\{\qb_{m_n} \circ \qb_{m_{n-1}} \circ \cdots \circ \qb_{m_1}\,|\,m_1,\cdots,m_n\geq 1\}$$
and ${\bf Q}^{\circ 0}:=\{{\bf id}_{I_\Gamma}\}$. Let us also set 
$$\Qbb^{\infty}:=\bigcup_{n \geq 0} \Qbb^{\circ n}.$$
The index $n$ is called the \emph{depth} of the inverse branches. 
For an inverse branch $\qb=\qb_{m_n} \circ \cdots \circ \qb_{m_2} \circ \qb_{m_1}$  of depth $n$ and $i\leq n$, let us set the $i$-th part $\qb^{(i)}$ of $\qb$ as
$$\qb^{(i)}:=\qb_{m_i} \circ \cdots\circ\qb_{m_2} \circ \qb_{m_1}=\Tb^{n-i}\circ\qb\in\Qbb^{\circ i}.$$
For a branch $\qb(x,u)=(y(x),ug)$, let us set $\pi_i\qb$ as the $i$-th component of $\qb$, i.e.,
$$\pi_1\qb(x,u):=y(x)\mbox{ and }\pi_2\qb(x,u):=ug.$$

\subsubsection{Definitions for $\bG$ and $\cG$}
Let $\qb\in \Qbb^{\infty}$ be given as $\qb(x,u)=(y(x),ug)$ for some $y(x)$ and $g\in\GL_2(\Zb)$. Then for $v\in\Gamma\backslash\SL_2(\Zb)$, let us define
$$\widehat{\qb}(x,v):=(y(x),v\widehat{g})=(y(x),v\cdot g).$$
These consist of the set of inverse branches of $\widehat{\Tb}$, denoted by $\widehat{\Qbb}$. We also set 
$$\widetilde{\qb}(x,v):=(y(x),v\widetilde{g}).$$
It can be easily checked from the action of $\GL_2(\Zb)$ on $\Gamma\backslash\SL_2(\Zb)$ that for all $\mathbf{p}$ and $\qb\in \Qbb^{\infty}$, one obtains
\begin{align}\label{rel:inv:branch}
\widehat{\mathbf{p}\circ\qb}=\widehat{\mathbf{p}}\circ\widehat{\qb}\mbox{ and } \widehat{\qb}^{(i)}=\widehat{\qb^{(i)}}=\widehat{\Tb}^{n-i}\circ\widehat{\qb}.
\end{align}
For $\qb\in\Qbb\circ\Qbb$, the map $\pi_2\qb$ is now just a right action of $\SL_2(\Zb)$ by the relation (\ref{tilde:g12:right}). Therefore, if $\qb$ is of even depth, then $\widehat{\qb}=\widetilde{\qb}=\qb|_{I_\Gamma}$ and for all $\pb\in\Qbb^\infty$ we obtain
\begin{align}\label{tilde:decomp:pq}
	\widetilde{\mathbf{p}\circ\qb}=\widetilde{\mathbf{p}}\circ\widetilde{\qb}.
\end{align} 
In particular, for $\qb\in\Qbb^{\circ 2n}$ and $1\leq i\leq n$,
\begin{align*}
\widetilde{\qb}^{(2i)}=\widetilde{\qb^{(2i)}}=(\widehat{\Tb}^2)^{n-i }\circ \widetilde{\qb}\mbox{ and }
\widetilde{\qb^{(2i-1)}}=\widetilde{\Tb}\circ(\widehat{\Tb}^2)^{n-i}\circ\widetilde{\qb}.
\end{align*}

\hide{==============
\begin{proof}
Set $\qb=\qb_{m_n}\circ\cdots\circ\qb_{m_1}$, i.e., $$\qb(x,v)=([0;m_n,\cdots,m_1+x], v\twobytwotiny{0}{1}{1}{m_1}\cdots \twobytwotiny{0}{1}{1}{m_n}).$$
Note that
$$\widehat{\qb}(x,v)=([0;m_n,\cdots,m_1+x], v\cdot\twobytwotiny{0}{1}{1}{m_1}\cdots \twobytwotiny{0}{1}{1}{m_n})$$
and that
$$\widetilde{\qb}(x,v)=
\begin{cases}
	\widehat{\qb}(x,v)&\mbox{ if $n$ is even}\\
	([0;m_n,\cdots,m_1+x], v\twobytwotiny{0}{1}{1}{m_1}\cdots \twobytwotiny{0}{1}{1}{m_n}{\rm j})&\mbox{  otherwise}
\end{cases}.$$
Hence, if $\qb\in \Qbb^{\circ 2n}$, then
\begin{align*}
\widetilde{\qb^{(2i-1)}}(x,v)&=([0;m_{2i-1},\cdots,m_1+x], v\twobytwotiny{0}{1}{1}{m_1}\cdots \twobytwotiny{0}{1}{1}{m_{2i-1}}{\rm j})\\
&=\widetilde{\Tb}([0;m_{2i},\cdots,m_1+x], v\twobytwotiny{0}{1}{1}{m_1}\cdots \twobytwotiny{0}{1}{1}{m_{2i}})\\
&=\widetilde{\Tb}(\qb^{(2i)}(x,v))\\
&=\widetilde{\Tb}(\Tb^{2(n-i)}(\qb(x,v)))
\end{align*}
\end{proof}

======================}

\subsubsection{Specialisation}
It can be easily seen that there is a one-to-one correspondence between 
$\Qb\cap (0,1)$ and $\mathbf{F}\circ \Qbb^{\infty}$ given by
\begin{align*}
	r=[0;m_1,\cdots,m_\ell] \longmapsto \qb_r:=\qb_{m_\ell} \circ \cdots \circ \qb_{m_1}
\end{align*}
with $m_1, \cdots, m_{\ell-1} \geq 1$ and $m_\ell \geq 2$. We obtain
\begin{prop}
For each $r\in\Qb\cap (0,1)$, 
\begin{align}\label{ev:qbr}
\qb_r(0,\Gamma)=(r^*, \Gamma g(r)),\, \widehat{\qb}_r(0,\Gamma)=(r^*,\Gamma\widehat{g}(r)),\,\widetilde{\qb}_r(0,\Gamma)=(r^*,\Gamma\widetilde{g}(r)).
\end{align}
\end{prop}

\begin{proof}
We get $\pi_2\qb_r^{(i)}(\Gamma)=\Gamma g_i(r)$ from the expression
\begin{align*}
	\qb_{m_n} \circ \qb_{m_{n-1}} \circ \cdots \circ \qb_{m_1}(0,\Gamma)=\left(\frac{Q_{n-1}}{Q_n},\Gamma  g\big([0;m_1,\cdots,m_n]\big) \right)
\end{align*}
where $P_n/Q_n =[0;m_1,\cdots,m_n]$ and $Q_{n-1}/Q_n =[0;m_n,m_{n-1},\cdots,m_1]$.
This finishes the proof.
\end{proof}

\subsection{Branch analogues of modular partitions}
\label{sect:bran:mod:part}

In this section, we introduce the branch versions of modular partition functions, which liaise between the Dirichlet series and the corresponding transfer operators in $\S$\ref{sect:tr:op}.


For a function $\varphi$ on $\Gamma\backslash\GL_2(\Zb)$, let us abuse the notation $\aG_\varphi$ to define a branch analogue of $\aG_\varphi(r)$ in an inductive way such that 
$\aG_\varphi(\qb):=\aG_\varphi(\qb^{(n-1)})+\varphi\circ\pi_2\qb$ for each $\qb\in\Qbb^{\circ n}$, $n\geq 1$ and $\aG_\varphi({\bf id}_{I_\Gamma}):=\bf 0$. Similarly, we define
\begin{align*}
\bG_\psi(\qb)&:=\bG_\psi\big({\qb^{(n-1)}}\big)+\widehat{\psi}\circ\pi_2{\qb},\\
\cG_\psi(\qb)&:=\cG_\psi\big({\qb^{(n-1)}}\big)+\widetilde{\psi}\circ\pi_2{\qb}
\end{align*}
for $\qb\in\Qbb^{\circ n}$, $n\geq 1$, and a function $\psi$ on $\Gamma\backslash\SL_2(\Zb)$. We also set $\bG_\psi({{\bf id}}_{I_\Gamma})=\cG_\psi({{\bf id}}_{I_\Gamma})=\bf 0$. We obtain: 
\begin{prop}\label{cgr:cgqb}
For $r\in \Qb\cap (0,1)$, we have $\aG_\varphi(r)=\aG_\varphi(\qb_r)(0,\Gamma)$, $\bG_\psi(r)=\bG_\psi({\qb}_r)(0,\Gamma)$, and $\cG_\psi(r)=\cG_\psi({\qb}_r)(0,\Gamma)$.
\end{prop}

\begin{proof}
It is immediate from the definitions that  
$\aG_\varphi({\qb})=\sum_{i=1}^n \varphi\circ \pi_2{\qb}^{(i)}$, $\bG_\psi({\qb})=\sum_{i=1}^n \widehat{\psi}\circ \pi_2{\qb^{(i)}}$, and $\cG_\psi({\qb})=\sum_{i=1}^n \widetilde{\psi}\circ \pi_2{\qb^{(i)}}$   for $\qb\in\Qbb^{\circ n}$.  From (\ref{2nd:repn:bu}) and (\ref{ev:qbr}), we obtain the statement.
\end{proof}

\begin{rem}\label{rem:br:bGG}
Since $\qb\in\Qbb^\infty$ is determined completely by $\pi_1\qb$, the value $\bG_\psi(\qb)$ also depends on $\widehat{\qb}$ as well.  In fact,  one can define $\widehat{\qb}^{(i)}$ as a product of $\widehat{\qb}_m$'s and hence define $\bG(\widehat{\qb})$ analogously. Since ${\rm G}[\pi_2{\qb^{(i)}}(v)]={\rm G}u$ is equivalent to $\pi_2\widehat{\qb^{(i)}}(v)\in u$, one can conclude that $\bG_\psi(\qb)=\bG_\psi(\widehat{\qb})$.
\end{rem}

\begin{rem}\label{rem:cr:cGG}
One may want to define $\widetilde{\qb}^{(i)}$ as a product of $\widetilde{\qb}_m$'s and hence to define $\cG(\widetilde{\qb})$ analogously. However, due to the absence of an analogue of (\ref{rel:inv:branch}), the $i$-th part $\widetilde{\qb}^{(i)}$ is not equal to $\widetilde{\qb^{(i)}}$, in general. Instead, using (\ref{tilde:decomp:pq}), we can give a new definition for $\cG(\widetilde{\qb})$, which is equal to $\cG(\qb)$. Since the variable $\cG(\qb)$ is enough for our discussion, we are not going to pursuit this direction. Note that $\cG_\psi(\qb)$ is also completely determined by $\widetilde{\qb}$ as well. Hence, we also set $\cG_\psi(\widetilde{\qb}):=\cG_\psi(\qb)$ for $\qb\in\Qbb^\infty$.

\end{rem}


\section{Transfer operators}
\label{sect:tr:op}

A transfer operator is one of the main tools for studying the statistical properties of trajectories of a dynamical system. Ruelle \cite{ruelle} first made a deep observation that the behavior of trajectories of dynamics can be well explained by spectral properties of the transfer operator. In this section, we define weighted transfer operators corresponding to the modular partition functions and several miscellaneous operators necessary to obtain the desired relations between the Dirichlet series and operators. 

We use a notation that
\begin{align*}
\cha{\Gamma}:=\Gamma\backslash\GL_2(\Zb)\mbox{ or }\Gamma\backslash\SL_2(\Zb),
\end{align*}
according to the symbol $\aG$, $\bG$, or $\cG$ in discussion. We also set
$$X=X_\Gamma:={I}\times \cha{\Gamma}.$$
For a set $A$, let $A^{\cha{\Gamma}}$ be the set of all maps from $\cha{\Gamma}$ to $A$. For a $\psi\in \Cb^{\cha{\Gamma}}$, we set
$$\psi=\eta+i\zeta\mbox{ for }\eta,\zeta\in\Rb^{\cha{\Gamma}}.$$

\subsection{Branch operators}\label{subsec:br:op}

In order to represent the Dirichlet series, we first obtain expressions for the iterations of the transfer operator by studying a component of the operator, which corresponds to the continued fraction expansion of a rational number in $(0,1)$.

For $s\in \Cb$ and $\psi\in\Cb^{\cha{\Gamma}}$, the {\it branch operators} for $\mathfrak{g}$ are defined by
\begin{align*}
\mathcal{B}^\qb_{s,\psi}\Psi&:=\exp \left[ \aG_\psi(\qb) \right] |\partial\pi_1\qb|^s  \Psi \circ\qb,\\
\widehat{\mathcal{B}}^\qb_{s,\psi}\Psi&:=\exp \left[ \bG_\psi({\qb}) \right] |\partial\pi_1 \qb|^s  \Psi \circ\widehat{\qb},\\
\widetilde{\mathcal{B}}^\qb_{s,\psi}\Psi&:=\exp \left[ \cG_\psi({\qb}) \right] |\partial\pi_1 \qb|^s  \Psi \circ\widetilde{\qb} 
\end{align*}
for $\qb\in\Qbb^{\infty}$ and $\Psi\in L^\infty(X)$. Here $\partial\pi_1 \qb$ is the derivative of the first component of $\qb$. We have a multiplicative property:

\begin{prop}\label{prop:digit:iterate}
\begin{enumerate}[leftmargin=*]
\item For $\pb_1,\cdots,\pb_n\in\Qbb^\infty$ and $\qb=\pb_n\circ\cdots\circ\pb_2\circ\pb_1$, we have
$$\Bc^{\qb}_{s,\psi}=\Bc^{\pb_1}_{s,\psi}\circ\Bc^{\pb_2}_{s,\psi}\circ\cdots\circ\Bc^{\pb_n}_{s,\psi}\mbox{ and } \widehat{\mathcal{B}}^\qb_{s,\psi}=\widehat{\mathcal{B}}^{\pb_{1}}_{s,\psi}\circ \widehat{\Bc}^{\pb_{2}}_{s,\psi}\cdots \circ\widehat{\Bc}^{\pb_{n}}_{s,\psi}.$$
\item 	For $\pb,\qb\in\Qbb^{\infty}$  with $\qb$ being of even depth, we have
	$$\widetilde{\mathcal{B}}^{\qb\circ\pb}_{s,\psi}=\widetilde{\mathcal{B}}^{\pb}_{s,\psi}\circ \widetilde{\Bc}^{\qb}_{s,\psi}.$$
	In particular, for $\qb=\pb_n\circ \pb_{n-1}\circ\cdots\circ \pb_{1}$ with $\pb_i\in\Qbb^{\infty}$ of even depth for $1\leq i\leq n-1$ and $\pb_n\in\Qbb\sqcup\Qbb^{\circ 2}$, we have
$$\widetilde{\mathcal{B}}^\qb_{s,\psi}=\widetilde{\mathcal{B}}^{\pb_{1}}_{s,\psi}\circ \widetilde{\Bc}^{\pb_{2}}_{s,\psi}\cdots \circ\widetilde{\Bc}^{\pb_{n}}_{s,\psi}.$$
\end{enumerate}
\end{prop}

\begin{proof}
For the first statement it suffices to show that we get $\Bc_{s,\psi}^{\pb}\circ\Bc_{s,\psi}^{\qb}\Psi = \Bc_{s,\psi}^{\qb\circ\pb}\Psi$ for $\pb,\qb\in\Qbb^{\infty}$. This is just a consequence of applying the identity $\aG_\psi(\mathbf{p})+\psi\circ\pi_2\qb\circ \pb=\aG_\psi(\qb\circ \pb)$ and chain rule. Similarly, we obtain the statements for $\bG$ and $\cG$ with (\ref{rel:inv:branch}) and (\ref{tilde:decomp:pq}).
\end{proof}

From Proposition \ref{cgr:cgqb}, we obtain the relation between a term in the Dirichlet series  and an evaluation of a Branch operator, both of which correspond to a rational number:

\begin{cor}\label{Bqr:Dapsi}
For $r\in\Qb\cap(0,1)$ we obtain
\begin{align*}
\Bc^{\qb_r}_{s,\psi}\Psi(0,\Gamma)&=\exp[\aG_\psi(r)]Q(r)^{-2s}\Psi(r^*,\Gamma g(r)),\\
\widehat{\Bc}^{\qb_r}_{s,\psi}\Phi(0,\Gamma)&=\exp[\bG_\psi(r)]Q(r)^{-2s}\Phi(r^*,\Gamma \widehat{g}(r)),\\
\widetilde{\Bc}^{\qb_r}_{s,\psi}\Phi(0,\Gamma)&=\exp[\cG_\psi(r)]Q(r)^{-2s}\Phi(r^*,\Gamma \widetilde{g}(r)).
\end{align*}	
\end{cor}

For later use, we also record:

\begin{prop}\label{birkhoff:prop}
Setting $\widehat{\Upsilon}_{s,\psi}(x,v):=s\log|x|+\psi(v)$, for $\qb\in\Qbb^{\circ n}$ we get
$$\widehat{\Bc}_{s,\psi}^\qb\Psi=\widehat{\Bc}_{1,0}^\qb\Big[\exp\Big(\sum_{i=0}^{n-1}\Upsilon_{2s-2,\psi}\circ \widehat{\Tb}^i\Big)\Psi\Big].$$
 Setting $\widetilde{\Upsilon}_{s,\psi}(x,v):=s\log|x|+\psi(v)+\psi(\pi_2\widetilde{\Tb}(x,v))$, for $\qb\in\Qbb^{\circ 2n}$, we get
$$\widetilde{\Bc}_{s,\psi}^\qb\Psi=\widetilde{\Bc}_{1,0}^\qb\Big[\exp\Big(\sum_{i=0}^{n-1}\widetilde{\Upsilon}_{2s-2,\psi}\circ \Tb^{2i}\Big)\Psi\Big].$$
\end{prop}

\begin{proof}
The first statement follows from the chain rule and the expression $\bG_\psi({\qb})=\sum_{i=0}^{n-1}\psi\circ\pi_2(\widehat{\Tb}^{i}\circ\widehat{\qb})$ for $\qb\in \Qbb^{\circ n}$. The second one follows from 
\begin{align*}
\cG_\psi({\qb})=\sum_{i=1}^{2n} {\psi}\circ \pi_2\widetilde{\qb^{(i)}}
=\sum_{i=0}^{n-1}(\psi+\psi\circ\pi_2\widetilde{\Tb})\circ(\Tb^{2i}\circ\widehat{\qb})
\end{align*}
for $\qb\in\Qbb^{\circ 2n}$. This finishes the proof.  
\end{proof}

\subsection{Transfer operators}

In this section, to put more emphasis on the iterations, we express the transfer operators in terms of the branch operators rather than as traditional interpretations, so-called the density transformers associated to dynamical systems.

For  $s \in \Cb$ and $\psi \in \Cb^{\cha{\Gamma}}$, the transfer operator for the variable $\aG$ is written as
$\Lc_{s,\psi}=\sum_{\qb\in\Qbb}\Bc^{\qb}_{s,\psi}.$ It is a weighted transfer operator associated to the skewed Gauss map $\Tb$. Using the expression (\ref{inv:branch}), we can rewrite the operators in a more explicit way as ${\Lc}_{s,\psi}\Psi(x,v)= \sum_{m \geq 1} \frac{\exp [ \psi(v \twobytwotiny{0}{1}{1}{m} )]}{(m+x)^{2s}}
\Psi ( \frac{1}{m+x}, v \twobytwotiny{0}{1}{1}{m} )$ 
for $\Psi\in L^\infty(X)$. It can be easily observed that this series converges absolutely for $\Re(s)>1/2$. 
The transfer operator for the variable $\bG$ is defined as
\begin{equation*}
	\widehat{\Lc}_{s,\psi}:={\sum}_{\qb\in\Qbb}\widehat{\Bc}^{\qb}_{s,\psi}.
\end{equation*}
It is a weighted transfer operator associated to $\widehat{\Tb}$, which can be written as 
\[ \widehat{\Lc}_{s,\psi}\Psi(x,v)= \sum_{m \geq 1} \frac{\exp \big[ \widehat{\psi} \big(v\twobytwotiny{0}{1}{1}{m} \big) \big]}{(m+x)^{2s}}
\Psi \left( \frac{1}{m+x}, v\cdot \twobytwo{0}{1}{1}{m} \right). 
\]

Our discussions in $\S$\ref{subsec:br:op}, especially the expression (\ref{tilde:decomp:pq}) and Proposition \ref{prop:digit:iterate}, lead us to define
$${\mathcal{M}}_{s,\psi}:=\sum_{\pb\in\Qbb\circ\Qbb}\widetilde{\Bc}^{\pb}_{s,\psi}.$$
It is a weighted transfer operator associated to $\widehat{\Tb}^2=\widetilde{\Tb}^2$ which equals to the restriction of $\Tb^2$ to $I_\Gamma$. 
In an explicit way, we have 
\begin{align*}
&\mathcal{M}_{s,\psi}\Phi(x,v)=\\
&\sum_{m,n\geq 1}\frac{\exp[{\psi}(v\twobytwotiny{0}{1}{1}{m}{\rm j})+{\psi}(v\twobytwotiny{1}{n}{m}{1+mn})]}{(1+mn+mx)^s}\Phi\left(\frac{1}{n+\frac{1}{m+x}},v\twobytwo{1}{n}{m}{1+mn}\right).
\end{align*}
It can be also easily checked that the series converges absolutely for $\Re(s)>1/2$.

\begin{rem}
For the variable $\cG$, one may want to define the transfer operator for $\cG$ such that
$\widetilde{\Lc}_{s,\psi}:=\sum_{\qb\in\Qbb}\widetilde{\Bc}^{\qb}_{s,\psi}.$ 
However, due to the absence of $\cG$-analogue of (\ref{rel:inv:branch}), it seems unlikely that $L^{\cG}_{\Psi,J}(s,\psi)$ is expressible in terms of $\widetilde{\Lc}_{s,\psi}$. 
\end{rem}

\begin{rem}\label{rem:M:L:sq}
Note that $\mathcal{M}_{s,\psi}$ is not equal to $\widetilde{\Lc}_{s,\psi}^2$. However, we have $\mathcal{M}_{s,{\bf 0}}=\widehat{\Lc}_{s,{\bf 0}}^2$.
\end{rem}

\subsection{Final operators}

The final operator for $\aG$ is defined as
$$\Fc_{s,\psi}:=\sum_{\qb\in{\bf F}}\Bc^{\qb}_{s,\psi}.$$
We can also define a final operator for $\bG$ such that
$$\widehat{\Fc}_{s,\psi}:=\sum_{\qb \in {\bf F}}\widehat{\Bc}^{\qb}_{s,\psi}$$
and the final operator for $\cG$ such that
\begin{align*}
	{\widetilde{\mathcal{F}}}_{s,\psi}:=\sum_{\pb\in(\mathbf{F}\circ\Qbb)\sqcup \mathbf{F}}\widetilde{\Bc}^{\pb}_{s,\psi}.
\end{align*}

\subsection{Interval and Auxiliary operators}\label{subsec:int:op}

To deal with the distributions over intervals, we need an operator, so-called an interval operator, which corresponds to most of rational numbers in the interval; To deal with the missing rational points avoided by the interval operator, we devise an auxiliary operator. 

We first give a preliminary result on structures of intervals. Recall that
$$K^\circ(m_1,\cdots,m_n)=\{[0;m_1,\cdots,m_n+x]\,|\,0< x<1\}.$$
Observe that 
\begin{align}\label{decomp:fund:int}
K^\circ(m_1,\cdots,m_n)=\bigsqcup_{k=1}^\infty K^\circ(m_1,\cdots,m_n,k)\sqcup  \big\{[0;m_1,\cdots,m_n,k]\,\big|\,k>1\big\}.
\end{align}
For an integer $n\geq 1$, we define a collection ${\rm A}_n'$ of open fundamental intervals inductively as follows: 
\begin{enumerate}[leftmargin=*]
\item Let ${\rm A}_1'$ be the collection of (consecutive) open fundamental intervals of depth $1$ that are included in $J$. 
\item Let ${\rm A}_j'$ be defined for $1\leq j\leq n$. Then, ${\rm A}_{n+1}'$ is the collection of open fundamental intervals of depth $n+1$ that are included in $J\setminus \bigcup_{j=1}^n \bigcup_{K\in {\rm A}_j'}K$.  
\end{enumerate}
Obviously ${\rm A}_n'\neq \varnothing$ for some $n$.  

The following is useful when we discuss the convergence of interval and auxiliary operators.

\begin{prop}\label{prop:An}
Let $J=(a,b)\subseteq (0,1)$. Let $a=[0;u_1,u_2,\cdots]$ and $b=[0;v_1,v_2,\cdots]$ be the (possibly finite) continued fraction expansions. 
When $n$ is even, 
$${\rm A}_n'\subseteq\{K^\circ(u_1,\cdots,u_{n-1},k)\,|\,k\geq u_{n}+1\}\cup \{K^\circ(v_1,\cdots,v_{n-1},k)\,|\,1\leq k\leq v_{n}\}.$$
When $n$ is odd,
$${\rm A}_n'\subseteq\{K^\circ(u_1,\cdots,u_{n-1},k)\,|\,1\leq k\leq u_{n}\}\cup \{K^\circ(v_1,\cdots,v_{n-1},k)\,|\,k\geq v_{n}+1\}.$$
\end{prop}

\begin{proof}

Let $c\in (0,1)$ have a (possibly finite) continued fraction expansion $c=[0;m_1,m_2,\cdots]$ and $\frac{P_n}{Q_n}$ be the $n$-th convergent of $c$. It is well-known that for any $n,m\geq 1$, we have $\frac{P_{2n}}{Q_{2n}}\leq c\leq \frac{P_{2m-1}}{Q_{2m-1}}.$

Let $n>1$ be an even integer. The leftmost open fundamental intervals of depths $n-1$ and $n$ that are included in an interval  $\left(c,1\right)$, are
$$K^\circ(m_1,\cdots,m_{n-1})\mbox{ and }K^\circ(m_1,\cdots,m_{n-1},m_{n}+1)\mbox{, respectively}.$$
Note that the left end points of these intervals are $\frac{P_{n-1}}{Q_{n-1}}$ and $\frac{P_n+P_{n-1}}{Q_n+Q_{n-1}}$, respectively. Then, it can be easily seen that the open fundamental intervals of the depth $n$ that are included in an interval $\Big(c,\frac{P_{n-1}}{Q_{n-1}}\Big)$, are $K^\circ(m_1,\cdots,m_{n-1},k)$ for $k\geq m_{n}+1.$

The rightmost open fundamental intervals of depths $n-1$ and $n$ that are included in an interval $(0,c)$, are
$$K^\circ(m_1,\cdots,m_{n-2},m_{n-1}+1)\mbox{ and }K^\circ(m_1,\cdots,m_n), \mbox{ respectively}.$$
Note that the right end points of these intervals are $\frac{P_{n-1}+P_{n-2}}{Q_{n-1}+Q_{n-2}}$ and $\frac{P_n}{Q_n}$, respectively.
Then, the open fundamental intervals of the depth $n$ that are included in an interval $\Big(\frac{P_{n-1}+P_{n-2}}{Q_{n-1}+Q_{n-2}},c\Big)$, are $K^\circ(m_1,\cdots,m_{n-1},k)$ for $1\leq k\leq m_{n}.$
In sum, we obtain the statements.

The arguments for odd $n$ are similar.
\end{proof}

For an open fundamental interval $K=K^\circ(m_1,\cdots,m_n)$, we define 
$${\qb}_K:=\qb_{m_n}\circ\cdots\circ \qb_{m_1}.$$
Later, we shall only need fundamental intervals of even length (see Remark \ref{rem:even:depth}).  Having in mind
the identity (\ref{decomp:fund:int}), let us set
$${\rm A}_n:=\begin{cases}
\phantom{blaaaaank}{\rm A}_n'&\mbox{ if $n$ is even}\\
\{(K,k)\,|\, K\in {\rm A}_n',k\geq 1\}&\mbox{ if $n$ is odd}
\end{cases}
$$
where $(K,k):=K^\circ(m_1,\cdots,m_n,k)$ for $K=K^\circ(m_1,\cdots,m_n)$.
Then, we set
$${\bf Q}_J:=\bigg\{{\qb}_{K}\,\bigg|K\in \bigcup_{n\geq1}{\rm A}_n\bigg\}.$$
In particular, we have $\mathbf{Q}_K=\{\qb_K\}$ for an open fundamental interval $K$ if $K$ is of even depth and $\Qbb_K=\{\qb_k\circ\qb_K\,|\,k\geq 1\}=\Qbb\circ\qb_K$ if $K$ is of odd depth. Therefore, all the branches in $\Qbb_J$ are of even depth.

Let us set
$${U}_J:=\bigcup_{n\geq1}\bigcup_{K\in {\rm A}_n}K\mbox{ and }{V}_J:=J\setminus U_J.$$
Note that $V_J$ is a countable set of rational numbers that consists of the endpoints of each $K\in\bigcup_{n\geq 1}{\rm A}_n$  except the boundaries of $J$.
Let us set
$$\partial \Qbb_J:=\{\qb_r\,|\,r\in V_J\}.$$

Let $s\in\Cb$ and $\eta\in \Cb^{\cha{\Gamma}}$ with sufficiently large $\Re(s)$ and small $\max|\eta|$.
We define an {\it interval operator} and an auxiliary operator for $\aG$ as
\begin{align}\label{DJ:s:w}
\mathcal{D}_{s,\psi}^{J}:= \sum_{{\bf q} \in \mathbf{Q}_J} \Bc^{\qb}_{s,\psi}\mbox{ and }\mathcal{K}_{s,\psi}^J:=\sum_{\qb\in \partial \Qbb_J}\Bc^{\qb}_{s,\psi},\mbox{ respectively}.
\end{align}
Similarly, we define an {\it interval operator}  and an auxiliary operator for $\bG$ as
\begin{align}\label{DJhat:s:w}
\widehat{\mathcal{D}}_{s,\psi}^{J}:= \sum_{{\bf q} \in \mathbf{Q}_J} \widehat{\Bc}^{\qb}_{s,\psi}\mbox{ and }\widehat{\mathcal{K}}_{s,\psi}^J:=\sum_{\qb\in \partial \Qbb_J}\widehat{\Bc}^{\qb}_{s,\psi},\mbox{ respectively}.
\end{align}
We also define an {\it interval operator} and an auxiliary operator for $\cG$ as
\begin{align}\label{DJtilde:s:w}
	\widetilde{\mathcal{D}}_{s,\psi}^{J}:= \sum_{\qb\in\Qbb_J} \widetilde{\Bc}^\qb_{s,\psi}\mbox{ and }\widetilde{\mathcal{K}}^J_{s,\psi}:=\sum_{\qb\in\partial \Qbb_J}\widetilde{\Bc}^\qb_{s,\psi},\mbox{ respectively}.
\end{align}
These operators are well-defined for $\Re(s)>\frac{1}{2}$ as follows:

\begin{prop}\label{prop:DJ:conv}
The series in (\ref{DJ:s:w}), (\ref{DJhat:s:w}), and (\ref{DJtilde:s:w}) are uniformly convergent for $\Re(s)\geq\sigma_0$ for any $\sigma_0>\frac{1}{2}$. Hence, they are analytic in the region $\Re(s)>\frac{1}{2}$. 
\end{prop}

\begin{proof}
We first consider the interval operator in (\ref{DJ:s:w}). Let $J=(a,b)$. Let $\frac{p_n}{q_n}$ and $\frac{P_n}{Q_n}$ be the convergents of $a$ and $b$, respectively. From Proposition \ref{prop:An}, we obtain 
 for $\Re(s)=\sigma>\frac{1}{2}$ and a bounded function $\Psi$ that
\begin{align*}
\sum_{K\in {\rm A}_n}\Bc_{s,\psi}^{\qb_K}\Psi\ll&\sum_{k=1}^\infty \frac{1}{(q_{n-1}k+q_{n-2})^{2\sigma}}+\frac{1}{(Q_nk+Q_{n-1})^{2\sigma}}\\
\ll& \frac{1}{2\sigma-1}\left(\frac{1}{q_n^{2\sigma}}+\frac{1}{Q_n^{2\sigma}}\right).
\end{align*}
Hence, we obtain
$$\mathcal{D}_{s,\psi}^J\Psi\ll \frac{1}{2\sigma-1}\left(\sum_{n}\frac{1}{q_n^{2\sigma}}+\frac{1}{Q_n^{2\sigma}}\right).$$
The latter sum is a finite sum, or a convergent series for $\sigma>\frac{1}{2}$ since $q_n,Q_n\geq n$. 

For (\ref{DJhat:s:w}), it suffices to observe that for a bounded function $\Phi$ and a branch $\qb$, we have
$$\|\widehat{\mathcal{B}}^\qb_{s,\psi}\Phi\|_0\leq \|{\mathcal{B}}^\qb_{\sigma,\eta_0}{\bf 1}\|_0\cdot \|\Phi\|_0$$
where $\eta_0$ is the constant function $\max|\Re\psi|$ and $\mathbf{1}=1\otimes 1$. By following the previous calculation for $\mathcal{D}^J_{s,\psi}$, we obtain the statement. A similar argument is applied to (\ref{DJtilde:s:w}).

For the auxiliary operators, we observe that for $r\in V_J$, there are at most two $K$, $K'$ in $\cup_n {\rm A}_n$ such that $r$ is a common endpoint of them.  Then, $\qb_r=\qb_K$ or $\qb_{K'}$. Hence, we can obtain the statement from an observation that $\|\mathcal{K}^J_{s,\psi}\Psi\|_0\leq 2\|\mathcal{D}^J_{\sigma,\eta}|\Psi|\|_0$ for a bounded function $\Psi$. Discussion for the other operators are similar. 
\end{proof}

\begin{rem}\label{rem:even:depth}
	One can define the operators $\mathcal{D}^J_{s,\psi}$, $\widehat{\mathcal{D}}^J_{s,\psi}$, $\mathcal{K}^J_{s,\psi}$, $\widehat{\mathcal{K}}^J_{s,\psi}$ by using ${\rm A}_n'$ as well. However, we have no choice but to use ${\rm A}_n$ to define the operators for $\cG$ since Proposition \ref{prop:digit:iterate} holds only for branches of even depth.
\end{rem}


\subsection{Key relations for Dirichlet series}

In this subsection, we present an underlying connection between the transfer operators of the skewed Gauss dynamical systems and the Dirichlet series. 

Recall $\psi=\eta+i\zeta$. Observe that for $\Re(s)>1+\frac{c\max|\eta|}{2}$ (for the definition of $c$, see (\ref{perron:am})), the Dirichlet series for $\aG$ can be written as
\begin{align}\label{L:Psi:series:rep}
L^\aG_{\Psi,J}(s,\psi) &= \sum_{r \in \Qb\cap J} \frac{  \Psi(r^*,\Gamma g(r))\exp(\aG_\psi(r))}{Q(r)^{s}}.
\end{align}
One can easily obtain a similar expressions for $\bG$ and $\cG$. 

In Theorem \ref{bound:dolgopyat}, it will be shown that quasi-inverses $(\mathcal{I}-\Lc_{s,\psi})^{-1}$, $(\mathcal{I}-\widehat{\Lc}_{s,\psi})^{-1}$, and $(\mathcal{I}-\mathcal{M}_{s,\psi})^{-1}$ are well-defined as geometric series of the operators when $(\sigma,\eta)$ is close enough to $(1,{\bf 0})$; in the remaining part of this section, let us assume this condition. Then, a portion of the sum (\ref{L:Psi:series:rep}) can be described as follows:

\begin{prop}\label{iter:repr}
For an open fundamental interval $K$ and a bounded function  $\Psi$ on ${I}\times\Gamma\backslash\GL_2(\Zb)$, we obtain 
\[
\mathcal{D}^K_{s,\psi}(\mathcal{I}-\Lc_{s,\psi})^{-1}\mathcal{F}_{s,\psi} \Psi(0,\Gamma)=\sum_{r\in \Qb\cap K}\frac{\Psi(r^*,\Gamma g(r))\exp(\aG_\psi(r))}{Q(r)^{2s}}.
\]
For a bounded function $\Phi$ on ${I}\times\Gamma\backslash\SL_2(\Zb)$, we also have
\begin{align*}
\widehat{\mathcal{D}}^K_{s,\psi}(\mathcal{I}-\widehat{\Lc}_{s,\psi})^{-1}\widehat{\mathcal{F}}_{s,\psi} \Phi(0,\Gamma)&=\sum_{r\in \Qb\cap K}\frac{\Phi(r^*,\Gamma \widehat{g}(r))\exp(\bG_\psi(r))}{Q(r)^{2s}}\mbox{ and}\\
\widetilde{\mathcal{D}}^K_{s,\psi}(\mathcal{I}-\mathcal{M}_{s,\psi})^{-1}{\widetilde{\mathcal{F}}}_{s,\psi} \Phi(0,\Gamma)&=\sum_{r\in \Qb\cap K}\frac{\Phi(r^*,\Gamma \widetilde{g}(r))\exp(\cG_\psi(r))}{Q(r)^{2s}}.
\end{align*}
\end{prop}

\begin{proof}
Let $n\geq 0$ be an integer. From Proposition \ref{prop:digit:iterate}, we obtain
\begin{align*}
\mathcal{D}^K_{s,\psi}\Lc_{s,\psi}^{n}\mathcal{F}_{s,\psi} \Psi(0,\Gamma)=\sum_{\qb\in \mathbf{F}\circ  \Qbb^{\circ n} \circ \mathbf{Q}_K}\Bc_{s,\psi}^\qb\Psi(0,\Gamma).
\end{align*}
Since $r\mapsto \qb_r$ is a one-to-one correspondence between $K\cap \Qb$ and $\mathbf{F}\circ\Qbb^\infty\circ\Qbb_K$, and $Q(r)=Q(r^*)$, we obtain the statement by Corollary \ref{Bqr:Dapsi}. A proof for the second statement is similar. 
For the third statement, with Proposition \ref{prop:digit:iterate},  we have
$$\widetilde{\mathcal{D}}^K_{s,\psi}\mathcal{M}_{s,\psi}^{n}\widetilde{\mathcal{F}}_{s,\psi} \Phi(0,\Gamma)={\sum}_{\qb}\widetilde{\Bc}_{s,\psi}^{\qb}\Phi(0,\Gamma)$$
where $\sum_\qb$ means the summation over $\qb\in \mathbf{F}\circ(\Qbb^{\circ 2n}\circ\Qbb_K)\bigsqcup (\mathbf{F}\circ\Qbb)\circ (\Qbb^{\circ 2n}\circ\Qbb_K).$  By the correspondence $\Qb\cap K\rightarrow \mathbf{F}\circ \Qbb^{\infty} \circ \Qbb_{K}$, we obtain the statement from Corollary \ref{Bqr:Dapsi}. \end{proof}

Finally, we settle the following explicit expressions for the Dirichlet series:

\begin{thm} \label{keyrelation}
Let $\Psi$ be a bounded function on ${I}\times\Gamma\backslash\GL_2(\Zb)$, $\Phi$ a bounded function on $I_\Gamma$, and $\Re(s)>1+\frac{c\max|\eta|}{2}$. We have
\begin{align*}
L^{\aG}_{\Psi,J}(2s;\varphi)&= \mathcal{K}_{s,\psi}^J\Psi(0,\Gamma)+\mathcal{D}_{s,\psi}^J(\mathcal{I}-\mathcal{L}_{s,\psi})^{-1} \Fc_{s,\psi}\Psi(0,\Gamma),\\ 
L^\bG_{\Phi,J}(2s;\psi)&= \widehat{\mathcal{K}}_{s,\psi}^J\Phi(0,\Gamma)+\widehat{\mathcal{D}}_{s,\psi}^J(\mathcal{I}-\widehat{\mathcal{L}}_{s,\psi})^{-1} \widehat{\Fc}_{s,\psi}\Phi(0,\Gamma),\mbox{ and}\\
L^{\cG}_{\Phi,J}(2s;\psi)&=\widetilde{\mathcal{K}}_{s,\psi}^J\Phi(0,\Gamma)+\widetilde{\mathcal{D}}_{s,\psi}^J(\mathcal{I}-\mathcal{M}_{s,\psi})^{-1}\widetilde{\mathcal{F}}_{s,\psi}\Phi(0,\Gamma). 
\end{align*}
\end{thm}

\begin{proof}
From the correspondence ${V}_J\rightarrow \partial\Qbb_J$,  similarly as the last proof, we get
\begin{align*}
\mathcal{K}_{s,\psi}^{J}\Psi(0,\Gamma)=\sum_{r\in V_J}\frac{ \Psi(r^*, \Gamma g(r))\exp(\aG_\psi(r))}{Q(r)^{2s}}.
\end{align*}
We also have similar expressions for $\bG$ and $\cG$. Note that there are one-to-one correspondences 
$\Qb\cap U_J\rightarrow \big(\mathbf{F}\circ \Qbb^{\infty} \circ \mathbf{Q}_J\big)$ and $V_J\rightarrow \partial \Qbb_J$,  given by $r\mapsto \qb_r.$
Now Proposition \ref{iter:repr} and the disjoint union $J=U_J \bigsqcup V_J$ enable us to conclude the proof.
\end{proof}


\begin{rem}\label{atkin:lehner:rem}
Instead of using the interval operator, one might want to choose $\Psi$ as a product of a smooth approximation of $J$ and the function $\varphi$ to study $\Omega_{M,\varphi,J}$. A problem is that there is no known relation between $\cG_\psi(r)$ and $\cG_\psi(r^*)$, in general. However, there is a relation between $\mG_f^\pm(r)$ and $\mG_f^\pm(r^*)$, so-called the Atkin--Lehner relation (See Mazur--Rubin \cite{mazur:rubin}). We had tried this direction and were only able to obtain a partial and unsatisfactory result. One advantage of introducing the interval operator is that the Atkin--Lehner relation is dispensable.
\end{rem}

\section{Spectral analysis of transfer operator}\label{sect:spec:transf}

In this section, we present a dynamical analysis on the transfer operator associated to the skewed Gauss dynamical system.

\subsection{Basic settings and properties}\label{subsec:basic:prop}

For the remaining part of the paper, we write $s:=\sigma+it \in \Cb$ with $\sigma,t\in\Rb$. 
In order to discuss all the modular partition functions simultaneously, let us set 
$$\mathfrak{g}:=\aG,\bG,\mbox{ or }\cG.$$
We use the symbol $\Hc_{s,\psi}$ to represent
$$\Hc_{s,\psi}:=\Lc_{s,\psi},\widehat{\Lc}_{s,\psi},\mbox{or }{\mathcal{M}}_{s,\psi}$$
according to the choice of $\mathfrak{g}$. Since $\bG_\psi$ and $\cG_\psi$ are also functions on the inverse branches of $\widehat{\Tb}$ and $\widetilde{\Tb}^2$, respectively (Remark \ref{rem:br:bGG} and \ref{rem:cr:cGG}), let us use the symbol $\bf B$ to represent the branches:
\begin{align*}
\mathbf{B}:=\Qbb,\widehat{\Qbb},\mbox{ or }\widetilde{\Qbb^{\circ 2}}
\end{align*}
according to the choice of $\mathfrak{g}$. Note that for $\Psi\in C^1({X})$, the transfer operator can be written as
$$\Hc_{s,\psi}\Psi=\sum_{\pb\in\mathbf{B}}\exp[\mathfrak{g}_\psi(\pb)]|\partial\pi_1\pb|^{s}\Psi\circ\pb.$$
Note here that $\mathfrak{a}_\psi(\qb)=\psi\circ\pi_2\qb$ and $\mathfrak{b}_\psi(\qb)={\psi}\circ\pi_2\widehat{\qb}$ for a $\qb\in\Qbb$. For $\pb=\pb_2\circ\pb_1\in \widetilde{\Qbb^{\circ 2}}$, we get $\cG_\psi(\pb)={\psi}\circ\pi_2\widetilde{\pb}_1+\psi\circ\pi_2\widetilde{\pb}.$


Recall that a space on which $\Hc_{s,\psi}$ acts is defined as $C^1({{X}}) = \{ \Psi: {{X}} \rightarrow \Cb \ | \ \Psi \ \mbox{and } \partial\Psi \ \mbox{are \emph{continuous}} \}$ 
where the derivative $\partial$ on $C^1({{X}})$ is defined by the partial derivative with respect to the first coordinate $\partial\Psi(x,v) := \frac{\partial}{\partial x} \Psi(x,v) .$ The space $C^1({{X}})$ is just a finite union of $C^1(I)$ and its elements are the linear combinations of tensor type $(f \otimes g)(x,v):=f(x)g(v)$ for a function $f$ on $I$ and a function $g$ on the set of right cosets of $\Gamma$. It is a Banach space with the norm 
$$\|\Psi\|_1 =\|\Psi\|_0 + \| \partial \Psi\|_0.$$ It is easy to show that the operator acts boundedly: For $\Psi \in C^1({{X}})$ and $(\sigma,\eta)$ in a small compact real neighborhood $B\subseteq \Rb\times\Rb^{\cha{\Gamma}}$ of $(1, \bf 0)$, we have
\begin{align}\label{bnded:op:H}
\| \Hc_{\sigma,\eta}\Psi\|_1 \ll_B  \|\Psi\|_1.
\end{align}

\subsection{Geometric properties of skewed Gauss dynamical system}

We study the spectrum of our transfer operator in the later section. Remark that this will be settled by the metric properties of the set $\Qbb$ of inverse branches of $\Tb$ based on the following geometric properties of the Gauss dynamical system.

Let us set the {\it contraction ratio} as
$$\rho:=\begin{cases}
1/2&\mbox{ if }\mathfrak{g}=\aG\mbox{ or }\bG\\
1/4&\mbox{ if }\mathfrak{g}=\cG
\end{cases}.
$$
We remark that $\rho$ from Proposition \ref{main:dynamics} is given by the contraction ratio.

\begin{prop}[Baladi-Vall\'ee \cite{bv}] \label{prop:gaussmap}
	For any branch $\qb\in{\bf B}^{\circ n}$ for $n \geq 1$, we have:
	\begin{enumerate}[leftmargin=*]
		\item \emph{(Uniform contraction)}  $$\|\partial \pi_1\qb\|_0 \ll \rho^n.$$
		\item \emph{(Bounded distortion)} $$\ds\left\| \frac{\partial^2\pi_1\qb}{\partial \pi_1\qb} \right\|_0 \ll1.$$
	\end{enumerate}
\end{prop}  

\begin{proof}
The is a mere translation of results in Baladi-Vall\'{e}e \cite[Section 2.2]{bv} or Naud \cite[Lemma 3.5]{naud} in terms of our notations.
\end{proof}

We recall the UNI property for the Gauss dynamical system established by Baladi--Vall\'ee \cite{bv}. For any $n \geq 1$ and for two inverse branches $\pb$ and $\qb$ of $\Tb^n$, the temporal distance is defined by
\begin{equation*}
\Delta(\pb,\qb):=\inf_{{X}}|\partial\Pi_{\pb,\qb}| 
\end{equation*}
where the map $\Pi_{\pb,\qb}$ on ${X}$ is given by 
\begin{equation*}
\Pi_{\pb,\qb}:=\log \frac{|\partial\pi_1\pb|}{|\partial\pi_1\qb|}.
\end{equation*}
What they showed can be written as:

\begin{prop}[Baladi--Vall\'ee {\cite[Lemma 6]{bv}}] \label{uni}
The skewed Gauss dynamical systems satisfy the UNI condition:
\begin{enumerate}[leftmargin=*]
\item\label{uni:a} Let $m$ be the product of Lebesgue measure on ${I}$ and the counting measure on the right cosets of $\Gamma$. For $0<a<1$, $n \geq 1$, and $\pb \in \Qbb^{\circ n}$, we have  
 \[ m \bigg( \bigcup_{\qb\in\Qbb^{\circ n} \atop \Delta(\pb,\qb) \leq \rho^{an}} \qb(X)   \bigg) \ll \rho^{an} \] 
 where the implicit constant is independent of $a$, $n$, and $\pb$.
\item\label{uni:b} One has 
$$\sup_{\pb,\qb\in\Qbb^{\infty}}\|\partial^2\Pi_{\pb,\qb}\|_0 < \infty.$$

\end{enumerate}
\end{prop}



\subsection{Dominant eigenvalue and spectral gap of positive operator}

We describe the spectrum of positive transfer operator $\Hc_{\sigma,\eta}$ acting on ${C^1}({{X}})$. 
We begin by stating the following sufficient condition for quasi-compactness due to Hennion.

\begin{thm}[Hennion \cite{hennion}] \label{hennion}
Let $\Hc$ be a bounded operator on a Banach space $X$, endowed with two norms $\|\cdot\|$ and $\|\cdot\|'$ satisfying $\Hc(\{\phi \in X: \|\phi \| \leq 1 \})$ is conditionally compact in $(X,\|\cdot\|')$. Suppose that there exist two sequences of real numbers $r_n$ and $t_n$ such that for any $n \geq 1$ and $\phi \in X$, one has the inequalities
\begin{equation} \label{eq:hennion}
\|\Hc^n \phi \| \leq t_n \|\phi\|'+ r_n \|\phi\|.
\end{equation}
Then the essential spectral radius of $\Hc$ is at most $\liminf_{n \rightarrow \infty} r_n^{1/n}$.
\end{thm}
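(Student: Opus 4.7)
The natural route is via Nussbaum's formula for the essential spectral radius of a bounded operator $T$ on $(X,\|\cdot\|_1)$, namely
\[ r_{ess}(T) = \lim_{n\to\infty} \gamma(T^n)^{1/n}, \]
where $\gamma(T)$ denotes the Kuratowski (equivalently, ball) measure of non-compactness of $T(\{\phi:\|\phi\|_1\leq 1\})$ in $(X,\|\cdot\|_1)$. Granting this formula, the theorem reduces to the estimate $\gamma(\Lc^n)\leq C\, r_{n-1}$ for some constant $C$ independent of $n$, from which $r_{ess}(\Lc)\leq \liminf_{n\to\infty} r_n^{1/n}$ follows immediately.

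The heart of the argument is then to extract that estimate from the Lasota--Yorke type inequality (\ref{eq:hennion}) using the compactness hypothesis as a bridge between the two norms. Fix $n\geq 1$ and $\eta>0$. Since $\Lc(B_1)$, with $B_1=\{\phi:\|\phi\|_1\leq 1\}$, is precompact in $(X,\|\cdot\|_2)$, one can choose a finite $\eta$-net: representatives $\phi_1,\ldots,\phi_N\in B_1$ such that for every $\phi\in B_1$ there is some $i$ with $\|\Lc(\phi-\phi_i)\|_2\leq \eta$. Also $\|\Lc(\phi-\phi_i)\|_1\leq 2\|\Lc\|$. Applying the hypothesis to $\Lc(\phi-\phi_i)$ with $n-1$ further iterations,
\[ \|\Lc^n(\phi-\phi_i)\|_1 \leq t_{n-1}\|\Lc(\phi-\phi_i)\|_2 + r_{n-1}\|\Lc(\phi-\phi_i)\|_1 \leq t_{n-1}\eta + 2r_{n-1}\|\Lc\|. \]
Thus $\Lc^n(B_1)$ is covered by $N$ balls of radius $t_{n-1}\eta + 2r_{n-1}\|\Lc\|$ in $\|\cdot\|_1$, and letting $\eta\to 0$ gives $\gamma(\Lc^n)\leq 2r_{n-1}\|\Lc\|$. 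Taking $n$-th roots and passing to the liminf yields the desired bound on $r_{ess}(\Lc)$.

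The step I expect to be the genuine obstacle is not this covering argument, which is essentially forced by the structure of the hypotheses, but the invocation of Nussbaum's formula itself: that identity requires some nontrivial functional analysis (characterisation of the essential spectrum via Fredholm operators in the Calkin algebra, and the fact that compact perturbations do not change the essential spectrum). An alternative, more hands-on route avoids Nussbaum and instead proves directly that each $\lambda$ in the spectrum with $|\lambda|>\liminf r_n^{1/n}$ is an isolated eigenvalue of finite multiplicity: for such $\lambda$, the operators $(\lambda-\Lc)|_{V}$ on generalised eigenspaces $V$ must, by iterating (\ref{eq:hennion}), satisfy a contraction-modulo-compact estimate which forces $V$ to be finite-dimensional and isolated via a Riesz--Schauder style argument. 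Either route leads to the stated conclusion.
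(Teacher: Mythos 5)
The paper cites Theorem~\ref{hennion} from Hennion~\cite{hennion} without reproducing a proof, so there is no in-paper argument to compare against; judged on its own, your proposal is correct and is in fact essentially Hennion's own proof. The two ingredients you identify — Nussbaum's formula $r_{\mathrm{ess}}(T)=\lim_{n}\gamma(T^n)^{1/n}$ for the essential spectral radius in terms of the ball measure of non-compactness, and the finite-net argument that transports compactness in $\|\cdot\|_2$ into a decay of $\gamma(\Lc^n)$ via (\ref{eq:hennion}) — are precisely the ones Hennion uses. Your covering estimate is sound: precompactness of $\Lc(B_1)$ in $(X,\|\cdot\|_2)$ gives a finite $\eta$-net $\phi_1,\dots,\phi_N\in B_1$, and applying (\ref{eq:hennion}) with $n-1$ iterates to $\psi=\Lc(\phi-\phi_i)$ yields $\|\Lc^n(\phi-\phi_i)\|_1\le t_{n-1}\eta+2r_{n-1}\|\Lc\|$, so letting $\eta\to0$ gives $\gamma(\Lc^n)\le 2\|\Lc\|\,r_{n-1}$ for $n\ge 2$; since $(2\|\Lc\|)^{1/n}\to1$ and $\liminf_n r_{n-1}^{1/n}=\liminf_n r_n^{1/n}$, Nussbaum's formula finishes the argument.

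One caution worth recording: what this argument (and Hennion's original theorem) delivers is the non-strict inequality $r_{\mathrm{ess}}(\Lc)\le\liminf_n r_n^{1/n}$, whereas the paper's statement reads ``strictly less than.'' That is a slip in the paper's transcription, not a gap in your proof; the non-strict version is what Hennion proves and is all that is used in Proposition~\ref{lasota:yorke}, where $r_n=\rho^n$ with $\rho<1$ and one only needs $r_{\mathrm{ess}}(\Lc_{\sigma,\xbb})\le\rho<\lambda_{\sigma,\xbb}$ to conclude quasi-compactness. Your closing remark correctly flags Nussbaum's formula as the one genuinely nontrivial functional-analytic input; the alternative Riesz--Schauder route you sketch would also work but requires more care in isolating the spectrum outside the critical radius.
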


Inequalities of the form (\ref{eq:hennion}) are often called the \emph{Lasota--Yorke} type in the theory of dynamical system. This enables us not only to show quasi-compactness of $\Hc_{\sigma,\eta}$ on ${C^1}({{X}})$, but also to obtain an explicit estimate for the iterates, which is a crucial ingredient for the uniform spectral bound in $\S$\ref{sect:dolgopyat}.

The main estimate for norms of our transfer operators is controlled by geometric behavior of inverse branches of the Gauss map. For example, we have:

\begin{prop} \label{lasota:yorke}
Let $B$ be the neighborhood of $(1,{\bf 0})$ for (\ref{bnded:op:H}). Choosing $B$ small enough, if needed, for $(\sigma,\eta)\in B$: 
\begin{enumerate}[leftmargin=*]
\item \label{ineq:partial:Hc} For any $n \geq 1$ and $\Psi \in {C^1}({{X}})$, we have
\[ \| \partial \Hc_{\sigma,\eta}^n \Psi \|_0 \ll_{B}  |\sigma| \|\Psi\|_0 +\rho^n \|\partial \Psi\|_0 .\]
\item The operator $\Hc_{\sigma,\eta}$ on ${C^1}({{X}})$ is quasi-compact.
\end{enumerate}
\end{prop}

\begin{proof} 

By Proposition \ref{prop:digit:iterate}, we notice that for $n \geq 1$, the iteration is of the form
\[ \Hc_{\sigma,\eta}^n \Psi = \sum_{\pb \in \mathbf{B}^{\circ n}} \exp[\mathfrak{g}_\eta(\pb)] |\partial\pi_1\pb|^\sigma  \Psi \circ \pb. \]
Then the differentiation gives 
\begin{align*}
\partial \Hc_{\sigma,\eta}^n \Psi &= \sum_{\pb} \exp[\mathfrak{g}_\eta(\pb)] \left(\sigma |\partial\pi_1\pb|^{\sigma} \frac{|\partial^2\pi_1\pb|}{|\partial\pi_1\pb|} \cdot \Psi \circ \pb + |\partial\pi_1\pb|^{\sigma} \partial \pi_1\pb \cdot \partial \Psi \circ \pb \right).
\end{align*}
By the uniform contraction in Proposition \ref{prop:gaussmap}, we obtain the first statement.

Notice that the embedding of $({C^1}({{X}}), \|\cdot\|_1)$ into $({C^1}({{X}}), \|\cdot\|_0)$ is a compact operator, since $\Gamma$ is of finite index in $\SL_2(\Zb)$. Hence by Theorem \ref{hennion} and the first statement, we have the second one. 
\end{proof}

We collect spectral properties of $\Hc_{\sigma,\eta}$. We mainly refer to Baladi \cite[Theorem 1.5]{baladi} for a general theory.

\begin{prop} \label{op:dominant}
Let $B$ be as before. For $(\sigma, \eta) \in {B}$, let us set
$$\lambda_{\sigma,\eta}:=\lim_{n\rightarrow\infty}\|\Hc_{\sigma,\eta}^n{\bf 1}\|_0^{\frac{1}{n}}.$$
Then, we have:
\begin{enumerate}[leftmargin=*]
\item The value $\lambda_{\sigma,\eta}$ is the spectral radius of $\Hc_{\sigma,\eta}$ on $C^1(X)$ with $\|\cdot\|_1$.
\item \label{pos:eigen}The operator $\Hc_{\sigma,\eta}$ has a positive eigenfunction $\Phi_{\sigma,\eta}$ with the eigenvalue $\lambda_{\sigma,\eta}$. In particular, $\Phi_{1,\bf 0}(x,v)=\frac{1}{\log 2(x+1)}$ and $\lambda_{1,\bf 0}=1$.
\item The eigenvalue $\lambda_{\sigma,\eta}$ is of maximal modulus, positive, and simple.
\item There is an eigenmeasure $\mu_{\sigma,\eta}$ of the adjoint of $\Hc_{\sigma,\eta}$ such that it is a Borel probability measure with $\int_{{X}}\Phi_{\sigma,\eta}d\mu_{\sigma,\eta}=1$ after normalizing $\Phi_{\sigma,\eta}$ suitably. In particular, $\mu_{1,\bf 0}$ is equivalent to Lebesgue measure.
\end{enumerate}
\end{prop}

\begin{proof}
Even though it is almost same as discussion in Baladi \cite[Theorem 1.5]{baladi}, let us give a sketch of proof for reader's convenience. See also Baladi--Vall\'ee \cite{bv} and Parry--Pollicott \cite{parry:pollicott2} for more details.

(1) Using Proposition \ref{lasota:yorke}.(\ref{ineq:partial:Hc}), it can be shown that the spectral radius of $\Hc_{\sigma,\eta}$ on $C^1(X)$ with $\|\cdot\|_1$ is less than or equal to $\lambda_{\sigma,\eta}$. We also have
\[ 
\lim_{n \rightarrow \infty} \|\Hc_{\sigma,\eta}^n\|_1^{1/n} \geq  \lim_{n \rightarrow \infty}   \|\Hc_{\sigma,\eta}^n \mathbf{1} \|_1^{1/n}  \geq  \lim_{n \rightarrow \infty} \|\Hc_{\sigma,\eta}^n \mathbf{1} \|_0^{1/n},    
\]
which implies the statement.

(2) Let $\lambda_0=\lambda_{\sigma,\eta}$, $\lambda_1$, $\cdots$, $\lambda_\ell$ be the distinct eigenvalues of maximal modulus. Then, by spectral projection, there exist $\Psi$, $\Psi_j\in C^1({X})$ such that ${\bf 1}=\Psi+\sum_{j=0}^\ell\Psi_j$, $\|\Hc_{\sigma,\eta}^n\Psi\|_1=o(\lambda_{0}^n)$, and $\Psi_j$ is in the generalized eigenspace for $\lambda_j$. By observing the Jordan normal form of $\Hc_{\sigma,\eta}$ on the generalized eigenspace, it can be shown that there exists an integer $k>0$ such that for each $j=0,\cdots,\ell$, the following limits exist
$$\lim_{n\rightarrow\infty}\frac{1}{\lambda_j^n n^k}\Hc_{\sigma,\eta}^n\Psi_j=:\Phi_j.$$
Then $\Hc_{\sigma,\eta}\Phi_j=\lambda_j\Phi_j$ for each $j=0,\cdots,\ell$  and at least one of $\Phi_j$ is not trivial. In sum, we obtain
$$0\leq \frac{\Hc_{\sigma,\eta}^n{\bf 1}}{\lambda_{0}^nn^k}=o(1)+\sum_{j}\left(\frac{\lambda_j}{\lambda_{0}}\right)^n\frac{\Hc_{\sigma,\eta}^n\Psi_j}{\lambda_j^n n^k}.$$
From this inequality, using a version of orthogonality relation: 
$$\frac{1}{M}\sum_{n=1}^M\left(\frac{\lambda_j}{\lambda_0}\right)^n=\begin{cases}
~~1&\mbox{ if }\lambda_j=\lambda_0\\
o(1)&\mbox{ otherwise}
\end{cases},
$$
we can deduce that the function $\Phi_0$ is non-negative and non-trivial.

Suppose that $\Phi_0(x_0,v_0)=0$ for some $(x_0,v_0)\in {X}$. For each $n\geq 1$, we have
$$0=\Hc_{\sigma,\eta}^n\Phi_0(x_0,v_0)=\sum_{\pb\in\mathbf{B}^{\circ n}}\exp[\mathfrak{g}_\eta(\pb)]|\partial\pi_1\pb(x_0,v_0)|^\sigma \Phi_0(\pb(x_0,v_0)).$$
Since the weights $\exp[\mathfrak{g}_\eta(\pb)]|\partial\pi_1\pb(x_0,v_0)|^\sigma$ are positive, $\Phi_0(\pb(x_0,v_0))=0$ for all $\pb\in\mathbf{B}^{\circ n}$.  The density result of Proposition \ref{op:alg:simple}.(\ref{mixing:dense})  and continuity of $\Phi$ result in a contradiction. Hence, we obtain the statement with $\Phi_{\sigma,\eta}:=\Phi_0$.

It is a classical result that $\Phi_{1,\bf 0}(x,v)=\frac{1}{\log 2(x+1)}$ is an eigenfunction of $\Lc_{1,\bf 0}$ and $\widehat{\Lc}_{1,\bf 0}$ with the eigenvalue $1$ of maximal modulus. For $\mathcal{M}_{s,\wbb}$, recall that $\mathcal{M}_{1,\bf 0}=\widehat{\Lc}_{1,\bf 0}^2$.

(3) The first two claims come from the definition of $\lambda_{\sigma,\eta}$. For the geometric simplicity, let $\Psi$ be  a eigenfunction for $\lambda_0$ and set $t:=\min\{\frac{\Psi(x,v)}{\Phi_0(x,v)}\,|\,(x,v)\in{X}\}$. Since $t=\frac{\Psi(x_0,v_0)}{\Phi_0(x_0,v_0)}$ for some $(x_0,v_0)\in {X}$ by the continuity, we can conclude that $\Psi=t\Phi_0$ using the last density argument.

For the algebraic simplicity, let us assume that for a non-trivial $\Psi\in C^1({X})$, one has $(\Hc_{\sigma,\eta}-\lambda_0\mathcal{I})^2\Psi={\bf 0}$ and $\Phi:=(\Hc_{\sigma,\eta}-\lambda_0\mathcal{I})\Psi\neq\bf 0$. Then, we have $\Hc_{\sigma,\eta}^n\Psi=\lambda_0^n\Psi+n\lambda_0^{n-1}\Phi$, with which we deduce a contradiction from 
$$\|\Hc_{\sigma,\eta}^n\Psi\|_0\leq\|\Hc_{\sigma,\eta}^n\Phi_{\sigma,\eta}\|_0\|\Phi_{\sigma,\eta}^{-1}\Psi\|_0=\lambda_0^n\|\Phi_{\sigma,\eta}^{-1}\Psi\|_0.$$


(4) Extending the functionals on the eigenspace for $\lambda_{\sigma,\eta}$ to $C^1({X})$ using spectral projection, we obtain a positive eigen Radon measure of the adjoint, which corresponds to a Borel probability measure on ${X}$. Normalizing suitably, we obtain the statement.
\end{proof}

We show the uniqueness of the eigenvalue of maximal modulus.

\begin{prop} \label{ruelle:perron}
Let $\mathfrak{g}=\bG$ or $\cG$. Then, for $(\sigma,\eta)\in B$,
the eigenvalue $\lambda_{\sigma,\eta}$ is unique, i.e., $\Hc_{\sigma,\eta}$ has no other eigenvalue on the circle of radius $\lambda_{\sigma,\eta}$. 
\end{prop}

\begin{proof}
Since it is almost same as in Baladi \cite[Theorem 1.5.(5)]{baladi}, let us give a sketch of the proof. First we need to show that 
\begin{align}\label{iter:limit:temp}
\lim_{n\rightarrow \infty}\left\|\frac{1}{\lambda_{\sigma,\eta}^n}\Hc_{\sigma,\eta}^n\Psi-\Phi_{\sigma,\eta}\int_{{X}}\Psi d\mu_{\sigma,\eta}\right\|_0=0
\end{align}
for all $\Psi\in C^1({X})$. Our version of the density result in Proposition \ref{op:alg:simple}.(\ref{mixing:dense}) together with Proposition \ref{prop:cong:Tadm} enables us to show that a continuous accumulation point of the sequence $\lambda_{\sigma,\eta}^{-n}\Hc_{\sigma,\eta}\Psi$ is actually the function $\Phi_{\sigma,\eta}\int_{{X}}\Psi d\mu_{\sigma,\eta}$. The limit is verified after applying the Arzel\`a--Ascoli theorem to an equicontinuous family $\{\lambda_{\sigma,\eta}^{-n}\Hc_{\sigma,\eta}^n\Psi\,|\,n\geq 1\}$. Then the desired statement of proposition follows easily from the expression (\ref{iter:limit:temp}).
\end{proof}

\hide{----
\begin{rem}
As $\Tb$ is not topological mixing, one can expect easily that the above proof is no longer valid for $\Lc_{s,\eta}$. Nonetheless, based on numerical calculations, we speculate that the uniqueness of the eigenvalue is still valid.
\end{rem}
-------}

\section{Dolgopyat--Baladi--Vall\'{e}e bound in vertical strip}\label{sect:dolgopyat}

Main objective in this section is the uniform polynomial bound for the iterations of $\Hc_{s,\psi}$, namely the Dolgopyat--Baladi--Vall\'ee estimate. 
Consequently, along with the results from $\S$\ref{sect:spec:transf}, we complete the proof of Proposition \ref{main:dynamics} at the end of this section.

 Dolgopyat \cite{dolgopyat} first established the result of such type for the plain transfer operators associated to certain Anosov systems with a finite Markov partition, which depends on a single complex parameter $s$. Let us roughly overview his ideas for the proof:
\begin{enumerate}
\item Due to the spectral properties of transfer operator, the main estimate can be reduced to $L^2$-norm estimate, which is decomposed into a sum of oscillatory integrals over the pairs of the inverse branches. This sum is divided into two parts. 

\item Relatively \emph{separated
pairs} of inverse branches consist in one part, in which the oscillatory integrals can be simply dealt with the use of Van der Corput Lemma.

\item In order to control the other part that consists of \emph{close pairs}, the dynamical system must satisfy the Uniform Non-Integrability(UNI) condition, which explains that there are a few such pairs.

\end{enumerate}

This groundbreaking work has been generalised into other dynamical systems. In particular, Baladi--Vall\'ee \cite{bv} modified the UNI condition to obtain Dolgopyat-type estimate for the weighted transfer operator associated to the Gauss map with countably many inverse branches. Our proof goes in a similar way. In fact, focusing on the classical continued fractions, we mainly follow a more concise exposition of Naud \cite{naud}.

\subsection{Reduction to $L^2$-estimates}

Consider the normalised operator defined by
\begin{align}\label{Lsw:normalization}
\underline{\Hc}_{s,\psi}\Psi:= \lambda_{\sigma, \eta}^{-1} \Phi_{\sigma, \eta}^{-1} \Hc_{s,\psi} (\Phi_{\sigma, \eta} \cdot \Psi)
\end{align}
for $\Psi\in C^1(X)$.
Then $\underline{\Hc}_{\sigma,\eta}$ on ${C^1}({{X}})$ has a spectral radius $1$ and fixes the constant function $\bf 1$, i.e., $\underline{\Hc}_{\sigma,\eta} \mathbf{1}=\mathbf{1}$. 

For $t \neq 0$, we set and use the norm
\[ \| \Psi \|_{(t)}:= \|\Psi\|_0+\frac{1}{|t|} \|\Psi \|_0 \mbox{ for } \Psi\in C^1(X)\]
which is equivalent to $\| \cdot \|_1$. One of the main interest in this section is to estimate $\|\Hc_{s,\psi}^n\|_{(t)}$ or equivalently $\|\underline{\Hc}_{s,\psi}^n\|_{(t)}$.

We start the calculation to obtain the bound of Dolgopyat-Baladi-Vall\'ee by reducing our main estimate to $L^2$-type estimate. For the reduction we need:

\begin{lem} \label{relating:lebesgue}
Let $(\sigma,\eta) \in {B}$ where $B$ is chosen small enough so that $\sigma>3/4$. For all $n \geq 1$, we get
\[ \|\underline{\Hc}_{\sigma,\eta}^{n}\Psi\|_0^2 \ll_{B} A_{\sigma,\eta}^{2n} \cdot \big\| \underline{\Hc}_{1,\mathbf{0}}^{n}|\Psi|^2\big\|_0  \]
for $A_{\sigma,\eta}=\lambda_{\sigma,\eta}^{-1}\sqrt{\lambda_{2 \sigma-1, 2\eta}}>0$.
\end{lem}

\begin{proof} \label{cs:leb}
By the Cauchy-Schwarz inequality, we have
\begin{align*} 
 \|\underline{\Hc}_{\sigma,\eta}^{n}\Psi\|_0^2 
\leq \lambda_{\sigma,\eta}^{-2n}  \big\| \Hc_{2\sigma-1, 2\eta}^n \Phi_{2\sigma-1, 2\eta}\big\|_0 \cdot\big\| \mathcal{H}_{1,\mathbf{0}}^{n}|\Psi|^2 \big\|_0.
\end{align*} 
The desired result comes from  $\mathcal{H}_{1,\mathbf{0}}^{n}|\Psi|^2 \ll_B \underline{\Hc}_{1,\mathbf{0}}^{n}|\Psi|^2$
and 
$\Hc_{2\sigma-1, 2\eta}^n \Phi_{2\sigma-1, 2\eta} \ll_{B}\lambda_{2\sigma-1, 2\eta}^n$.
\end{proof}
A crucial observation based on the spectral gap is that the projection operator $\underline{\mathcal{P}}_{1,\bf0}$ associated with the dominant eigenvalue $1$ satisfies: $\underline{\Hc}_{1,\bf 0}=\underline{\mathcal{P}}_{1,\bf 0}+\underline{\mathcal{N}}_{1,\bf 0}$ and the subdominant spectral radius $R_1$ of $\underline{\mathcal{N}}_{1,\bf 0}$ is strictly less than $1$. In particular, we have
$\underline{\mathcal{P}}_{1,\bf 0}\Psi=\int_{{X}}\Psi\,dm$
and hence $\underline{\Hc}_{1,\bf 0}^n\Psi=\int_{{X}}\Psi\,dm+O(R_1^n).$ 
In sum, from Lemma \ref{relating:lebesgue}, we get
\begin{align}\label{L2:red} 
 \|\underline{\Hc}_{s,\psi}^{n+k}\Psi\|_0^2 
\ll_B A_{\sigma,\eta}^{2n} \Big(\int_{{X}}\big|\underline{\Hc}_{s,\psi}^k\Psi\big|^2dm+O(R_1^n) |t| \big\|\Psi\big\|_{(t)}^2 \Big).
\end{align}

\subsection{Estimating $L^2$-norms}

The normalised operator satisfies the Lasota--Yorke inequality, which comes from a direct computation similar to the proof of Proposition \ref{lasota:yorke}.

\begin{prop} \label{lasota:yorke:normal}
Let $B$ be as before. For $(s,\psi)$ with $(\sigma,\eta) \in {B}$ and all $n \geq 1$, we have $\|\underline{\Hc}_{s,\psi}^{n}\Psi\|_1 \ll_B |s| \|\Psi\|_0+ \rho^n \|\Psi\|_1.$
\end{prop}

The following $L^2$-estimate is the heart of $\S$\ref{sect:dolgopyat}, in which the UNI property in Proposition \ref{uni} plays an essential role together with the Lasota--Yorke inequality.


\begin{prop} \label{L2:estimate}
For suitable constants $\alpha, \beta>0$, large $|t| \geq \frac{1}{\rho^2}$, and for $(s,\psi)$ with $(\sigma,\eta) \in {B}$, we have
 \[ \int_{{{X}}} \Big|\, \underline{\Hc}_{s,\psi}^{\lceil \alpha \log |t| \rceil}\Psi \Big|^2 dm \ll_B \rho^{\beta \lceil \alpha \log |t| \rceil} \|\Psi\|_{(t)}^2.  \]
\end{prop}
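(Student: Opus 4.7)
The plan is to adapt the Dolgopyat--Baladi--Vall\'ee strategy to the skewed Gauss system by reducing the $L^2$-bound to an oscillatory integral estimate in the $x$-variable. Since the matrix coordinate $v \in \Gamma\backslash\GL_2(\Zb)$ is discrete and finite, each inverse branch $\qb=(h,e)\in \Qbb^{(n)}$ moves $v$ to a single coset and all oscillation is carried by the factor $|h'(x)|^{it}$; the weights $\exp(\wbb\rdot\Ibb\circ\pi_2 \qb^{(i)})$ are uniformly bounded for $(\sigma,\xbb)$ in the compact neighborhood $K$, so the argument essentially transplants from $[0,1]$ to $I_\Gamma$.

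First I would expand $|\widetilde{\Lc}_{s,\wbb}^n\Psi(x,v)|^2$ as a double sum over pairs $(h,e),(k,f)\in \Qbb^{(n)}$; the $s$-dependent ratio produces the oscillating factor $\exp(it\,\psi_{h,k}(x))$ with $\psi_{h,k}(x)=\log|h'(x)/k'(x)|$, while the remaining factors are absolutely bounded multiples of $\widetilde{\Lc}_{\sigma,\xbb}^n\mathbf{1}\cdot \widetilde{\Lc}_{\sigma,\xbb}^n\mathbf{1}\approx 1$, as in the proof of Lemma~\ref{relating:lebesgue}. After integrating in $x$ and summing over $v$, I would split the double sum along Baladi--Vall\'ee's dichotomy: close pairs with $\Delta(h,k)\leq \rho^{an}$ and separated pairs with $\Delta(h,k)>\rho^{an}$, for a parameter $a\in(0,1)$ to be optimized. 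For close pairs, Proposition~\ref{uni}(a) shows that the union of their $x$-ranges has Lebesgue measure $O(\rho^{an})$, yielding a contribution $\ll \rho^{an}||\Psi||_0^2$. For separated pairs, I integrate by parts once against $\exp(it\,\psi_{h,k}(x))$: the lower bound $|\psi_{h,k}'|\geq \rho^{an}$ together with the uniform bound on $\psi_{h,k}''$ (Proposition~\ref{uni}(b)) produces a gain of $(|t|\rho^{an})^{-1}$, while the derivative hitting $\Psi$ introduces a factor $||\partial\Psi||_0/(|t|\rho^{an})$; these combine into $(|t|\rho^{an})^{-1}||\Psi||_{(t)}^2$ after a Cauchy--Schwarz in the pair summation.

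Setting $n=\lceil \alpha \log |t|\rceil$ and balancing the two contributions $\rho^{an}$ and $(|t|\rho^{an})^{-1}$ against each other yields $|t|^{-\beta}||\Psi||_{(t)}^2$ for some $\beta=\beta(\alpha,a)>0$, whose positivity is guaranteed once $\alpha|\log \rho|$ exceeds an explicit threshold determined by $a$; the hypothesis $|t|\geq \rho^{-2}$ ensures $n\geq 2$ so that the UNI dichotomy is nontrivial. The hard part will be maintaining \emph{uniformity} of all constants across the countably infinite branch set $\Qbb^{(n)}$ and across $(\sigma,\xbb)\in K$: summing van der Corput estimates over separated pairs requires the absolute convergence $\sum_{m}\sup_x|h_m'(x)|^{2\sigma-1}<\infty$ from Proposition~\ref{prop:gaussmap}(3) together with bounded distortion from (2), and tails of high-order digits ($m\gg |t|^C$ for suitable $C$) must be truncated and estimated separately using the trivial bound through $\widetilde{\Lc}_{\sigma,\xbb}^n\mathbf{1}\ll 1$. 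The skewed coordinate $v$ causes no new difficulty since it contributes only a uniformly bounded, piecewise constant factor through $\exp(\xbb\rdot \Ibb)$, so Proposition~\ref{L2:estimate} reduces essentially to Baladi--Vall\'ee's argument for the plain Gauss system, carried out fiberwise over $\Gamma\backslash\GL_2(\Zb)$.
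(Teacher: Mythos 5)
Your proposal follows essentially the same route as the paper: expand $|\widetilde{\Lc}_{s,\wbb}^n\Psi|^2$ as a double sum over pairs of inverse branches, split by the UNI dichotomy into close and separated pairs, control the close pairs via the measure estimate in Proposition~\ref{uni}(a) and the separated pairs via van der Corput using Proposition~\ref{uni}(b), then cite Baladi--Vall\'ee's computations for the remaining uniformity bookkeeping. The paper's proof is equally terse at exactly these points, deferring to \cite[(3.2.1), (3.2.2)]{bv}, and your observation that the finite skewed coordinate $v$ contributes only a bounded factor and forces no new oscillation is the same reduction the authors rely on.
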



\begin{proof}

First we express the integrand as 
\begin{align*}
 |\underline{\Hc}_{s,\psi}^{n}\Psi|^2 = \frac{1}{\lambda_{\sigma,\eta}^{2n}} \sum_{({\bf p}, \qb) \in \mathbf{B}^{\circ n} \times \mathbf{B}^{\circ n}}  |\partial\pi_1\pb|^{it} |\partial\pi_1\qb|^{-it} \cdot R^\sigma_{\bf p, \qb}
\end{align*}
where we set  
\begin{align*}
g_\psi(\bf p,q)&:= \exp[\mathfrak{g}_\psi(\pb)+\mathfrak{g}_{\cl{\psi}}(\qb)],\\
R^\sigma_{\bf p, \qb} &:= \Phi_{\sigma,\eta}^{-2}\cdot g_\psi(\pb,\qb)|\partial\pi_1\pb|^\sigma |\partial\pi_1\qb|^\sigma \cdot (\Phi_{\sigma,\eta} \Psi) \circ {\bf p} \cdot (\Phi_{\sigma,\eta}\overline{\Psi}) \circ \qb 
\end{align*}
in order to simplify the notation.
Thus we have 
\begin{align}\label{sum:p:q} 
\int_{{{X}}} |\underline{\Hc}_{s,\psi}^{n}\Psi|^2dm=  \frac{1}{\lambda_{\sigma,\eta}^{2n}} \sum_{(\bf p,q)}
 \int_{{X}} \exp \left[ it \Pi_{\pb,\qb} \right] R_{\bf p, q}^\sigma\,dm.
\end{align}
Here recall that $\Pi_{\bf p,q}= \log |\partial \pi_1 \bf{p}|-\log |\partial \pi_1 \bf{q}|$.


Since $R_{\pb,\qb}^\sigma$ are bounded, the sum is dominated by the oscillatory integrals which are controlled by the behavior of the phase function $\Pi_{\pb,\qb}$, hence essentially by the geometric properties of the skewed Gauss map. We divide the sum (\ref{sum:p:q}) into two parts: one with close pairs, i.e., with small  $\Delta(\pb,\qb)$ and another with relatively separated pairs, i.e., with relatively large $\Delta(\pb,\qb)$. In other words, the integral (\ref{sum:p:q}) is written as $\int_{{{X}}} |\underline{\Hc}_{s,\psi}^{n}\Psi|^2dm= I^{(1)} + I^{(2)}$, where
\begin{align*} 
I^{(1)} &:= \frac{1}{\lambda_{\sigma,\eta}^{2n}} \sum_{\Delta(\pb,\qb) \leq \varepsilon}
 \int_{{X}} e^{it \Pi_{\pb,\qb}} R_{\bf p,q}^\sigma\,dm
 \mbox{  and }\\ I^{(2)} &:=   \frac{1}{\lambda_{\sigma,\eta}^{2n}} \sum_{\Delta(\pb,\qb)>\varepsilon}
 \int_{{X}} e^{it \Pi_{\pb,\qb}}  R_{\bf p,q}^\sigma\,dm. 
\end{align*}


Let us consider the integral $I^{(1)}$. We set $\nu_{\sigma, \eta}:=\Phi_{\sigma,\eta} \mu_{\sigma,\eta}$, which is fixed by the normalised adjoint operator.
We need the following results that are mere reformulations of Naud \cite[Lemma 4.2]{naud}.
\begin{lem}\label{nu:sigma:x:lem}
\begin{enumerate}[leftmargin=*]
\item \label{order:mag:ratio}For all $\pb\in{\bf B}^{\circ n}$, we get 
$\ds\frac{\|\partial\pi_1\pb\|_0^\sigma}{\lambda_{\sigma,\eta}^n}\asymp_B \nu_{\sigma,\eta}(\pb(X)).$
\item Let ${\bf A}$ be a subset of ${\bf B}^{\circ n}$ and $Y=\bigcup_{\qb\in{\bf A}}\qb(X)$. Then, $\nu_{\sigma,\eta}(Y)\ll_B A_{\sigma,\eta}^{2n}m(Y)^{1/2}.$
\end{enumerate}
\end{lem}

 

Obviously we have 
\begin{align}
	I^{(1)} \ll_B \frac{\| \Psi \|_0^2}{\lambda_{\sigma,\eta}^{2n}} \sum_{\Delta(\pb,\qb) \leq \varepsilon}\|\partial\pi_1\pb\|_0^\sigma\|\partial\pi_1\qb\|_0^\sigma  \int_{{X}} g_\eta(\pb,\qb) dm\label{L2:I1}.
\end{align}
Hence, by Lemma \ref{nu:sigma:x:lem} we have
\begin{align*}
I^{(1)}&\ll_B \| \Psi \|_0^2 \sum_{\Delta(\pb,\qb) \leq \varepsilon}\nu_{\sigma,\eta}(\pb(X))\nu_{\sigma,\eta}(\qb(X))  \int_{{X}} g_\eta(\pb,\qb)dm\\
&\ll_B\| \Psi \|_0^2 \sum_{\pb \in {\bf B}^{\circ n}} \nu_{\sigma,\eta}(\pb({X}))  \bigg( \sum_{{\qb \in {\bf B}^{\circ n}} \atop \Delta(\pb,\qb) \leq \varepsilon} \nu_{\sigma,\eta}(\qb({X})) \bigg).
\end{align*}
For any $0<a<1$, taking $\varepsilon=\rho^{an}$, we finally have 
\begin{align*}
|I^{(1)}| \ll_B \|\Psi\|_0^2\lambda_{\sigma,\eta}^{-n}\|\Hc_{\sigma,\bf 0}^n{\bf 1}\|_0\rho^{an/2}\ll_B \rho^{an/2} A_{\sigma,\eta}^{2n}  \| \Psi \|_0^2
\end{align*}
by the UNI condition of Proposition \ref{uni}.(\ref{uni:a}) and Lemma \ref{nu:sigma:x:lem}.(\ref{order:mag:ratio}). 

The main point of estimating $I^{(2)}$ is to deal with the oscillatory integrals with the phase function $\Pi_{\pb,\qb}$. By Proposition \ref{uni}.(\ref{uni:b}) and a version of Van der Corput Lemma (see Baladi-Vall\'ee \cite[p.359]{bv}), we obtain 
\begin{align*}
|I^{(2)}| \ll \sum_{\Delta(\pb,\qb) >\varepsilon} \frac{\| R_{\bf p,q}^\sigma \|_1}{|t|} \left( \frac{1}{\varepsilon}+\frac{1}{\varepsilon^2} \right) \ll_{B} \| \Psi \|_{(t)}^2 \frac{(1+\rho^n|t|)}{|t|} \left( \frac{1}{\varepsilon}+\frac{1}{\varepsilon^2} \right) 
\end{align*}
by observing the Lasota--Yorke type estimate for $R_{\bf p,q}^\sigma$. Then again choosing the scale $\varepsilon=\rho^{an}$ with $n=\lceil \alpha \log |t| \rceil$ for some $\al$ and $a$ satisfying $|t| \leq \rho^{-a n}$, we have $|I^{(2)}| \ll \rho^{(1-2a)n} \| \Psi \|_{(t)}^2$.

Hence by the above choices of $\epsilon$, $n$, $a$ and $\al$, we finally have the complete estimate for $I^{(1)} + I^{(2)}$ with a constant $\beta=1-2a>0$.
\end{proof}


\subsection{Uniform polynomial growth}


Finally, the following Dolgopyat--Baladi--Vall\'ee estimate can be deduced from the $L^2$-type estimate in Proposition \ref{L2:estimate}.

\begin{thm} \label{bound:dolgopyat}
For $0<\xi<\frac{1}{5}$, there is a small neighborhood $B' \subseteq B$ such that for all complex pair $(s,\psi)$ with $(\sigma,\eta)\in B'$, an integer $n \geq 1$, and $|t| \geq \frac{1}{\rho^2}$, we have 
\[ \| \Hc_{s,\psi}^{n}\|_{(t)} \ll_{B,\xi} (r \lambda_{\sigma,\eta})^n |t|^\xi \]
for some $0<r<1$. In particular, the quasi-inverse $(\mathcal{I}-\Hc_{s,\psi})^{-1}$ is well-defined and analytic when $(\sigma,\eta)\in B'$.
\end{thm}

\begin{proof}

Set $n_0=n_0(t):=\lceil \alpha \log |t| \rceil$. From (\ref{L2:red}), for $n_1=n_1(t) \geq n_0$, we have
\begin{align*} 
\|\underline{\Hc}_{s,\psi}^{n_1}\Psi \|_0^2 &\ll_{B}  A_{\sigma,\eta}^{2(n_1-n_0)} \left(  \int_{{{X}}} |\underline{\Hc}_{s,\psi}^{n_0}\Psi|^2 + R_1^{n_1-n_0} |t| \|\Psi\|_{(t)}^2 \right) \\ &\ll_{B} A_{\sigma,\eta}^{2(n_1-n_0)} \left( \rho^{\beta n_0} + R_1^{n_1-n_0} |t|  \right) \|\Psi\|_{(t)}^2 .
\end{align*}
We take $n_1=\lceil\widetilde{\alpha} n_0\rceil$ for some $\widetilde{\alpha}>1$ to be large enough to have $R_1^{n_1-n_0} |t|=O(\rho^{\beta n_0})$ and choose $B'$ to be small enough so that $A_{\sigma,\eta}^{n_1-n_0}\ll_{B'} \rho^{-\beta n_0 /2}$. Then, we get
\[ \|\underline{\Hc}_{s,\psi}^{n_1}\Psi \|_0 \ll_{B'} \rho^{\widetilde{\beta} n_1} \|\Psi\|_{(t)} \]
for a suitable $\widetilde{\beta}>0$. Repeated application of Lasota-Yorke inequality from Proposition \ref{lasota:yorke:normal} enables us to write
\[
 \|\underline{\Hc}_{s,\psi}^{2n_1}\Psi \|_1 \ll |s| \|\underline{\Hc}_{s,\psi}^{n_1}\Psi \|_0 + \rho^{n_1} \|\underline{\Hc}_{s,\psi}^{n_1}\Psi\|_1 \\ \ll \rho^{\widetilde{\beta} n_1} |t|  \|\Psi\|_{(t)}
\]
and hence we have $\|\underline{\Hc}_{s,\psi}^{2n_1}\|_{(t)} \ll \rho^{\widetilde{\beta} n_1}$. For a fixed $t$ with $|t| \geq \frac{1}{\rho^2}$, writing any integer $n=(2 n_1)q+m$ with $m<2 n_1$, we obtain
\[ \|\underline{\Hc}_{s,\psi}^{n}\|_{(t)} \leq  \|\underline{\Hc}_{s,\psi}^{m}\|_{(t)} \|\underline{\Hc}_{s,\psi}^{2 n_1}\|_{(t)}^q  \ll \rho^{\widetilde{\beta} q n_1} \leq \rho^{\widetilde{\beta} n /2} \rho^{-\widetilde{\beta} n_1}   \] 
since for a large $|t|$, we have $\|\underline{\Hc}_{s,\psi}^{m}\|_{(t)} \ll 1$. This leads to the assertion by choosing $\xi=\widetilde{\beta}\widetilde{\alpha}$ and $r=\rho^{\widetilde{\beta}/2}$.

More detailed computation in Baladi--Vall\'ee \cite[\S3.3, Eq.(3.21)--(3.23)]{bv} shows the closed forms of $\widetilde{\alpha}$ and $\widetilde{\beta}$, which determine that the constant $\xi$ can be taken to be any value between $0$ and $1/5$.
\end{proof}

\hide{---
\begin{rem}
More detailed computation in Baladi--Vall\'ee \cite[\S3.3]{bv} shows that actually the constant $\xi$ can be taken between $0$ and $1/5$.
\end{rem}
---}

\section{Coboundary conditions}\label{sec:pressure}

First we collect some preliminary results to prove the main steps of Proposition \ref{main:dynamics}. We follow a similar argument of Baladi-Vall\'ee \cite[Proposition 1]{bv}. 

From Baladi--Vall\'ee \cite[Proposition 0.6a]{bv}) together with Remark \ref{rem:M:L:sq}, we get
\begin{prop}\label{derv:lambda:s:0}
$\ds\frac{\partial \lambda_{s, \bf 0}}{\partial s}\Big|_{s=1}=-\frac{\pi^2}{12 \rho\log 2}.
$
\end{prop}

For a fixed real number $h$, define a piecewise differentiable cost function $\Upsilon\in L^1({{X}})$ such as 
$$\Upsilon(x,v):=\begin{cases}
2h\log|x|+{\psi}(v)&\mbox{ if }\mathfrak{g}=\bG\\
2h\log|x|+{\psi}(v)+{\psi}(v\twobytwotiny{-m_1(x)}{1}{1}{0}{\rm j})&\mbox{ if }\mathfrak{g}=\cG
\end{cases}.$$
From now on, let us set 
$$\mathbf{S}:=\widehat{\Tb}\mbox{ or }\widetilde{\Tb}^2$$ according to the choice of $\mathfrak{g}=\bG$ or $\cG$, respectively. 
The following two results will be useful when discussing the property of the pole $s(\wbb)$.

\begin{prop}\label{var:cob:prop}
For a $\psi\in \Cb^{\cha{\Gamma}}$, we have
$$\frac{d^2}{dw^2}\lambda_{1+h w,w\psi}\Big|_{w=0}=\lim_{n\rightarrow\infty}\frac{1}{n}\int_{{X}}\left(\sum_{k=0}^{n-1}\Upsilon\circ\mathbf{S}^k\right)^2\Phi_{1,\bf 0}\,dm.$$
\end{prop}

\begin{proof}
We set $\kappa(w):=\lambda_{1+hw,w\psi}$  and $\Psi(w):=\Phi_{1+hw,w\psi}$. Note that $\kappa(0)=1$ and $\kappa'(0)=0$. 
From Proposition \ref{birkhoff:prop}, we obtain
$$\kappa(w)^n\Psi(w)=\mathcal{H}_{1+hw,w\psi}^n\Psi(w)=\mathcal{H}_{1,0}^n\left[\exp\Big[w\sum_{k=0}^{n-1}\Upsilon\circ{\bf S}^k\Big]\Psi(w)\right].$$
Differentiating this twice and setting $w=0$, we have
\begin{align*}
n\kappa''(0)\Psi(0)+\Psi''(0)
=\mathcal{H}_{1,0}^n\left[(\sum_{k=0}^{n-1}\Upsilon\circ{\bf S}^k)^2\Psi(0)+2(\sum_{k=0}^{n-1}\Upsilon\circ{\bf S}^k)\Psi'(0)+\Psi''(0)\right].
\end{align*}
Hence, we get
$$\kappa''(0)=\frac{1}{n}\int_{{X}}\left((\sum_{k=0}^{n-1}\Upsilon\circ{\bf S}^k)^2\Psi(0)+2(\sum_{k=0}^{n-1}\Upsilon\circ{\bf S}^k)\Psi'(0)\right)dm.$$
It can be shown that the second term satisfies
\begin{align}\label{2nd:van:upsilon}
\int_{{X}}(\frac{1}{n}\sum_{k=0}^{n-1}\Upsilon\circ{\bf S}^k)\Psi'(0)dm=o(1).
\end{align}
In fact, we have
$$\int_{{X}}(\frac{1}{n}\sum_{k=0}^{n-1}\Upsilon\circ{\bf S}^k)\Psi'(0)dm=\int_{{X}}\Upsilon \left(\frac{1}{n}\sum_{k=0}^{n-1}\Hc_{1,0}^k\Big[\Psi'(0)\Big]\right)dm$$
and since the spectral radius of $\mathcal{N}_{1,0}$ is strictly less than $1$, we get
$$\frac{1}{n}\sum_{k=0}^{n-1}\Hc_{1,0}^k\Big[\Psi'(0)\Big]=\frac{1}{n}\sum_{k=0}^{n-1}\mathcal{P}_{1,0}^k\Big[\Psi'(0)\Big]+o(1)=C\Phi_{1,0}+o(1)$$ 
for a constant $C$. Hence L.H.S. of (\ref{2nd:van:upsilon}) equals
$$C\int_{{X}}\Upsilon \Phi_{1,0}dm+o(1)=C \kappa'(0)+o(1)=o(1).$$
This finishes the proof.
\end{proof}

\begin{prop}\label{var:h:cohomo}
There is $\Theta\in C^1(X)$ such that for $\widetilde{\Upsilon}:=\Upsilon+\Theta\circ{\bf S}-\Theta$, we have
\begin{align*}
	\frac{d^2}{dw^2}\lambda_{1+h w,w\psi}\Big|_{w=0}=\int_{{X}}\widetilde{\Upsilon}^2\Phi_{1,\bf 0}\,dm.
\end{align*}
In particular, the last quantity is zero if and only if $h=0$ and $\psi$ is a coboundary over $\Rb$.
\end{prop}

\begin{proof}
Recall that  $\int_{{X}}\Upsilon\Phi_{1,\bf 0}\, dm=0$. Hence, we get $\|\Hc_{1,0}^n\Upsilon\Phi_{1,\bf 0}\|_0\ll R_1^n$ for the subdominant eigenvalue $R_1<1$ as $\Hc_{1,\bf 0}[\Upsilon\Phi_{1,\bf 0}]\in C^1({X})$ and obtain a  function $$\Theta:=\Phi_{1,\bf 0}^{-1}(\mathcal{I}-\Hc_{1,\bf 0})^{-1}\Hc_{1,\bf 0}[\Upsilon\Phi_{1,\bf 0}]$$  
that is well-defined in $C^1({X})$. In a similar way as (\ref{2nd:van:upsilon}), it can be shown that
$$\int_{{X}}\Big(\frac{1}{n}\sum_{k=0}^{n-1}\widetilde{\Upsilon}\circ{\bf S}^k\Big)\Phi_{1,\bf 0}dm=o(1).$$
Since $\Theta\circ {\bf S}-\Theta$ is bounded, we can conclude that
\begin{align*}
\frac{d^2\lambda}{dw^2}(1,0)=\lim_{n\rightarrow\infty}\frac{1}{n}\int_{{X}}\Big(\sum_{k=0}^{n-1}\widetilde{\Upsilon}\circ {\bf S}^k\Big)^2\Phi_{1,\bf 0}\,dm.
\end{align*}
Since  $\Hc_{1,0}[\Psi_1\Psi_2\circ{\bf S}]=\Hc_{1,0}[\Psi_1]\Psi_2$ for any $\Psi_1,\Psi_2\in C^1({{X}})$, one can show that $\Hc_{1,\bf 0}[\widetilde{\Upsilon}\Phi_{1,\bf 0}]=0$. 
Hence, for $k>j$, we have
\begin{align*}
\int_{{X}}\widetilde{\Upsilon}\circ{\bf S}^k\,\widetilde{\Upsilon}\circ{\bf S}^j\Phi_{1,\bf 0}\,dm =\int_{{X}}\Hc_{1,\bf 0}^{k-j-1}\left[\widetilde{\Upsilon}\circ {\bf S}^{k-j-1}\Hc_{1,\bf 0}[\widetilde{\Upsilon}\Phi_{1,\bf 0}]\right]\,dm
=0.
\end{align*}
Hence, we obtain the first statement. 

For the second statement, observe that the integral is zero if and only if $\widetilde{\Upsilon}=0$, i.e., $\Upsilon=\Theta-\Theta\circ {\bf S}$. Since  $\Theta$ is bounded, the latter statement is equivalent to the conditions that  $h=0$ and $\psi$ is a coboundary over $\Rb$.
\end{proof}


\begin{rem}\label{cob:cond2}
	Observe that 	$\psi$ is a $\mathfrak{g}$-coboundary over $\Bbbk$ if and only if there exists a $\beta\in \Bbbk^{\cha{\Gamma}}$ such that  $\mathfrak{g}_\psi(\pb)=\beta-\beta\circ\pi_2\pb$ for any $\pb\in \mathbf{B}$.
\end{rem}
The following result will be one of crucial ingredients in the next section.

\begin{prop}\label{op:far0}
	Let $\Hc_{s,\psi}=\widehat{\Lc}_{s,\psi}$ or $\mathcal{M}_{s,\psi}$. Let $t$ be a real number. Then, $1$ is an eigenvalue of $\Hc_{1+it, i \zeta}$ if and only if $t=0$ and $\zeta$ is a $\mathfrak{g}$-coboundary over $\Rb/2\pi\Zb$.
\end{prop}

\begin{proof}
	First let us assume that there is a $\Psi\in C^1({X})$ with $\|\Psi\|_0=1$ such that $\Hc_{1+it,i\zeta}\Phi_{1,\bf 0}\Psi=\Phi_{1,\bf 0}\Psi$. 
	Suppose that $|\Psi|$ attains a maximum at $(x_0,v_0)$. Setting 
	\begin{align*}
		a_\qb&:=\frac{1}{\Phi_{1,\bf 0}(x_0,v_0)}|\partial\pi_1\qb(x_0)|\Phi_{1,0}\circ\qb(x_0,v_0)\mbox{ and }\\ 
		b_\qb&:=\frac{1}{\Psi(x_0,v_0)}\Psi\circ\qb(x_0,v_0)\exp[i\mathfrak{g}_\zeta(\qb)(v_0)]|\partial\pi_1\qb(x_0)|^{it},
	\end{align*}
	we have $\sum_{\qb\in{\bf B}^{\circ n}}a_\qb b_\qb=1$ for all $n\geq1$.
	Since $\sum_{\qb\in{\bf B}^{\circ n}}a_\qb=1$ and $|b_\qb|\leq 1$, we obtain $b_\qb=1$ for all $\qb$.  In other words,
	we have
	$$\exp[i\mathfrak{g}_\zeta(\qb)(v_0)]|\partial \pi_1\qb(x_0)|^{i t} \Psi\circ\qb(x_0,v_0) = \Psi(x_0,v_0)$$
	for any $\qb \in {\bf B}^{\infty}$. 
	Proposition \ref{op:alg:simple} enables us to show $|\Psi|\equiv 1$, the constant function. Then we repeat the above process for any $(x,v)\in X$. In sum, we conclude
	\begin{align}\label{C2:conj}
		\exp[i\mathfrak{g}_\zeta(\qb)]|\partial \pi_1\qb|^{i t} \Psi\circ \qb = \Psi
	\end{align}
	for all  $\qb\in {\bf B}^{\infty}$.

	From (\ref{C2:conj}), we have 
	$$|t|\cdot\|\partial \Pi_{\pb,\qb}\|_0=\|\partial \pi_1\pb\cdot\partial (\log \Psi)\circ\pb-\partial \pi_1\qb\cdot\partial (\log \Psi)\circ\qb\|_0$$
	for all $\pb,\qb\in{\bf B}^{\circ n}$ and $n\geq 1$. By the uniform contraction in Proposition \ref{prop:gaussmap}, we obtain
	 $|t|\cdot\|\partial \Pi_{\pb,\qb}\|_0\ll \rho^n\leq \rho^{an}$ 
	for all $0<a<1$ and $\pb,\qb\in{\bf B}^\infty$.
	Hence, we conclude that $t=0$, otherwise the last inequality violates the UNI property (a) in Proposition \ref{uni}. In sum, there exists a $\Psi\in C^1({X})$ such that $|\Psi|\equiv 1$ and
	\begin{align}\label{cocycle:ybb}
		\exp[i\mathfrak{g}_\zeta(\pb)] = \frac{\Psi}{\Psi\circ\pb}\mbox{ for }\pb\in {\bf B}.
	\end{align}
	
	Since $\mathfrak{g}_\zeta(\pb)$ is independent of $x$, by differentiating both sides of (\ref{cocycle:ybb}) with respect to $x$, we get $|\partial \pi_1\pb|\cdot|\partial \Psi \circ\pb|=|\partial \Psi|$ for any $\pb$. As $\|\partial \pi_1\pb\|_0$ can be arbitrarily small, we get $\partial\Psi\equiv0$, i.e., $\Psi$ is a function only on $\cha{\Gamma}$. This implies $\Psi=\exp[i \beta]$
	for a $\beta\in (\Rb/2\pi\Zb)^{\cha{\Gamma}}$. From Remark \ref{cob:cond2}, we conclude that $\zeta$ is a coboundary over $\Rb/2\pi\Zb$. 
	
	Conversely, we assume (\ref{cocycle:ybb}). Then it can be easily seen that $\Phi_{1,\bf 0}\Psi$ is an eigenfunction for $\Hc_{1,i\zeta}$. Hence, we finish the proof.
\end{proof}

\begin{rem}\label{sp:case:lv:1}
Let $\Gamma=\SL_2(\Zb)$. Then, $\psi:\SL_2(\Zb)\backslash\SL_2(\Zb)\rightarrow \Cb$ is just a variable $\psi=w$ and $\zeta=\tau\in \Rb$. Hence, $\zeta$ is a coboundary over $\Rb/2\pi \Zb$ if and only if $\tau$ is zero in $\Rb/2\pi\Zb$ if and only if $\tau$ is an integral multiple of $2\pi$. This is the result of \cite[Proposition 0]{bv} with $L=1$.
\end{rem}

\section{Proof of Proposition \ref{main:dynamics}}\label{pf:prop:2.2}

Combining all the previous results, we prove Proposition \ref{main:dynamics}. The proofs are quite similar to ones of Lemma 8 and 9 in Baladi-Vall\'ee \cite{bv}. For $\wbb\in\Cb^{d}$, set
$$\Hc_{s,\wbb}:=\Hc_{s,\wbb\cdot{\bm \psi}}\mbox{ and }\lambda(s,\wbb):=\lambda_{s,\wbb\cdot{\bm \psi}}.$$
For each statement, we specify regions $W_1$, $W_2$, and $W_3$ around $\bf 0$ and in the end take the intersection to get the desired $W$.

\subsection{Statement (1)} We first prove the case ${\bf v}={\bf 0}$. By Theorem \ref{keyrelation}, it is enough to discuss the behavior of $(\mathcal{I}-\Hc_{s,\wbb})^{-1}$ since all the interval and auxiliary operators are analytic by Proposition \ref{prop:DJ:conv}; and so are the final operators. To obtain the statements (a) and (b), following the proof of Baladi-Vall\'ee \cite[Lemma 8]{bv}, we split the region into three pieces I, II, and III according to $t=\Im s$.

\emph{(\rom{1}) When $|t|$ is small:}
Let $|\wbb_0|=1$ be fixed. As discussed in Baladi-Vall\'{e}e \cite{bv}, Kato \cite{kato}, and Sarig \cite{sarig}, when $(s,w)$ is subjected to a small perturbation near $(1,0)\in \Cb^2$, one can show that the operators $\Hc_{s,w\wbb_0}$, (\ref{DJ:s:w}), (\ref{DJhat:s:w}), and (\ref{DJtilde:s:w}) with $\psi=w\wbb_0\cdot {\bm \psi}$ are all analytic. Furthermore, the properties of spectral gap, uniqueness, and simplicity of the eigenvalue in Proposition \ref{op:dominant} and \ref{ruelle:perron} are extended to a complex parameter family $\Hc_{s,w\wbb_0}$. As $\wbb_0$ is arbitrary, a standard argument in the theory of several complex variables ensures that all the operators mentioned above are analytic for the general variable $\wbb\in\Cb^{d}$ instead of $w\wbb_0$. Similarly we also have:

\begin{prop} \label{perturb:analyticity}
	There exists a complex neighborhood $U$ of $(1, \bf 0)$ such that for all $(s,\wbb) \in U$, the operator $\Hc_{s,\psi}$ has a spectral gap with the decomposition $\Hc_{s,\wbb}=\lambda_{s,\wbb} \mathcal{P}_{s,\wbb}+\mathcal{N}_{s,\wbb}$, where $\lambda_{s,\wbb}$, $\mathcal{P}_{s,\wbb}$, $\mathcal{N}_{s,\wbb}$ are analytic on $U$ and $R(\mathcal{N}_{s,\wbb})<|\lambda_{s,\wbb}|$.
	Further, the corresponding eigenfunction $\Phi_{s,\wbb}$ and its derivative $\partial \Phi_{s,\wbb}$ are well-defined and analytic on $U$.
\end{prop}

Note that $\frac{\partial}{\partial s}\lambda_{s, \bf 0}\Big|_{s=1}\neq 0$ by Proposition \ref{derv:lambda:s:0}. 
By the implicit function theorem, we have an analytic map $s$ from the neighborhood $W_1$ of $\bf 0$ to $\Cb$ such that for some $\delta_1>0$ and $t_0>0$,  $\lambda({s(\wbb),\wbb})=1$ with $|\Re s(\wbb)-1|\leq\delta_1$ and $|\Im s(\wbb)|< t_0$ for all $\wbb\in W_1$. Obviously, $s({\bf 0})=1$.


\emph{(\rom{2}) When $t_0\leq |t|\leq \frac{1}{\rho^2}$:} In a similar way as Baladi-Vall\'ee \cite[Lemma 8]{bv}, one can conclude with the help of Proposition \ref{op:far0} that there exists $\delta_2>0$ and a neighborhood $W_2$ of $\bf 0$ such that  the distance between $1$ and the spectrum of $\Hc_{s,\wbb}$ is positive on region $|\Re s-1|\leq \delta_2$ for all $\wbb\in W_2$. Hence, $(\mathcal{I}-\Hc_{s,\wbb})^{-1}$ is analytic and bounded on the region.

\emph{(\rom{3}) When $|t|\geq \frac{1}{\rho^2}$:} Using Theorem \ref{bound:dolgopyat}, we can find $\delta_3>0$ and a neighborhood $W_3$ of $\bf 0$ such that $(\mathcal{I}-\Hc_{s,\wbb})^{-1}$ is analytic on the region $|\Re s-1|\leq \delta_3$ with $|\Im s|\geq \frac{1}{\rho^2}$ for all $\wbb\in W_3$.

Now take $\alpha_1$ as the minimum of $\delta_1$, $\delta_2$, and $\delta_3$. For any $0<\widehat{\alpha}_1<\alpha_1$, choose a neighborhood $W$ of $\bf 0$ small enough so that $W\subseteq W_1\cap W_2\cap W_3$ and $\Re s(\wbb)>1-(\alpha_1-\widehat{\alpha}_1)$.

To obtain the statement (c),  we fix $\wbb_0\neq \bf 0$ and set $s(w):=s(w\wbb_0)$. 
Let us set $\psi_0=\wbb_0\cdot {\bm \psi}$. We also set
$\Hc_{s,w}:=\Hc_{s,w\psi_0}$ and $\lambda(s,w):=\lambda_{s,w\psi_0}$ for $w\in\Cb$. 

Since $\lambda(s(w),w)=1$ for a small $|w|$, we have
\begin{align}\label{ch:rule:s:0}
s'(0)=-\frac{\partial\lambda}{\partial w}(1,0)\Big\slash \frac{\partial \lambda}{\partial s}(1,0)
\end{align}
We also note that
\begin{align}\label{twice:diff:lambda}
\frac{\partial \lambda}{\partial s}(1,0)s''(0)=\frac{d^2}{dw^2}\lambda(1+s'(0)w,w)\Big|_{w=0}.
\end{align}
From Proposition \ref{derv:lambda:s:0} and \ref{var:h:cohomo}, we obtain that $\psi_0$ is not a $\mathfrak{g}$-coboundary over $\Rb$ if and only if $s''(0)\neq 0$. Since $\wbb_0$ is arbitrary, we can conclude that the Hessian of $s(\wbb)$ at $\wbb=\bf 0$ is non-singular if and only if $\psi_i$ are linearly independent over $\Rb$ modulo $\Bc_\mathfrak{g}(\Gamma,\Rb)$.

Let us consider the statement (d). Let $\mathcal{R}_{\wbb}$ be the residue operator of the quasi-inverse at $s=s(\wbb)$. Since $\mathcal{R}_{\wbb}$ is the residue operator of $(1-\lambda_{s,\wbb})^{-1}\mathcal{P}_{s,\wbb}$ at $s=s(\wbb)$, by Theorem \ref{keyrelation}, the residue of the Dirichlet series is
\begin{align}\label{res:ev:temp}
\mathcal{E}_{s,\wbb}^J\mathcal{R}_{\wbb}\mathcal{G}_{s,\wbb} \Psi(0,\Gamma)=-\frac{\mathcal{E}_{s,\wbb}^J\Phi_{s,\wbb}(0,\Gamma)}{\frac{\partial}{\partial s}\lambda_{s,\wbb}}\left(\int_{{{X}}}\mathcal{G}_{s,\wbb}\Psi d\mu_{s,\wbb}\right)
\end{align}
where $\mathcal{E}_{s,\wbb}^J:=\widehat{D}_{s,\wbb}^J$ or $\widetilde{D}_{s,\wbb}^J$ and $\mathcal{G}_{s,\wbb}=\widehat{\mathcal{F}}_{s,\wbb}$ or $\widetilde{\mathcal{F}}_{s,\wbb}$ according to $\mathfrak{g}=\bG$ or $\cG$, respectively.
Let $\wbb=\bf 0$. For the evaluation of an integration in (\ref{res:ev:temp}) at $\wbb={\bf 0}$, observe first that we have $\int_{{{X}}}\Psi \Phi_{1,\bf 0}^{-1}d\mu_{1,\bf 0}=\int_{{{X}}}\Psi\,dm$ since $\mu_{1,{\bf 0}}=dm$. For $\mathfrak{g}=\cG$, note also that $\int_X \widetilde{\mathcal{F}}_{1,\bf 0}\Psi d\mu_{1,\bf 0}= 2\int_X \widehat{\mathcal{F}}_{1,\bf 0}\Psi d\mu_{1,\bf 0}$. Hence the integration in (\ref{res:ev:temp}) equals 
$$\frac{1}{2\rho\log2}\sum_{m\geq 2}\int_{{{X}}}\frac{1}{(m+x)^2}\Psi\left(\frac{1}{m+x},v\right)dxdv=\frac{1}{2\rho\log 2}\int_{(0,\frac{1}{2})\times\Gamma\backslash\SL_2(\Zb)}\Psi\, dm.$$ 
Note also that for an open fundamental interval $K=K^\circ(m_1,\cdots,m_\ell)$, we obtain
$${\mathcal{E}}_{1,{\bf 0}}^K\Phi_{1,\bf 0}(0,\Gamma)=\frac{[0;1,m_\ell,\cdots,m_1]}{Q([0;m_1,\cdots,m_\ell])^2\log 2 }=\frac{|K|}{\log 2}.$$
Hence we also obtain the same expression for an interval $J$. In total, we obtain the desired expression for the residue. 

Now consider a general $\bf v$. Let $\wbb$ be written as $\wbb={\bf u}+i{\bf v}$ for ${\bf u}\in W$, a neighborhood of ${\bf 0}$. With the observation in Remark \ref{cob:cond2}, one can easily show that $\Hc_{s,\wbb}\Psi=e^{i\beta}\Hc_{s,{\bf u}}[e^{-i\beta}\Psi]$ for all $\Psi$ and same expressions for the other operators $\mathcal{E}^J_{s,\wbb}$ and $\mathcal{G}_{s,\wbb}$. Therefore, we get $L_{\Psi,J}(s,\wbb)=e^{i\beta(\Gamma)}L_{e^{-i\beta}\Psi,J}(s,{\bf u})$. Hence, all the necessary properties of $L_{\Psi,J}(s,\wbb)$ follow from ones of $L_{e^{-i\beta}\Psi,J}(s,{\bf u})$. This concludes the proof of statement (1).

\subsection{Statement (2)} For a given ${\bf v}\neq {\bf 0}$, choose a neighborhood $W_1$ of $i{\bf v}$ small enough so that $\wbb\cdot{\bm \psi}$ is not a $\mathfrak{g}$-coboundary over $\Rb/2\pi\Zb$ for all $\wbb\in W_1$. Then, using Proposition \ref{op:far0}, the proof goes exactly same as the proof for (a) and (b). 

\subsection{Statement (3)} We split the region into three pieces I, II, and III as before. In the region I, whether ${\bf v}\cdot {\bm\psi}\in \Bc_{\mathfrak{g}}(\Gamma,\Rb)$ or not, i.e., the series is meromorphic or not,  $L_{\Psi,J}(s,\wbb)$ is bounded on $\Re s=1\pm \alpha_1$. On the region II, the series is bounded as it is analytic. On the region III, we now apply Dolgopyat--Baladi--Vall\'ee bound.

This finishes the proof of Proposition \ref{main:dynamics}. \qed

\end{document}